\def\XXint#1#2#3{{\setbox0=\hbox{$#1{#2#3}{\int}$}
    \vcenter{\hbox{$#2#3$}}\kern-.5\wd0}}
\def\({\left(}
\def\){\right)}
\def\N{{\mathbb{N}}}
\def\RR{{\mathbb{R}}}
\def\CC{{\mathbb{C}}}
\newcommand{\be}{\begin{equation}}
\newcommand{\ee}{\end{equation}}
\newcommand{\bea}{\begin{eqnarray}}
\newcommand{\eea}{\end{eqnarray}}
\newcommand{\beann}{\begin{eqnarray*}}
\newcommand{\eeann}{\end{eqnarray*}}
\def\vp{\varphi}
\def\mH{\mathcal{H}}
\newcommand{\abs}[1]{\left\vert{#1}\right\vert}
\newcommand{\norm}[1]{\left\Vert{#1}\right\Vert}
 \newtheorem{Theorem}{Theorem}
\newtheorem{remark}{Remark}[section]
\newtheorem{Lemma}{Lemma}[section]
\renewcommand{\theTheorem}{\arabic{Theorem}}
\renewcommand{\theLemma}{\arabic{section}.\arabic{Lemma}}
\newcommand{\bn}{\begin{eqnarray*}}       
\newcommand{\en}{\end{eqnarray*}}
\newcommand{\beq}{\begin{equation}}
\newcommand{\eeq}{\end{equation}}
\newcommand{\Z}{\mathbb{Z}}
\newcommand{\R}{\mathbb{R}}
\numberwithin{equation}{section}
\begin{document}

\title{\bf Orbital Stability of Domain Walls \\ in Coupled Gross-Pitaevskii Systems}

\author{Andres Contreras$^1$, Dmitry E. Pelinovsky$^2$, and Michael Plum$^{3}$ \\
{\small $^{1}$ Department of Mathematical Sciences, New Mexico State University, Las Cruces,
New Mexico, USA }\\
{\small $^{2}$ Department of Mathematics and Statistics, McMaster
University, Hamilton, Ontario, Canada, L8S 4K1} \\
{\small $^{3}$ Institut f\"{u}r Analysis, Karlsruher Institut f\"{u}r Technologie, Karlsruhe, Germany, 76131 } }

\date{\today}
\maketitle

\begin{abstract}
Domain walls are minimizers of energy for coupled one-dimensional Gross--Pitaevskii systems
with nontrivial boundary conditions at infinity. It has been shown in \cite{ABCP} that
these solutions are orbitally stable in the space of complex $\dot{H}^1$
functions with the same limits at infinity. In the present work we adopt a new weighted $H^1$ space
to control perturbations of the domain walls and thus to obtain an improved orbital stability result.
A major difficulty arises from the degeneracy of linearized operators at the domain walls
and the lack of coercivity.
\end{abstract}

\section{Introduction}

{\em Domain walls} are heteroclinic connections for coupled two-component systems, for which the first component
connects zero and nonzero equilibria in the spatial domains, where the second one connects
the nonzero and zero equilibria respectively. Domain walls occur in many physical experiments, e.g.
in the convection patterns \cite{Malomed1,Malomed2},
nonlinear optics \cite{Sheppard1,Sheppard2}, two mixed Bose--Einstein condensates \cite{Barankov,Dror,Band},
and recently in immiscible binary Bose gases \cite{Fil1,Fil2}.
The existence and uniqueness of domain walls in the limits of strong and weak segregation was explored by means of rigorous asymptotic analysis
in \cite{AS,BLWZ,Merlet,Sourdis}. The existence, spectral and nonlinear orbital stability of domain walls was obtained from
a variational technique in \cite{ABCP} (see \cite{Berg} for earlier results).

In this work we are interested in obtaining a strengthened stability property of domain wall solutions.
To simplify our presentation, we consider the system of coupled cubic Gross-Pitaevskii (GP) equations
written in the form
\be\label{GP}
\left\{
\begin{matrix}
i\partial_t \psi_1=-\partial_x^2 \psi_1 + (\abs{\psi_1}^2+\gamma\abs{\psi_2}^2)\psi_1,\\
i\partial_t \psi_2=-\partial_x^2 \psi_2 + (\gamma\abs{\psi_1}^2+\abs{\psi_2}^2)\psi_2,
\end{matrix}
\right.
\ee
where $\gamma > 1$ is the coupling parameter. The system (\ref{GP}) is a particular case (but the most important one)
of the coupled GP systems, for which the results of \cite{ABCP} apply. Domain walls are special solutions
to the system (\ref{GP}) given by $\psi_{1,2}(t,x) := e^{-it} u_{1,2}(x)$, where the stationary profiles $u_{1,2}$
solve the following system of differential equations
\be\label{Evarphi}
\left\{
\begin{matrix}
-u_1''(x)+(|u_1|^2+\gamma |u_2|^2-1)u_1=0,\\
-u_2''(x)+(\gamma |u_1|^2+ |u_2|^2-1)u_2=0,
\end{matrix}
\right.
\ee
subject to the following boundary conditions at infinity
\begin{eqnarray}
\label{bc-ode}
\left\{ \begin{array}{l}
u_1(x)\to 0,\quad u_2(x)\to 1,\quad \mbox{ as }x\to -\infty,\\
u_1(x)\to 1,\quad u_2(x)\to 0,\quad \mbox{ as }x\to +\infty. \end{array} \right.
\end{eqnarray}

Existence of domain walls  for $\gamma>1$ has been shown in \cite{ABCP} by minimizing
the energy functional
\begin{equation}
\label{energy}
E(\Psi) := \int_{-\infty}^\infty \left(|\partial_x \psi_1|^2+|\partial_x \psi_2|^2 + \frac{1}{2}\left(|\psi_1|^2+|\psi_2|^2-1\right)^2 +(\gamma-1)|\psi_1|^2|\psi_2|^2\right) dx,
\end{equation}
in the class of functions in the energy space
\begin{equation}
\label{energy-space}
\mathcal{D} = \left\{ \Psi\in H_{loc}^1(\RR): \quad (|\psi_1(x)|,|\psi_2(x)|) \to e_{\pm} \quad {\rm as} \quad x \to \pm \infty \right\},
\end{equation}
where $e_+ = (1,0)$ and $e_- = (0,1)$. The energy space is equipped with the family of distances parameterized by $A > 0$:
\begin{equation}\label{distance}
\rho_A(\Psi,\Phi):= \sum_{j=1,2} \left[ \bigl\| \partial_x \psi_j - \partial_x \varphi_j \bigr\|_{L^2(\RR)}
  + \bigl\| |\psi_j|-|\varphi_j| \bigr\|_{L^2(\RR)} +   \bigl\| \psi_j -\varphi_j\bigr\|_{L^\infty(-A,A)}\right].
\end{equation}
By Theorems 2.1, 2.4, and 3.1 in \cite{ABCP}, the minimizers of energy (\ref{energy}) are
given by real solutions $U = (u_1, u_2) \in \mathcal{D}$ to the system \eqref{Evarphi}
up to the gauge translation. The profiles of the domain walls $u_1,u_2 \in \mathbb{R}$
satisfy the following properties:
\begin{itemize}
\item[(a)] $u_1 (x)=u_2 (-x)$ for all $x\in \mathbb{R}.$
\item[(b)] $u_1^2(x) + u_2^2(x) \leq 1$ for all $x\in \mathbb{R}.$
\item[(c)] $u_1'(x) > 0$ and $u_2'(x) < 0$ for all $x\in \mathbb{R}.$
\item[(d)] There are positive constants $C_-$ and $C_+$ such that
\begin{equation}
\label{exp-decay}
\left\{ \begin{array}{lr}
C_- e^{\sqrt{\gamma-1}x} \leq u_1 (x) \leq C_+ e^{\sqrt{\gamma-1}x}, \quad & x \leq 0, \\
C_- e^{-\sqrt{2} x} \leq 1 - u_1(x) \leq C_+ e^{-\sqrt{2} x}, \quad & x \geq 0. \end{array} \right.
\end{equation}
\end{itemize}
By Theorem 1.3 in \cite{AS}, the real minimizers of $E$ satisfying properties (a)--(c)
were shown to be the unique real solutions to the system (\ref{Evarphi}).

By the global well-posedness results in the energy space $\mathcal{D}$ in
\cite{Zhid}, for any $\Psi_0 \in \mathcal{D}\cap L^{\infty}(\RR)$,
there exists a unique global in time solution $\Psi(t) \in \mathcal{D}\cap L^{\infty}(\RR)$ to the coupled
GP system \eqref{GP} with initial data $\Psi(0)=\Psi_0$.
Moreover, the map $t \to \Psi(t)$ is continuous with respect to $\rho_A$ and the
energy of the coupled GP system \eqref{GP} is preserved along the flow, that is
$$
E(\Psi(t))=E(\Psi_0) \quad \mbox{\rm for all} \;\; t \in \mathbb{R}.
$$

Finally, by Theorems 1.4 and 1.5 in \cite{ABCP}, the following nonlinear orbital stability
theorem was established for the domain walls of the coupled GP system (\ref{GP}).

\begin{Theorem}[\cite{ABCP}]
Let $\Psi_0\in\mathcal{D}\cap L^\infty(\RR)$. There exists $A_0>0$ such that for any $A > A_0$
and for every $\varepsilon>0$, there exist a positive number $\delta>0$ and real functions
$\alpha(t), \theta_1(t), \theta_2(t)$ such that if
\[\rho_A(\Psi_0,U)\leq\delta,\]
then
\[\sup_{t\in\RR}\rho_A(\Psi(t),U_{\alpha(t),\theta_1(t),\theta_2(t)}) \leq \varepsilon,
\]
where $U_{\alpha(t),\theta_1(t),\theta_2(t)} = (e^{-i\theta_1(t)} u_1(\cdot - \alpha(t)), e^{-i\theta_2(t)} u_2(\cdot -\alpha(t)))$
is an orbit of domain walls. Moreover, there exists a positive constant $C$ such that for all $t\in\mathbb{R}$:
$$
\vert\alpha(t)\vert\leq C\varepsilon\max\{1,\vert t\vert\},
$$
provided $\varepsilon$ is sufficiently small. \label{Theorem-old}
\end{Theorem}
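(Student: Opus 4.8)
The plan is to deduce orbital stability from the variational characterization of $U$: since $U$ minimizes the conserved energy $E$ over $\mathcal{D}$, uniquely up to the three--parameter symmetry group generated by translations and by independent phase rotations of the two components (under all of which $E$ is invariant), it suffices to combine conservation of $E$ along the $\rho_A$--continuous flow of \cite{Zhid} with a compactness--modulo--symmetries statement for near--minimizing sequences. The key step is the lemma: if $(\Psi_n)\subset\mathcal{D}$ satisfies $E(\Psi_n)\to m:=\min_{\mathcal{D}}E=E(U)$, then there exist $\alpha_n\in\RR$ and $\theta_{1,n},\theta_{2,n}\in\RR$ such that, along a subsequence, $\rho_A(\Psi_n,U_{\alpha_n,\theta_{1,n},\theta_{2,n}})\to 0$. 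To prove it I would write $\psi_{j,n}=u_j+v_{j,n}$, so that $v_{j,n}$ has zero limits at $\pm\infty$, note that the energy bound controls $\partial_x\psi_{j,n}$ in $L^2(\RR)$ and $\bigl(|\psi_{1,n}|^2+|\psi_{2,n}|^2-1\bigr)$ and $|\psi_{1,n}|\,|\psi_{2,n}|$ in $L^2(\RR)$ and makes $|\psi_{j,n}|$ uniformly $1/2$--H\"older, and then run a concentration--compactness argument on the energy densities: vanishing is impossible because the profile must transition from $e_-$ to $e_+$, which forces a unit--length interval on which $|\psi_{1,n}|\,|\psi_{2,n}|\ge c>0$ and hence at least $(\gamma-1)c$ of energy there; dichotomy is impossible because a split profile cannot carry out a single transition at cost $m$. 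So after a translation $\alpha_n$ and a choice of phases $\theta_{j,n}$ one extracts a limit $\Psi_\infty\in\mathcal{D}$, which by weak lower semicontinuity of $E$ is a minimizer, hence $\Psi_\infty=U_{\beta,\sigma_1,\sigma_2}$ by Theorems 2.1, 2.4, 3.1 of \cite{ABCP} and the uniqueness Theorem 1.3 of \cite{AS}; finally $E(\Psi_n)\to E(\Psi_\infty)$ forces convergence of each term of $E$, which together with compactness of $H^1(-A,A)\hookrightarrow C([-A,A])$ upgrades the convergence to $\rho_A$.

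Granting the lemma, the first assertion follows by contradiction: if it failed there would be $\varepsilon_0>0$ and data $\Psi_{0,n}\in\mathcal{D}\cap L^\infty(\RR)$ with $\rho_A(\Psi_{0,n},U)\to 0$ whose solutions satisfy $\inf_{\alpha,\theta_1,\theta_2}\rho_A(\Psi_n(t_n),U_{\alpha,\theta_1,\theta_2})\ge\varepsilon_0$ at some times $t_n$; but then, since $E$ is $\rho_A$--continuous near $U$ and conserved, $E(\Psi_n(t_n))=E(\Psi_{0,n})\to E(U)=m$, so the lemma produces parameters with $\rho_A(\Psi_n(t_n),U_{\alpha_n,\theta_{1,n},\theta_{2,n}})\to 0$, a contradiction. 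Hence for every $\varepsilon>0$ there is $\delta>0$, depending on $\varepsilon$ and $A$, so that $\rho_A(\Psi_0,U)\le\delta$ implies $\sup_t\inf_{\alpha,\theta_1,\theta_2}\rho_A(\Psi(t),U_{\alpha,\theta_1,\theta_2})\le\varepsilon$.

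It remains to choose the parameters continuously and to bound the drift of $\alpha$. Whenever $\Psi(t)$ lies in a small $\rho_A$--tube around the orbit, I would use the implicit function theorem to solve three orthogonality conditions of the form $\langle\,e^{i\theta_j}\psi_j(t,\cdot+\alpha)-u_j,\chi_j\,\rangle=0$, with $\chi_j$ fixed and supported in $(-A,A)$ so that the $3\times 3$ Jacobian at $U$ (built from $u_1$, $u_2$, $u_1'$, using the symmetry $u_1(x)=u_2(-x)$) is invertible; this yields $C^1$ functions $\alpha(t),\theta_1(t),\theta_2(t)$, and the tube is never left thanks to the estimate just proved. Differentiating the orthogonality relations and inserting \eqref{GP}, the equation for the translational mode has no $O(1)$ forcing term, because the real domain wall is a critical point and therefore carries zero momentum; hence $|\dot\alpha(t)|\le C\,\rho_A\bigl(\Psi(t),U_{\alpha(t),\theta_1(t),\theta_2(t)}\bigr)\le C\varepsilon$. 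Integrating and using $|\alpha(0)|\le C\,\rho_A(\Psi_0,U)\le C\delta\le C\varepsilon$ gives $|\alpha(t)|\le C\varepsilon\max\{1,|t|\}$.

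The main obstacle is the compactness lemma, and the difficulty is precisely the degeneracy named in the abstract. The Hessian of $E$ at $U$ is not coercive modulo the symmetry group: the operator governing imaginary perturbations of the first component, $-\partial_x^2+(u_1^2+\gamma u_2^2-1)$, has $0$ at the bottom of its essential spectrum because $u_1^2+\gamma u_2^2-1\to 0$ as $x\to+\infty$, with $u_1>0$ a threshold resonance (it solves the equation but is not in $L^2$); so there is no spectral gap, and the classical Grillakis--Shatah--Strauss coercivity, which would give stability by a direct expansion near $U$, is unavailable. The argument must therefore be \emph{global}: it rests on the minimality of $U$ and on excluding the loss of compactness (escape of energy to $x=\pm\infty$, and dichotomy) for near--minimizing sequences, where the precise choice of $\rho_A$—global $L^2$ control of $\partial_x\psi_j$ and of $|\psi_j|-|\varphi_j|$ but only interval $L^\infty$ control—is exactly what makes both the compactness and the final upgrade to $\rho_A$--convergence work.
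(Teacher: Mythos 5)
First, a point of comparison that matters: this paper does not prove Theorem \ref{Theorem-old} at all --- it is quoted from \cite{ABCP}, and the present article only describes its strategy (an energy--expansion argument in the metric $\rho_A$, inspired by \cite{BGSS}, in which the perturbation is decomposed into real and imaginary parts and analyzed separately inside and outside the window $(-A,A)$, with the translation parameter $\alpha$ fixed by a single orthogonality condition against $\partial_x U$). Your route is genuinely different: you propose the Cazenave--Lions scheme, deducing stability from conservation of $E$ plus a precompactness-modulo-symmetries statement for minimizing sequences in $\rho_A$. This is a legitimate alternative in principle, and it correctly identifies why a naive Grillakis--Shatah--Strauss expansion fails (the threshold resonances $u_1,u_2$ of $L_-$). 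What the coercivity route buys, and what your route gives up, is quantitative information: note that the paper's Remark after Theorem \ref{Theorem-old} states explicitly that in \cite{ABCP} the phases $\theta_1,\theta_2$ are \emph{not} determined by orthogonality conditions and are \emph{not} controlled in time --- only $\alpha$ is --- which is precisely the deficiency that Theorem \ref{Theorem-main} repairs by passing to the weighted space $\mathcal{H}$. Your sketch quietly claims to fix all three parameters by an implicit function theorem in $\rho_A$ and to make them $C^1$; that is more than Theorem \ref{Theorem-old} asserts and is not free (it is essentially the content of Lemmas \ref{Lemma-xith} and \ref{difflem} of this paper, proved there in $\mathcal{H}$, not in $\rho_A$).

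The genuine gaps are the two load-bearing steps. (i) The compactness lemma \emph{is} the theorem, and your treatment of it is a list of headings. Excluding dichotomy requires a strict subadditivity inequality for the splitting of a heteroclinic profile into a transition piece plus residual bumps near the constant states $e_\pm$, and --- harder --- the final upgrade from subsequential local convergence to convergence in $\rho_A$ requires \emph{global} $L^2(\RR)$ convergence of $\partial_x\psi_{j,n}$ and of $|\psi_{j,n}|-u_j(\cdot-\alpha_n)$. The latter does not follow from ``convergence of each term of $E$'' alone: the energy controls $|\psi_1|^2+|\psi_2|^2-1$ and $|\psi_1|\,|\psi_2|$ in $L^2$, and disentangling these into separate $L^2$ control of $|\psi_1|-u_1$ and $|\psi_2|-u_2$ near each infinity (where one modulus is near $1$ and the other near $0$) needs an explicit algebraic argument, together with a Brezis--Lieb/Radon--Riesz step to convert norm convergence plus weak convergence into strong convergence of the gradients. (ii) The modulation step: the assertion ``the translational mode has no $O(1)$ forcing because $U$ carries zero momentum'' is a heuristic, not an estimate; what is actually needed (cf.\ the system \eqref{Bsys} in Lemma \ref{difflem}) is that the linear forcing $\langle L_-W,\cdot\rangle$ is $O(\varepsilon)$ and that the quadratic and cubic remainders are $O(\varepsilon^2)$ in a topology in which $W$ is only known to be $\varepsilon$-small --- and in the $\rho_A$ framework $W$ is \emph{not} small in $L^2(\RR)$, only its modulus combination and its derivative are, so these remainder estimates are exactly where the lack of coercivity bites. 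Finally, differentiating the orthogonality relations in time requires justifying that $t\mapsto\Psi(t)$ is $C^1$ into $H^{-1}$ and that the pairings with compactly supported weights are differentiable; this is routine but must be said, since the flow of \cite{Zhid} is a priori only continuous in $\rho_A$.
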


\begin{remark}
In Theorem \ref{Theorem-old}, modulation parameters $\theta_1$, $\theta_2$ for complex phases of $\psi_1$, $\psi_2$ are
not determined and are not controlled in the time evolution.
\end{remark}

The choice of the metric $\rho_A$ in (\ref{distance}) and the proof of Theorem \ref{Theorem-old}
were inspired by the similar results obtained for the nonlinear orbital stability
of black solitons in the cubic defocusing nonlinear Schr\"{o}dinger (NLS) equation in \cite{BGSS}.
On one hand, the domain walls are more complicated than black solitons because the gauge parameters
$\theta_1$ and $\theta_2$ have to be controlled separately from each other.
On the other hand, the domain walls are simpler than black solitons because the domain walls
are energy minimizers for the coupled GP system
whereas the black solitons are constrained energy minimizers for the NLS equation
under the constraint on the conserved renormalized momentum \cite{BGSS}.
Nevertheless, in both models, the principal difficulty in obtaining the nonlinear
orbital stability of black solitons or domain walls is the lack of coercivity of the energy
functional with respect to the imaginary parts of the perturbations.

Since the time of \cite{BGSS} and \cite{ABCP}, several important results have appeared
in the context of stability of the black solitons in the NLS equation. A new metric
has been introduced in \cite{OrbBlack} to obtain coercivity of the energy functional in the weighted $H^1$ space.
The new metric was introduced uniformly on the real line, so that the compact support
controlled by the parameter $A > 0$
in the family of distances $\rho_A$ in (\ref{distance}) becomes abundant. Once the coercivity of the energy
is obtained in the weighted $H^1$ space, nonlinear orbital stability and the asymptotic stability
of black solitons can be established by available analytical techniques in \cite{OrbBlack}.

The new variables introduced in \cite{OrbBlack} were further used in analysis of nonlinear orbital stability
of black solitons in the $H^2$ space by using a higher-order energy of the cubic NLS equation \cite{GPII}.
To tackle with the lack of coercivity, the family of distances given by (\ref{distance}) was still used and
analysis was developed separately inside and outside the compact support.
However, in the $H^2$ space, the black solitons become minimizers of the higher-order energy
and therefore, the constrained renormalized momentum is no longer needed to be used.

For completeness, we also mention other works on orbital and asymptotic stability of black solitons
in the cubic NLS equation, where more special studies are developed based on the inverse scattering transform method
\cite{Cuccagna,PZ}. However, this method is not applicable for the coupled GP system (\ref{GP}) unless
$\gamma = 1$, in which case no domain wall solutions exist.

The purpose of this work is to obtain improved nonlinear orbital stability results for the domain walls
of the coupled GP system compared to Theorem \ref{Theorem-old}. In this study, we incorporate the new weighted $H^1$ space
to control imaginary parts of the perturbations to the domain walls and to obtain the coercivity
of the energy functional uniformly on the real line. Due to nonlinear terms of the energy functional,
we are unable to control evolution of the real parts of the perturbations neither in the weighted $H^1$ space
nor in the standard $H^1$ space, in spite of the fact that the quadratic part of the energy functional is
coercive for the real parts in $H^1$. As a result, we have to introduce again the compact
support given by a parameter $R > 0$ and to control the real parts of the perturbations separately inside
and outside the compact support.

Following the approach of \cite{OrbBlack}, we introduce the new weighted $H^1$ space denoted by $\mH$,
according to the following inner product for $\Psi=(\psi_1,\psi_2)$ and $\Phi =(\vp_1,\vp_2)$:
\be
\label{weighted-H}
\langle \Psi, \Phi \rangle_{\mH}:= \sum_{j=1}^2 \int_\RR
\left[ \frac{d \psi_j}{dx} \frac{d \bar{\vp}_j}{dx} + (\gamma-1) (1- u_j^2) \psi_j \bar{\vp}_j \right] dx.
\ee
$\mathcal{H}$ is a Hilbert space and its squared induced norm is given by
\be
\label{norm-H}
\| \Psi \|_{\mathcal{H}}^2 := \langle \Psi, \Psi \rangle_\mH.
\ee
By property (b), the weight functions $1 - u_j^2$ are positive everywhere on the real line.
Also recall that $\gamma > 1$ so that the inner product does indeed yield a positive bilinear form.
Note that the Sobolev space $H^1(\RR)$ is continuously embedded into the weighted space $\mathcal{H}$ because there is a positive
constant $\mathcal{C}_H$ such that
\begin{equation}
\label{bound-H}
\| \Psi \|_{\mathcal{H}} \leq \mathcal{C}_H \| \Psi \|_{H^1}, \quad \mbox{\rm for every \;} \Psi \in H^1(\RR).
\end{equation}
Let us equip the space $\mathcal{H}$ with the family of distances parameterized by $R > 0$:
\begin{equation}\label{distance-R}
\rho_R(\Psi,\Phi):= \bigl\| \Psi - \Phi \bigr\|_{\mathcal{H}} + \sum_{j=1,2} \bigl\| |\psi_j|^2 - |\varphi_j|^2 \bigr\|_{L^2(|x| \geq R)}.
\end{equation}
The energy of perturbations to the domain walls turns out to be coercive in the metric $\rho_R$
for every $\gamma > 1$ and sufficiently large $R > 0$. The following theorem takes advantage of this coercivity
and gives an improved orbital stability result for the domain walls.

\begin{Theorem}
Let $\Psi_0 \in \mathcal{D}\cap L^\infty(\RR)$. There exists $R_0 > 0$
such that for any $R>R_0$ and for every $\varepsilon>0$, there exist a positive number $\delta>0$ and real functions
$\alpha(t), \theta_1(t), \theta_2(t)$ such that if
\begin{equation}
\label{bound-initial}
\rho_R(\Psi_0,U)\leq \delta,
\end{equation}
then
\begin{equation}
\label{bound-final}
\sup_{t\in\RR}\rho_R(\Psi(t),U_{\alpha(t),\theta_1(t),\theta_2(t)}) \leq \varepsilon,
\end{equation}
where $U_{\alpha(t),\theta_1(t),\theta_2(t)} = (e^{-i\theta_1(t)} u_1(\cdot - \alpha(t)), e^{-i\theta_2(t)} u_2(\cdot -\alpha(t)))$
is an orbit of domain walls. Moreover, there exists a positive constant $C$ such that for all $t\in\mathbb{R}$:
\begin{equation}
\label{bound-time-per}
\vert\alpha(t)\vert + \vert \theta_1(t)\vert + \vert \theta_2(t) \vert\leq C\varepsilon\max\{1,\vert t\vert\},
\end{equation}
provided $\varepsilon$ is sufficiently small. \label{Theorem-main}
\end{Theorem}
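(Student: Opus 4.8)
The overall strategy follows the now-standard modulation-plus-coercivity scheme. First I would set up the decomposition: given a solution $\Psi(t)$ close to $U$ in the metric $\rho_R$, write $\Psi(t) = U_{\alpha,\theta_1,\theta_2} + W$, where the modulation parameters $(\alpha(t),\theta_1(t),\theta_2(t))$ are chosen by an implicit function theorem argument so that $W$ is orthogonal to the three infinitesimal generators of the symmetry group acting at the domain wall, namely $\partial_x U$ (translation) and the two gauge directions $(iu_1,0)$ and $(0,iu_2)$, the orthogonality being taken in a convenient fixed pairing (either the $L^2$ pairing or the $\mathcal{H}$ inner product, adjusted so the Gram matrix is invertible). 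The key point here, and the reason the statement is stronger than Theorem \ref{Theorem-old}, is that because the new weighted space $\mathcal{H}$ controls the imaginary parts, \emph{all three} modulation parameters can now be extracted and controlled, not just $\alpha$; the bound \eqref{bound-time-per} on $|\theta_1|+|\theta_2|$ will then come from differentiating the orthogonality conditions in $t$ and using the equation, exactly as $|\alpha(t)|$ was handled in \cite{ABCP}.

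The heart of the matter is the coercivity estimate. Write $E(U+W) - E(U) = Q(W) + N(W)$, where $Q$ is the Hessian quadratic form and $N$ collects cubic and quartic remainders. Using the Euler--Lagrange equations \eqref{Evarphi}, one computes $Q(W)$ explicitly; splitting $W$ into real and imaginary parts $W = R + iI$ (componentwise), the form decouples into $Q(W) = Q_{\mathrm{re}}(R) + Q_{\mathrm{im}}(I)$. For the imaginary part, the linearized operator is, componentwise, $-\partial_x^2 + (|u_1|^2+\gamma|u_2|^2 - 1)$ and $-\partial_x^2 + (\gamma|u_1|^2+|u_2|^2-1)$ — precisely the operators whose potentials, via \eqref{Evarphi}, equal $u_j''/u_j$, so $u_j$ is the (positive, hence ground) zero-energy state; the Hardy/ground-state substitution $I_j = u_j \eta_j$ turns $Q_{\mathrm{im}}$ into $\sum_j \int u_j^2 |\partial_x \eta_j|^2\,dx$, which one then bounds below by $c\,\|I\|_{\mathcal{H}}^2$ using property (d) on the decay/growth of $u_j$ together with the explicit weight $(\gamma-1)(1-u_j^2)$ in \eqref{weighted-H} — this is where the particular choice of weight is engineered to make the inequality work for every $\gamma>1$. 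For the real part, $Q_{\mathrm{re}}$ is coercive in $H^1$ modulo the translation kernel $\partial_x U$, which is killed by the orthogonality condition on $R$; a standard spectral/compactness argument (as in \cite{ABCP}) gives $Q_{\mathrm{re}}(R) \geq c\|R\|_{H^1}^2 - C(\cdots)$ after removing the kernel. Combining, $Q(W) \geq c\|W\|_{\mathcal{H}}^2$ on the orthogonal complement, up to the real-part part which is only controlled in $H^1$.

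The main obstacle — flagged in the introduction — is that the nonlinear remainder $N(W)$ cannot be absorbed using only $\|W\|_{\mathcal{H}}$: the cubic and quartic terms involve $|\psi_j|^2$ and products thereof, and $\mathcal{H}$ does not control $L^2$ of the real parts at infinity (where $u_j$ or $1-u_j$ decays and the weight degenerates). The fix is to reintroduce the cutoff parameter $R$: estimate $N(W)$ separately on $|x| \le R$, where on a bounded interval $\mathcal{H}$-closeness plus the $L^\infty$ bound from global well-posedness gives enough control, and on $|x| \ge R$, where one uses the extra term $\sum_j \||\psi_j|^2 - |\varphi_j|^2\|_{L^2(|x|\ge R)}$ built into $\rho_R$ in \eqref{distance-R}. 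One derives a differential inequality for the modulated energy-type functional, controls the drift of $\alpha$, $\theta_1$, $\theta_2$ via the modulation equations, and runs a continuity (bootstrap) argument: choosing $R>R_0$ large and $\delta$ small, the quantity $\rho_R(\Psi(t),U_{\alpha(t),\theta_1(t),\theta_2(t)})$ stays below $\varepsilon$ for all $t$, with the linear-in-$t$ growth of the parameters as in \eqref{bound-time-per}. I expect the delicate bookkeeping to be (i) verifying the lower bound on $Q_{\mathrm{im}}$ in the $\mathcal{H}$-norm uniformly in $\gamma>1$ using property (d), and (ii) closing the estimate on $|x|\ge R$ for the quartic term $(\gamma-1)|\psi_1|^2|\psi_2|^2$, which is genuinely outside the reach of the $\mathcal{H}$-norm and forces the two-region analysis.
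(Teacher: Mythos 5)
There is a genuine gap in your treatment of the real part of the perturbation, and it is precisely the point on which the paper's argument turns. You propose to use the Hessian form $Q_{\mathrm{re}}(V)=(L_+V,V)_{L^2}$ globally and its $H^1$-coercivity \eqref{coercivity-L-plus} modulo the translation direction, relegating the two-region split to the nonlinear remainder $N$. But the perturbations admitted by the hypothesis $\rho_R(\Psi_0,U)\le\delta$ lie only in $\mathcal{H}$, whose weight $(\gamma-1)(1-u_j^2)$ in \eqref{weighted-H} degenerates exponentially at the infinity where $u_j\to 1$; consequently $v_1$ need not belong to $L^2(0,\infty)$ (nor $v_2$ to $L^2(-\infty,0)$), the quantity $(L_+V,V)_{L^2}$ can be infinite (its potential tends to the positive constant $2$ there), and the bound $Q_{\mathrm{re}}(V)\ge c\|V\|_{H^1}^2$ is vacuous. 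This is exactly the obstruction recorded in Remark \ref{remark-real}. The cure is not to split the remainder but to split the \emph{quadratic form itself}: the paper uses the exact algebraic identity of Lemma \ref{lem-decomposition} in the variables $\eta_j=|u_j+v_j+iw_j|^2-u_j^2$ (so that the ``remainder'' becomes $\tfrac12(M\Gamma,\Gamma)_{L^2}$, naturally controlled by the $L^2(|x|\ge R)$ piece of the metric \eqref{distance-R}), and then reassembles the real-part form as the interpolating operator $L_R$ of \eqref{R}, equal to $L_+$ on $[-R,R]$ and to $L_-$ outside. The essential new lemma is the coercivity of $L_R$ in the $\mathcal{H}$-norm under a single orthogonality condition (Lemma \ref{lem-coercivity-L-R}), proved by continuation of the eigenvalues of the generalized spectral problem as $R\to\infty$ (Theorem \ref{Theo1} in Appendix A). Nothing in your outline supplies this ingredient, and without it the real-part estimate does not close.

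A secondary, smaller issue: for the imaginary part you invoke the ground-state substitution $w_j=u_j\zeta_j$, which yields $\sum_j\int u_j^2|\partial_x\zeta_j|^2\,dx\ge 0$, but the asserted lower bound by $c\|W\|_{\mathcal{H}}^2$ does not follow from the substitution and property (d) alone: the kernel directions $U_1,U_2$ (constant $\zeta_j$) must be removed and one still needs a spectral gap above them in the weighted problem. The paper obtains this by recasting $L_-$ as $I-\gamma T$ with $T$ compact in $\mathcal{H}$ and excluding negative eigenvalues via Sturm comparison (Lemma \ref{lem-coercivity-L-minus}); your Hardy-type route is plausible but would need an additional Poincar\'e-type argument on the orthogonal complement to be complete. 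The modulation setup, the derivative bounds leading to \eqref{bound-time-per}, and the identification of the quartic coupling term outside $[-R,R]$ as the reason for the two-region analysis are all consistent with the paper.
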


\begin{remark}
In Theorem \ref{Theorem-main}, modulation parameters $\alpha$, $\theta_1$, and $\theta_2$ are
uniquely determined by the projections in space $\mathcal{H}$ and are controlled in the time evolution
of the modulation equations.
\end{remark}

\begin{remark}
The proof of Theorem \ref{Theorem-main} is self-contained and it follows the ideas of the proof
of orbital stability of black solitons in \cite{GPII}, which are minimizers of the higher-order
energy of the nonlinear Schr\"{o}dinger equation in the $H^2$ space.
\end{remark}

\begin{remark}
As far as we can see, the distances $\rho_A$ and $\rho_R$ are not comparable: one can find examples of functions for which $\rho_A$ is finite while $\rho_R$ diverges and vice versa.
\end{remark}

The rest of this article is organized as follows. In
Section 2, we rewrite the energy functional given by (\ref{energy}) in terms
of perturbations to the domain walls. In Section 3, we prove coercivity of the energy functional
in the weighted space $\mathcal{H}$, provided $R > 0$ is sufficiently large.
Energy estimates are developed in Section 4. Modulation equations are analyzed in Section 5.
The proof of Theorem \ref{Theorem-main} is concluded in Section 6.
Appendix A describes an important technical result on continuation of
eigenvalues of the linearized operator with respect to the parameter $R > 0$ in the limit
$R \to \infty$.

\vspace{0.2cm}

{\bf Acknowledgements:} The work of A.C. was partially supported by a grant from the Simons Foundation \# 426318.

\section{Decomposition of the energy}

Let $U = (u_1, u_2) \in \RR^2$ be the domain wall solutions
to the ODE system (\ref{Evarphi}) subject to the boundary conditions (\ref{bc-ode}).
By adding a perturbation to $U$ and separating the real and imaginary parts
as $\Psi = U + V +i W$, we verify that the quadratic part of the energy
functional given by (\ref{energy}) can be block-diagonalized as follows:
\begin{equation}
\label{energy-quadratic-part}
E(U + V + i W) - E(U) = \left( L_+ V, V \right)_{L^2} + \left( L_- W, W \right)_{L^2}
+ \mathcal{O}(\| V + i W \|_{H^1}^3),
\end{equation}
where $L_{\pm} : H^2(\RR) \to L^2(\RR)$ are the linear self-adjoint operators given by
\be
L_{+} = \begin{bmatrix}
-\partial^2_x+3u_1^2+\gamma u_2^2-1 & 2\gamma u_1 u_2\\
2\gamma u_1 u_2 & -\partial^2_x +\gamma u_1^2+3u_2^2-1
\end{bmatrix} \label{L-plus}
\ee
and
\be
L_{-} = \begin{bmatrix}
-\partial^2_x +u_1^2+\gamma u_2^2-1 & 0\\
0 & -\partial^2_x+\gamma u_1^2+u_2^2-1
\end{bmatrix}. \label{L-minus}
\ee
By Theorem 3.1 in \cite{ABCP}, the linear operators $L_{\pm}$ satisfy the following properties:
\begin{enumerate}
\item[(i)] Each operator $L_+$ and $L_-$ is positive semi-definite on $H^1(\RR)$.
\item[(ii)]  Zero is a simple eigenvalue of $L_+$, with associated eigenfunction $\partial_x U$ and
there exists $\Sigma_0 > 0$ with $\sigma_{ess}(L_{+})=[\Sigma_0,\infty)$.
\item[(iii)]  $\sigma_{ess}(L_{-})=[0,\infty)$, and $L_- U_1 = L_- U_2 = 0$ with $U_1 = (u_1,0)$ and $U_2 = (0,u_2)$.
\end{enumerate}

By property (ii), the quadratic form for the operator $L_+$ is coercive in $H^1(\RR)$
under a single constraint which fixes the spatial translation
of the domain wall solutions. In other words, there exists a positive constant $\mathcal{C}_0$
such that
\begin{equation}
\label{coercivity-L-plus}
\left( L_{+} V, V \right)_{L^2} \geq \mathcal{C}_0 \norm{V}^2_{H^1}\quad \mbox{\rm for every \;\;} V \in H^1(\RR) : \quad
(V,\partial_x U)_{L^2}=0.
\end{equation}
On the other hand, since the essential spectrum of $L_-$ touches zero with two bounded wave functions
$U_1$ and $U_2$, which are not in $L^2(\RR)$,
the quadratic form for the operator $L_-$ is not coercive in $H^1(\RR)$. The same problem arises for black solitons of the cubic NLS equation
and it is dealt with the choice of quadratic variables which are not only bounded but also in $L^2(\RR)$ \cite{OrbBlack}.
Following this approach, we introduce the quadratic variables:
\begin{equation}
\label{variables-eta}
\eta_j := |u_j + v_j + i w_j|^2 - u_j^2 = 2 u_j v_j + v_j^2 + w_j^2
\end{equation}
By the explicit computations, we show that the energy functional given by (\ref{energy}) can be represented
in variables $V := (v_1,v_2)$, $W := (w_1,w_2)$, and $\Gamma := (\eta_1,\eta_2)$ as a sum of three quadratic forms.

\begin{Lemma}
\label{lem-decomposition}
Assume $V, W \in H^1(\RR)$. Then,
\begin{equation}
\label{energy-decomposition}
E(U + V + i W) - E(U) = \left( L_{-} V, V \right)_{L^2} + \left( L_{-} W, W \right)_{L^2} + \frac{1}{2} \left( M \Gamma,\Gamma \right)_{L^2},
\end{equation}
where $L_-$ is given by (\ref{L-minus}) and
\[
M = \begin{bmatrix}
1 & \gamma\\
\gamma & 1
\end{bmatrix}.\]
\end{Lemma}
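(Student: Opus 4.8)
The identity (\ref{energy-decomposition}) is an \emph{exact} algebraic identity, with no remainder term, so the plan is simply to substitute $\psi_j = u_j + v_j + i w_j$ into the energy (\ref{energy}), expand, subtract $E(U)$, and reorganize. Introduce the shorthand $\rho_0 := u_1^2 + u_2^2 - 1$ and use (\ref{variables-eta}) in the form $|\psi_j|^2 = u_j^2 + \eta_j$. Then the three groups of terms in (\ref{energy}) expand as
\[
|\partial_x \psi_j|^2 - |\partial_x u_j|^2 = 2 u_j' v_j' + (v_j')^2 + (w_j')^2, \qquad
\tfrac12(|\psi_1|^2 + |\psi_2|^2 - 1)^2 - \tfrac12 \rho_0^2 = \rho_0(\eta_1 + \eta_2) + \tfrac12(\eta_1 + \eta_2)^2,
\]
and $(\gamma-1)(|\psi_1|^2|\psi_2|^2 - u_1^2 u_2^2) = (\gamma-1)(u_1^2 \eta_2 + u_2^2 \eta_1 + \eta_1 \eta_2)$.

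The next step is to regroup. The terms quadratic in $\Gamma = (\eta_1,\eta_2)$ are $\tfrac12(\eta_1 + \eta_2)^2 + (\gamma-1)\eta_1 \eta_2 = \tfrac12\eta_1^2 + \tfrac12\eta_2^2 + \gamma \eta_1 \eta_2$, which is exactly the integrand of $\tfrac12(M\Gamma,\Gamma)_{L^2}$. The terms linear in $\eta_j$ combine to $(u_1^2 + \gamma u_2^2 - 1)\eta_1 + (\gamma u_1^2 + u_2^2 - 1)\eta_2$ upon using $\rho_0 + (\gamma-1)u_2^2 = u_1^2 + \gamma u_2^2 - 1$ and $\rho_0 + (\gamma-1)u_1^2 = \gamma u_1^2 + u_2^2 - 1$; crucially, these coefficients are precisely the diagonal potentials of $L_-$ in (\ref{L-minus}). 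Finally split $\eta_j = 2u_j v_j + (v_j^2 + w_j^2)$: the $(v_j^2 + w_j^2)$ pieces combine with the $(v_j')^2 + (w_j')^2$ terms to give, after integrating by parts, $(L_- V, V)_{L^2} + (L_- W, W)_{L^2}$; while the $2u_j v_j$ pieces produce $\int_\RR 2\bigl[u_1(u_1^2 + \gamma u_2^2 - 1) v_1 + u_2(\gamma u_1^2 + u_2^2 - 1) v_2\bigr]\,dx$, which, after integrating the cross gradient terms by parts, $\int_\RR 2u_j' v_j'\,dx = -\int_\RR 2u_j'' v_j\,dx$, cancels identically against that integral by the Euler--Lagrange system (\ref{Evarphi}). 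What survives is exactly the right-hand side of (\ref{energy-decomposition}).

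I do not expect a genuine obstacle here; the only care needed is to check that every integral above converges separately, so that splitting the integral and integrating by parts are legitimate. This uses $V, W \in H^1(\RR) \subset L^2(\RR) \cap L^\infty(\RR)$, whence $\eta_j \in L^2(\RR)$; the bound $u_1^2 + u_2^2 \le 1$ from property (b); and the exponential decay of property (d), which together with the ODE gives $u_j'$, $u_j''$ and $\rho_0$ exponential decay, so that $u_j'' v_j \in L^1(\RR)$ and the boundary terms $u_j' v_j\big|_{-\infty}^{+\infty}$ vanish. Since $E$ is a polynomial of degree four in the fields and their derivatives, the expansion terminates and the identity is exact; the content of the lemma is really the two algebraic observations above --- that the coefficients of $\eta_1,\eta_2$ reproduce the $L_-$ potentials, and that the combined cross terms $2u_j' v_j'$ and $2u_j v_j \times(\text{potential})$ are annihilated by the domain-wall equations --- which is what allows the energy to collapse into the three clean quadratic forms of (\ref{energy-decomposition}).
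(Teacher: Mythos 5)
Your proof is correct and follows essentially the same route as the paper: direct substitution of $\Psi = U+V+iW$ into the energy, cancellation of the terms linear in $(V,W)$ via the Euler--Lagrange system (\ref{Evarphi}), and regrouping of the remaining terms into the three quadratic forms. The only difference is one of bookkeeping --- you carry out the cancellation of the linear terms explicitly and verify the integrability needed to split the integral, where the paper simply invokes criticality of $U$ --- and your intermediate expression agrees with the paper's.
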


\begin{proof}
By substituting the perturbation $V+iW$ to the domain wall solution $U$ in the component form, we obtain
\begin{eqnarray*}
E(U + V + i W) & = & \int_{\RR} \left[ \abs{u_1' + v_1' + i w_1'}^2+\abs{u_2' + v_2' + i w_2'}^2  \right. \\
& \phantom{t} & \left. + \frac{1}{2} \left(1-\abs{u_1 + v_1 + i w_1}^2-\abs{u_2 + v_2 + i w_2}^2\right)^2 \right. \\
& \phantom{t} & \left. + (\gamma - 1) \abs{u_1 + v_1 + i w_1}^2  \abs{u_2 + v_2 + i w_2}^2 \right] dx.
\end{eqnarray*}
Since the domain wall $U$ is a critical point of $U$, the linear terms in $V$ and $W$ are canceled after integration by parts.
By subtracting $E(U)$ from $E(U+V+iW)$, we rewrite the result in the explicit form:
\begin{eqnarray*}
& \phantom{t} & \int_{\RR}
\left[ (v_1')^2 + (w_1')^2 + (v_2')^2 + (w_2')^2 - (1-u_1^2-u_2^2)(v_1^2+w_1^2+v_2^2+w_2^2) \right. \\
& \phantom{t} & \left. + \frac{1}{2} (\eta_1+\eta_2)^2
+ (\gamma-1) u_1^2 (v_2^2+w_2^2) + (\gamma-1) u_2^2 (v_1^2+w_1^2) + (\gamma-1) \eta_1 \eta_2  \right] dx.
\end{eqnarray*}
Rewriting this expression in the matrix-vector form and canceling similar terms yield (\ref{energy-decomposition}).
\end{proof}

\begin{remark}
Note that
\begin{equation}
\label{equivalent}
\left( L_{-} V,V \right)_{L^2} + 2 \left( M UV, UV \right)_{L^2} \equiv \left( L_{+} V,V \right)_{L^2},
\end{equation}
where $UV = (u_1 v_1, u_2 v_2)$ is understood in the component form. The quadratic part of (\ref{energy-decomposition})
with the substitution (\ref{variables-eta}) coincides with the quadratic part of (\ref{energy-quadratic-part}).
\label{rem-equivalent}
\end{remark}

\begin{remark}
\label{rem-coercivity}
By property (i) and (iii), the first two terms in (\ref{energy-decomposition}) are positive semi-definite
in $H^1(\RR)$. However, the third term is sign-indefinite, since $\gamma > 1$. If the equivalence (\ref{equivalent})
is used, the quadratic forms involving $V$ and $W$ are again positive semi-definite in $H^1(\RR)$
by property (i) but the energy decomposition (\ref{energy-decomposition}) includes also cubic and quartic terms
in $V$ and $W$. Due to the lack of coercivity for the operator $L_-$ in $H^1(\RR)$, the cubic and quartic terms
in $W$ cannot be controlled in $H^1(\RR)$.
\end{remark}

\section{Coercivity in a weighted $H^1$ space}

In order to deal with the poor coercivity of $L_-$ mentioned in Remark \ref{rem-coercivity},
we introduce the weighted space $\mathcal{H}$ given by the inner product (\ref{weighted-H})
and the squared norm (\ref{norm-H}).

By explicit computation, we have
\begin{equation}
\label{operator-L-T}
\left( L_{-} \Psi, \Psi \right)_{L^2} = \| \Psi \|_{\mathcal{H}}^2 - \gamma \langle T \Psi, \Psi \rangle_\mH,
\end{equation}
where $T : \mathcal{H} \to \mathcal{H}$ is the operator defined by the bilinear form
\begin{equation}
\label{operator-T}
\langle T \Psi, \Phi \rangle_\mH := \int_\RR \left(1-u_1^2-u_2^2 \right) \left( \psi_1 \bar{\vp}_1 +
\psi_2 \bar{\vp}_2 \right) dx.
\end{equation}
By properties (b) and (d), the weight function $1-u_1^2-u_2^2$ is positive and
decays to zero at infinity exponentially fast.
By using the representation (\ref{operator-L-T}) and (\ref{operator-T}),
we prove the following result on coercivity of $L_-$ in metric space $\mathcal{H}$
subject to the appropriate orthogonality conditions.

\begin{Lemma}
\label{lem-coercivity-L-minus}
There exists $\Lambda_- > 0$ such that
\begin{equation}
\label{coercivity-L-minus}
\left( L_{-} \Psi, \Psi \right)_{L^2} \geq \Lambda_- \norm{\Psi}_{\mathcal{H}}^2 \quad \mbox{ for  every } \Psi \in \mathcal{H} : \quad
\langle T \Psi, U_1 \rangle_\mH = \langle T \Psi, U_2 \rangle_\mH = 0,
\end{equation}
where $U_1 = (u_1,0)$ and $U_2 = (0,u_2)$.
\end{Lemma}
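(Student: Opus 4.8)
The plan is to exploit the identity \eqref{operator-L-T}, which recasts the coercivity statement as a spectral bound for the self-adjoint operator $T$ on the Hilbert space $\mathcal{H}$. Indeed, \eqref{operator-L-T} gives
\[
\left( L_{-} \Psi, \Psi \right)_{L^2} = \norm{\Psi}_{\mathcal{H}}^2 - \gamma \langle T \Psi, \Psi \rangle_\mH,
\]
so the desired inequality $\left( L_{-} \Psi, \Psi \right)_{L^2} \geq \Lambda_- \norm{\Psi}_{\mathcal{H}}^2$ on the subspace of interest is equivalent to $\gamma \langle T \Psi, \Psi \rangle_\mH \leq (1 - \Lambda_-) \norm{\Psi}_{\mathcal{H}}^2$ there. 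Since $1 - u_1^2 - u_2^2 > 0$ decays exponentially (properties (b) and (d)), the bilinear form \eqref{operator-T} defines a bounded, nonnegative, self-adjoint $T$ on $\mathcal{H}$; moreover, because the weight decays exponentially and $H^1(\RR) \hookrightarrow \mathcal{H}$ with the embedding \eqref{bound-H}, the operator $T$ is in fact \emph{compact} on $\mathcal{H}$ (a weighted multiplication operator with exponentially decaying weight, composed with the compact local Sobolev embedding). Hence the spectrum of $T$ consists of $0$ together with a sequence of positive eigenvalues accumulating only at $0$.

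The next step is to identify the top of the spectrum of $T$. By property (iii), $L_- U_1 = L_- U_2 = 0$, and a short computation (using $\|U_j\|_{\mathcal H}$ is finite, which one checks from property (d): $u_j' \in L^2$ and $(1-u_j^2)$ times $u_j^2$ is exponentially small at the relevant end while $u_j$ itself is exponentially small at the other end) shows via \eqref{operator-L-T} that $\langle T U_j, U_j\rangle_\mH = \tfrac{1}{\gamma}\|U_j\|_{\mathcal H}^2$, i.e.\ $U_1$ and $U_2$ are eigenfunctions of $T$ with eigenvalue $1/\gamma$. One then argues that $1/\gamma$ is the \emph{largest} eigenvalue of $T$: if there were an eigenfunction with eigenvalue $\mu > 1/\gamma$, then $\left( L_- \Psi, \Psi\right)_{L^2} = (1 - \gamma\mu)\|\Psi\|_{\mathcal H}^2 < 0$, contradicting the positive semi-definiteness of $L_-$ on $H^1$ (property (i)) — here one needs a density/approximation argument to pass from $\Psi \in \mathcal H$ to $H^1$, or alternatively one works with the ratio $\left(L_-\Psi,\Psi\right)_{L^2}/\|\Psi\|_{\mathcal H}^2$ directly on its natural form domain. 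So $\max \sigma(T) = 1/\gamma$, attained exactly on $\mathrm{span}\{U_1, U_2\}$ (using that the domain walls are the unique real solutions, which pins down the kernel of $L_-$ among decaying functions).

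The final step is the spectral gap. Since $T$ is compact and self-adjoint with $\max\sigma(T) = 1/\gamma$ attained on the two-dimensional eigenspace $\mathrm{span}\{U_1,U_2\}$, there is a next eigenvalue $\mu_* < 1/\gamma$, and on the orthogonal complement (in $\mathcal H$) of $\mathrm{span}\{U_1, U_2\}$ one has $\langle T\Psi,\Psi\rangle_\mH \leq \mu_* \|\Psi\|_{\mathcal H}^2$. Now observe that the orthogonality conditions in the statement, $\langle T\Psi, U_1\rangle_\mH = \langle T\Psi, U_2\rangle_\mH = 0$, say exactly that $T\Psi \perp_{\mathcal H} \mathrm{span}\{U_1, U_2\}$; equivalently, since $TU_j = \tfrac1\gamma U_j$, that $\Psi \perp_{\mathcal H} \mathrm{span}\{U_1,U_2\}$. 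Therefore on this subspace
\[
\left( L_- \Psi, \Psi\right)_{L^2} = \|\Psi\|_{\mathcal H}^2 - \gamma\langle T\Psi,\Psi\rangle_\mH \geq (1 - \gamma\mu_*)\|\Psi\|_{\mathcal H}^2,
\]
and since $\mu_* < 1/\gamma$ we may take $\Lambda_- := 1 - \gamma\mu_* > 0$, which completes the proof.

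I expect the main obstacle to be the rigorous justification of the spectral picture for $T$ on $\mathcal{H}$ rather than on $L^2$ or $H^1$: establishing compactness of $T$ on $\mathcal{H}$ (the weighted inner product, not the standard $H^1$ one, is what makes the Rellich-type argument slightly delicate), and carefully arguing that no eigenvalue of $T$ exceeds $1/\gamma$ and that the $1/\gamma$-eigenspace is \emph{exactly} two-dimensional. The latter uses property (i) together with the identification of $\ker L_-$ (among admissible functions) as $\mathrm{span}\{U_1, U_2\}$; subtleties about which function space $L_-$ is nonnegative on, and approximating $\mathcal H$-functions by $H^1$-functions in the relevant quadratic form, are where the technical care is needed. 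Everything else is bookkeeping with \eqref{operator-L-T}–\eqref{operator-T} and the exponential decay estimates from property (d).
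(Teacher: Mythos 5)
Your overall strategy coincides with the paper's: both reduce the claim to the spectral theory of the compact self-adjoint operator $T$ on $\mathcal{H}$ via the identity \eqref{operator-L-T}, identify $U_1,U_2$ as eigenfunctions at the top eigenvalue $1/\gamma$ (equivalently, as the kernel of $\mathcal{L}_-=I-\gamma T$), and conclude from the spectral gap after observing that the stated orthogonality conditions are exactly $\mathcal{H}$-orthogonality to $U_1,U_2$. Your route to ``no eigenvalue of $T$ exceeds $1/\gamma$'' (extend the nonnegativity of the quadratic form from $H^1(\RR)$ to $\mathcal{H}$ by density and continuity) differs from the paper's, which instead runs a Sturm comparison argument on the two decoupled scalar Schr\"odinger equations $[-\partial_x^2+P(x;\lambda)]\psi=0$ with $\partial_\lambda P\le 0$. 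Your version is workable, but be aware that the density of $H^1(\RR)$ in $\mathcal{H}$ is itself not free: elements of $\mathcal{H}$ may grow like $|x|^{1/2}$ at the end where the weight $1-u_j^2$ decays, and plain cutoffs do not converge in the gradient term, so one needs logarithmic cutoffs or a comparable device.

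The one step where your proposed justification would actually fail is the claim that the $1/\gamma$-eigenspace of $T$ is \emph{exactly} $\mathrm{span}\{U_1,U_2\}$. You attribute this to ``the domain walls are the unique real solutions,'' but uniqueness of the nonlinear heteroclinic has no bearing on the dimension of the kernel of the linearized operator $L_-$ in $\mathcal{H}$: that kernel is governed by two decoupled linear second-order ODEs, each with a two-dimensional solution space containing $u_1$ (resp.\ $u_2$), and the question is whether the second, linearly independent solution also lies in $\mathcal{H}$. The paper settles this with a Wronskian argument: since the Wronskian of the two solutions is a nonzero constant, the second solution grows exponentially at the infinity where $u_j$ decays --- precisely where the weight $(\gamma-1)(1-u_j^2)$ is bounded away from zero --- so it is not in $\mathcal{H}$. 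Without this (or an equivalent) argument, a hypothetical third eigenvector at $1/\gamma$ orthogonal to $U_1,U_2$ would force $\mu_*=1/\gamma$ and destroy your spectral gap. Supply that step and the rest of your argument goes through.
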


\begin{proof}
Thanks to the fast (exponential) decay of $1-u_1^2-u_2^2$ to zero at infinity,
the same arguments as in \cite{OrbBlack} imply that the operator $T$ is compact in $\mathcal{H}$, so that
its spectrum in $\mathcal{H}$ is purely discrete. Therefore, the spectrum of the operator
$\mathcal{L}_- := I - \gamma T$ in $\mathcal{H}$ consists of isolated eigenvalues $\lambda$ accumulating to
the point $\lambda_0 = 1$. Moreover, thanks to the positivity of $1 - u_1^2 - u_2^2$, the eigenvalues accumulate
to the point $\lambda_0 = 1$ from below.

By property (iii), the zero eigenvalue of $\mathcal{L}_-$ is at least double with
eigenvectors $U_1, U_2 \in \mathcal{H}$. To show that $U_{1,2}$ belong to $\mathcal{H}$, we note that
$\partial_x u_{1,2}$ decays exponentially at both infinities, whereas
$u_{1,2}$ decays exponentially to zero at the same infinity where $1-u_{1,2}^2$ is nonzero and vice versa.
Therefore, $U_{1,2} \in \mathcal{H}$.

Eigenvalues $\lambda$ of $\mathcal{L}_-$ in $\mathcal{H}$ are determined by the spectral problem
\begin{eqnarray}
\label{spectralproblem}
L_- \Psi = \lambda K \Psi, \quad \Psi \in \mathcal{H},
\end{eqnarray}
where $L_-$ is given by (\ref{L-minus}) and $K$ is given by
\begin{equation}
\label{K}
K = \begin{bmatrix}
-\partial^2_x + (\gamma-1)(1- u_1^2) & 0\\
0 & -\partial^2_x + (\gamma-1)(1- u_2^2)
\end{bmatrix}.
\end{equation}
Note that $(K \Psi, \Psi)_{L^2} = \| \Psi \|_{\mathcal{H}}^2$.

We note that $L_-$ and $K$ are diagonal operators consisting of two Schr\"{o}dinger operators.
As a result, the spectral problem (\ref{spectralproblem}) can be written separately
for each Schr\"{o}dinger operator as follows:
\begin{eqnarray}
\label{Schr1}
\left[ -\partial_x^2 + u_1^2 + \gamma u_2^2 - 1 \right] \psi_1 = \lambda \left[-\partial^2_x + (\gamma-1)(1- u_1^2)\right] \psi_1
\end{eqnarray}
and
\begin{eqnarray}
\label{Schr2}
\left[ -\partial_x^2 + \gamma u_1^2 +  u_2^2 - 1 \right] \psi_2 = \lambda \left[-\partial^2_x + (\gamma-1)(1- u_2^2)\right] \psi_2.
\end{eqnarray}
Each Schr\"{o}dinger equation (\ref{Schr1}) and (\ref{Schr2}) with $\lambda = 0$ has one bounded and
one unbounded linearly independent solutions. The unbounded solution grows exponentially at the same infinity
where the bounded solution decays exponentially because the Wronskian of the two linearly independent solutions
is constant and nonzero. Since the unbounded solution does not belong to the space $\mathcal{H}$,
the kernel of $\mathcal{L}_-$ in $\mathcal{H}$ is two-dimensional, spanned by $U_1$ and $U_2$.

Next, we show that the spectral problem (\ref{spectralproblem}) has no negative eigenvalues $\lambda$.
Indeed, the Schr\"{o}dinger equation (\ref{Schr1}) for $\lambda < 1$ can be rewritten in the form
\begin{equation}
\label{Schr3}
\left[ - \partial_x^2 + P(x;\lambda) \right] \psi_1 = 0, \quad
P(\cdot;\lambda) := \frac{u_1^2+\gamma u_2^2 - 1 + \lambda (\gamma-1) (u_1^2 - 1)}{1-\lambda},
\end{equation}
where $P(x;\lambda)$ satisfies
$$
\partial_{\lambda} P(x;\lambda) = - \frac{\gamma (1 - u_1(x)^2 - u_2(x)^2)}{(1-\lambda)^2} \leq 0, \quad x \in \mathbb{R},
$$
where the last inequality follows by property (b). By Sturm's Comparison Theorem,
the Schr\"{o}dinger operator $-\partial_x^2 + P(x;\lambda_1): H^2(\mathbb{R}) \to L^2(\mathbb{R})$
may have zero resonance (a bounded solution in $L^{\infty}(\mathbb{R})$ for zero eigenvalue)
only if the Schr\"{o}dinger operator $-\partial_x^2 + P(x;\lambda_2): H^2(\mathbb{R}) \to L^2(\mathbb{R})$
with $\lambda_2 > \lambda_1$ has a negative eigenvalue (a localized solution in $L^2(\mathbb{R})$ for a negative eigenvalue).
Since the Schr\"{o}dinger operator $-\partial_x^2 + P(x;0): H^2(\mathbb{R}) \to L^2(\mathbb{R})$.
has no negative eigenvalues, the Schr\"{o}dinger equation
(\ref{Schr3}), or equivalently, the Schr\"{o}dinger equation (\ref{Schr1}), admits no bounded solutions in $L^{\infty}(\mathbb{R})$
for $\lambda < 0$. A similar argument applies to the Schr\"{o}dinger equation
(\ref{Schr2}) for $\lambda < 0$.

Thus, the spectral problem (\ref{spectralproblem}) has no negative eigenvalues $\lambda$,
whereas the zero eigenvalue is double and isolated from the rest of the spectrum of $\mathcal{L}_-$ in $\mathcal{H}$.
The next (nonzero) eigenvalue of $\mathcal{L}_-$ in $\mathcal{H}$ is positive.
Let us denote the positive eigenvalue by $\Lambda_-$.
Since the zero eigenvalue is exactly double, the orthogonality conditions
\[
0 = \langle \Psi, U_{1,2} \rangle_\mH = \gamma \int_{-\infty}^\infty(1-u_1^2-u_2^2) u_{1,2} \psi_{1,2} dx
= \gamma \langle T \Psi, U_{1,2} \rangle_\mH
\]
remove projections to the eigenvectors $U_1$ and $U_2$. The coercivity
bound (\ref{coercivity-L-minus}) holds by the standard spectral theorem in $\mathcal{H}$.
\end{proof}

\begin{remark}
By Lemma \ref{lem-coercivity-L-minus}, the imaginary part of perturbations $W$ to the domain walls $U$
is well controlled in the metric space $\mathcal{H}$ subject to the two orthogonality conditions (\ref{coercivity-L-minus})
that specify complex phases of the two components of $\Psi = (\psi_1,\psi_2)$ due to gauge rotations.
Compared to \cite{OrbBlack},  no additional orthogonality conditions are needed because the
domain wall solutions are true minima of the energy functional $E$.\label{remark-imaginary}
\end{remark}

\begin{remark}
For the real part of perturbations $V$ to the domain walls $U$,
we can only add one orthogonality condition that specifies a spatial translation of the solution $\Psi$.
If we add the condition $(V, \partial_x U)_{L^2} = 0$, the coercivity
bound (\ref{coercivity-L-plus}) in $H^1$ is not useful because if
$\Gamma \in L^2(\mathbb{R})$, $W \in L^{\infty}(\RR)$ but $W \notin L^2(\RR)$,
then there is no way that $V \in L^2(\RR)$. On the other hand, even though
$( L_{-} V, V)_{L^2} \geq 0$, the last term in the decomposition (\ref{energy-decomposition})
is sign-indefinite if $\gamma>1$ and hence the coercivity to control the real part of
perturbations $V$ is lost. \label{remark-real}
\end{remark}

To handle the problem described in Remark \ref{remark-real}, we divide the real line into three regions
$(-\infty,-R)$, $[-R,R]$, and $(R,\infty)$ for a fixed $R > 0$. We further introduce
a family of linear operators that interpolate between $L_-$ as $R \to 0$
and $L_+$ as $R \to \infty$. The family is given explicitly by
\begin{eqnarray}
\nonumber
L_R&=&L_{-}+2\begin{bmatrix} u_1^2 & \gamma u_1 u_2\\  \gamma u_1 u_2 & u_2^2 \end{bmatrix}\chi_{[-R,R]}\\
&=&L_{+}-2\begin{bmatrix} u_1^2 & \gamma u_1 u_2 \\  \gamma u_1 u_2 & u_2^2 \end{bmatrix}\chi_{(-\infty, -R)\cup(R, \infty)},
\label{R}
\end{eqnarray}
where $\chi$ is the characteristic function.
Using the same metric space $\mathcal{H}$ as is given by the inner product (\ref{weighted-H})
and the squared norm (\ref{norm-H}), we obtain
\begin{equation}
\label{operator-L-R-T}
\left( L_R \Psi, \Psi \right)_{L^2} = \| \Psi \|_{\mathcal{H}}^2 - \langle T_R \Psi, \Psi \rangle_\mH,
\end{equation}
where $T_R : \mathcal{H} \to \mathcal{H}$ is the operator defined by the bilinear form
\begin{eqnarray}
\nonumber
\langle T_R \Psi, \Phi \rangle_\mH & := & \gamma
\int_\RR \left(1-u_1^2-u_2^2 \right) \left( \psi_1 \bar{\varphi}_1 +
\psi_2 \bar{\varphi}_2 \right) dx \\
& \phantom{t} & -
2 \int_{-R}^R \left( u_1^2 \psi_1 \bar{\varphi}_1 + \gamma u_1 u_2 (\psi_1 \bar{\varphi}_2 + \psi_2 \bar{\varphi}_1)
+ u_2^2 \psi_2 \bar{\varphi}_2 \right) dx.
\label{operator-R-T}
\end{eqnarray}

In Appendix A, we prove Theorem \ref{Theo1} which states continuity with respect to $R$ of eigenvalues of the operator
$\mathcal{L}_R := I - T_R$ in $\mathcal{H}$ below the level $\lambda_0 = 1$. As $R \to \infty$,
eigenvalues of $\mathcal{L}_R$ converge to the eigenvalues of the operator
$\mathcal{L}_+ := I - T_{\infty}$ in $H^1(\mathbb{R})$ below the level $\lambda_0 = 1$.
By using this continuation, we prove the following result on coercivity of the operator $L_R$ in metric space
$\mathcal{H}$ subject to a single orthogonality condition.

\begin{Lemma}
\label{lem-coercivity-L-R}
There exists $R_0 > 0$ and $\Lambda_+ > 0$ such that for any $R > R_0$,
\begin{equation}
\label{coercivity-L-R}
\left( L_R \Psi, \Psi \right)_{L^2} \geq \Lambda_+ \norm{\Psi}_{\mathcal{H}}^2 \quad
{\rm for \;\; every \;} \Psi \in \mathcal{H} : \quad
\langle \Psi, \partial_x U \rangle_\mH = 0.
\end{equation}
\end{Lemma}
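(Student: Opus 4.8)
\noindent\emph{Proof plan.}
The plan is to reduce the statement to a spectral gap for $\mathcal{L}_R := I - T_R$ and then pass to the limit $R\to\infty$ via Theorem~\ref{Theo1}. By the identity (\ref{operator-L-R-T}) the inequality to be proved reads $\langle\mathcal{L}_R\Psi,\Psi\rangle_{\mathcal{H}}\ge\Lambda_+\|\Psi\|_{\mathcal{H}}^2$ for all $\Psi\in\mathcal{H}$ with $\langle\Psi,\partial_x U\rangle_{\mathcal{H}}=0$. Since the weight $1-u_1^2-u_2^2$ decays exponentially and the last integral in (\ref{operator-R-T}) is supported in $[-R,R]$ with bounded coefficients, the operator $T_R$ is compact on $\mathcal{H}$ for every finite $R$; hence $\mathcal{L}_R$ has purely discrete spectrum below the essential level $\lambda_0=1$, and I denote its eigenvalues there by $\lambda_R^{(1)}\le\lambda_R^{(2)}\le\cdots<1$.

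First I would pin down the spectrum of the limiting operator $\mathcal{L}_+ := I - T_\infty$, which satisfies $\langle\mathcal{L}_+\Psi,\Phi\rangle_{\mathcal{H}} = (L_+\Psi,\Phi)_{L^2}$, so that its eigenvalues solve $L_+\Psi = \lambda K\Psi$. I claim that $0$ is a simple isolated eigenvalue of $\mathcal{L}_+$ with eigenvector $\partial_x U\in H^1(\RR)$, that $\mathcal{L}_+$ has no spectrum in $(-\infty,0)$, and that the rest of its spectrum lies above some $c_0>0$. The last point comes from the $H^1$-coercivity (\ref{coercivity-L-plus}): given $V\in H^1(\RR)$ with $\langle V,\partial_x U\rangle_{\mathcal{H}}=0$, decompose $V = c\,\partial_x U + V_\perp$ with $(V_\perp,\partial_x U)_{L^2}=0$; using $L_+\partial_x U=0$ one obtains $(L_+V,V)_{L^2}=(L_+V_\perp,V_\perp)_{L^2}\ge\mathcal{C}_0\|V_\perp\|_{H^1}^2\ge(\mathcal{C}_0/\mathcal{C}_H^2)\|V_\perp\|_{\mathcal{H}}^2$ by (\ref{bound-H}), while the constraint forces $\|c\,\partial_x U\|_{\mathcal{H}}\le\|V_\perp\|_{\mathcal{H}}$ and hence $\|V\|_{\mathcal{H}}\le 2\|V_\perp\|_{\mathcal{H}}$, giving $(L_+V,V)_{L^2}\ge c_0\|V\|_{\mathcal{H}}^2$ with $c_0 := \mathcal{C}_0/(4\mathcal{C}_H^2)$. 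To be sure the enlargement from $H^1(\RR)$ to $\mathcal{H}$ produces no spurious low eigenvalues, I would argue exactly as in the proof of Lemma~\ref{lem-coercivity-L-minus} that any $\Psi\in\mathcal{H}$ with $L_+\Psi=\lambda K\Psi$ and $\lambda<1$ actually lies in $H^1(\RR)\cap H^2(\RR)$: at $\pm\infty$ the system decouples (there $u_1u_2\to0$) into two scalar equations of the form $\partial_x^2\psi_j = q_j^{\infty}\psi_j$ with $q_j^{\infty}>0$ whenever $\lambda<1$, so only exponential solutions occur, and finiteness of $\|\Psi\|_{\mathcal{H}}$ rules out the growing branch; thus $\Psi$ decays exponentially at both infinities, properties (i)--(ii) apply, and $\ker\mathcal{L}_+=\mathbb{R}\,\partial_x U$.

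Next I invoke Theorem~\ref{Theo1}: as $R\to\infty$ the eigenvalues of $\mathcal{L}_R$ below $1$, together with the associated spectral projections, converge to those of $\mathcal{L}_+$ below $1$. Combined with the previous step there is $R_0>0$ so that for every $R>R_0$ the operator $\mathcal{L}_R$ has at most one eigenvalue below $c_0/2$; if it has none, then $(L_R\Psi,\Psi)_{L^2}\ge(c_0/2)\|\Psi\|_{\mathcal{H}}^2$ without any constraint, and we are done, so suppose it has exactly one, $\lambda_R^{(1)}$, which is then simple, with $\lambda_R^{(1)}\to0$, with normalized eigenvector $\phi_R$ satisfying $\epsilon_R := \mathrm{dist}_{\mathcal{H}}(\phi_R,\mathbb{R}\,\partial_x U)\to0$, and with $\lambda_R^{(2)}\ge c_0/2$. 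Given $\Psi\in\mathcal{H}$ with $\langle\Psi,\partial_x U\rangle_{\mathcal{H}}=0$, write $\Psi=a\phi_R+\Psi^\perp$ with $\langle\Psi^\perp,\phi_R\rangle_{\mathcal{H}}=0$; the constraint yields $|a|=|\langle\Psi,\phi_R\rangle_{\mathcal{H}}|\le\epsilon_R\|\Psi\|_{\mathcal{H}}$, whence $\|\Psi^\perp\|_{\mathcal{H}}^2\ge(1-\epsilon_R^2)\|\Psi\|_{\mathcal{H}}^2$ and
\[
(L_R\Psi,\Psi)_{L^2}=\lambda_R^{(1)}a^2+\langle\mathcal{L}_R\Psi^\perp,\Psi^\perp\rangle_{\mathcal{H}}\ge\lambda_R^{(1)}a^2+\frac{c_0}{2}\|\Psi^\perp\|_{\mathcal{H}}^2\ge\Bigl(\lambda_R^{(1)}\epsilon_R^2+\frac{c_0}{2}(1-\epsilon_R^2)\Bigr)\|\Psi\|_{\mathcal{H}}^2 ,
\]
where I used $\lambda_R^{(1)}a^2\ge\lambda_R^{(1)}\epsilon_R^2\|\Psi\|_{\mathcal{H}}^2$ when $\lambda_R^{(1)}<0$ and simply discarded the term otherwise. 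Since $\lambda_R^{(1)}\to0$ and $\epsilon_R\to0$, the prefactor converges to $c_0/2>0$, so after possibly enlarging $R_0$ we obtain the claimed bound with $\Lambda_+ := c_0/4$ for all $R>R_0$.

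The step I expect to be the main obstacle is the second paragraph, locating the spectrum of $\mathcal{L}_+$ in the weighted space. One must reconcile the $L^2$-orthogonality to $\partial_x U$ in (\ref{coercivity-L-plus}) with the $\mathcal{H}$-orthogonality in the statement, and, more seriously, exclude any eigenvalue of $\mathcal{L}_+$ at or below $0$ other than the translational one after enlarging $H^1(\RR)$ to $\mathcal{H}$; this is exactly where the ODE-asymptotics argument of Lemma~\ref{lem-coercivity-L-minus} is indispensable. The remaining delicate point --- the continuation $R\to\infty$ with convergence of the eigenprojections --- is isolated in Theorem~\ref{Theo1} of Appendix~A, which I use as a black box.
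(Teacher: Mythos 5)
Your strategy coincides with the paper's: reduce to the generalized spectral problems $L_R\Psi=\lambda K\Psi$ and $L_+\Psi=\lambda K\Psi$ via (\ref{operator-L-R-T}), use compactness of $T_R$ for finite $R$, locate the spectrum of the limiting problem using properties (i)--(ii), and transfer the spectral gap to large $R$ through Theorem~\ref{Theo1}. Your second paragraph (the quantitative gap $c_0$ for $\mathcal{L}_+$ derived from (\ref{coercivity-L-plus}) together with the ODE asymptotics excluding spurious $\mathcal{H}$-eigenfunctions) correctly fills in details the paper leaves implicit, and the algebra in your final estimate is sound.

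The one step that is not justified as written is the assertion that Theorem~\ref{Theo1} gives convergence of the ``associated spectral projections'', hence $\epsilon_R:=\mathrm{dist}_{\mathcal{H}}(\phi_R,\mathbb{R}\,\partial_x U)\to 0$. Theorem~\ref{Theo1} provides only \emph{weak} convergence of the normalized eigenfunctions in $\mathcal{H}$, and only along a subsequence; weak convergence gives no control on the norm distance $\epsilon_R$, so you cannot extract $\epsilon_R\to 0$ from the black box you invoke. The gap is repairable inside your own scheme, because your final bound does not actually require $\epsilon_R\to 0$: since $\lambda_R^{(1)}\to 0$, it suffices to have $\epsilon_R\le\epsilon_0<1$ uniformly for large $R$, equivalently that $|\langle\phi_R,\partial_x U\rangle_{\mathcal{H}}|$ stays bounded away from zero, for then the prefactor $\lambda_R^{(1)}\epsilon_R^2+\tfrac{c_0}{2}(1-\epsilon_R^2)$ is eventually bounded below by $\tfrac{c_0}{4}(1-\epsilon_0^2)$. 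This weaker input does follow from Theorem~\ref{Theo1}: along any subsequence the weak limit of $\phi_{R_j}$ is a nonzero eigenfunction of (\ref{spectral-problem-L-plus}) at the simple eigenvalue $0$, hence a nonzero multiple of $\partial_x U$, so $\langle\phi_{R_j},\partial_x U\rangle_{\mathcal{H}}$ has a nonzero limit; a contradiction argument then upgrades this to a uniform bound for all large $R$. (A parallel remark: when problem (\ref{spectral-problem-L-plus}) has only the zero eigenvalue below $1$, you must also rule out a second eigenvalue of $\mathcal{L}_R$ creeping below $c_0/2$; the paper glosses over this too, but it follows from the multiplicity count in Lemmas~\ref{Lem2} and~\ref{Lem5}.) With these adjustments your proof is complete and follows the same route as the paper's.
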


\begin{proof}
Thanks to the fast (exponential) decay of $1-u_1^2-u_2^2$ to zero at infinity
and the compact support of the second integral in (\ref{operator-R-T}),
the operator $T_R$ for any fixed $R > 0$ is compact in $\mathcal{H}$.
Therefore, the spectrum of the operator $\mathcal{L}_R := I - T_R$ in $\mathcal{H}$
consists of isolated eigenvalues accumulating to the point $\lambda_0 = 1$. Eigenvalues
$\lambda$ of $\mathcal{L}_R$ in $\mathcal{H}$ are given by the spectral problem
\begin{equation}
\label{spectral-problem-R}
L_R \Psi = \lambda K \Psi, \quad \Psi \in \mathcal{H},
\end{equation}
where $L_R$ is given by (\ref{R}) and $K$ is given by (\ref{K}).

In comparison, the operator $T_{\infty}$ for $R = \infty$ is not compact in $\mathcal{H}$, so that
the spectrum of $\mathcal{L}_+ := I - T_{\infty}$ is only defined in $H^1(\mathbb{R})$ and
includes also the essential spectrum bounded from below by $\lambda_0 = 1$.
The spectrum of $\mathcal{L}_+$ is defined by the spectral problem
\begin{equation}
\label{spectral-problem-L-plus}
L_+ \Psi = \lambda K \Psi, \quad \Psi \in H^1(\mathbb{R}),
\end{equation}
where $L_+$ is given by (\ref{L-plus}) and $K$ is given by (\ref{K}).
From the asymptotic values of the potentials of $L_+$ and $K$ at infinity,
we can see that the essential spectrum of the spectral problem (\ref{spectral-problem-L-plus})
is located for $\lambda \in [1,\infty)$. By property (i), isolated eigenvalues
of the spectral problem (\ref{spectral-problem-L-plus}) are located for $\lambda \in [0,1)$.

The first (zero) eigenvalue of $\mathcal{L}_+$ is simple and
corresponds to the eigenvector $\Psi = \partial_x U$ by property (ii).
The second (nonzero) eigenvalue of the spectral problem (\ref{spectral-problem-L-plus}) is strictly positive.
The coercivity bound (\ref{coercivity-L-R}) for a fixed $R > 0$ sufficiently large is obtained by
continuity of isolated eigenvalues of the operator $\mathcal{L}_R$ in $\mathcal{H}$
below the point $\lambda_0 = 1$ with respect to the parameter $R$.
The continuity of eigenvalues below $\lambda_0 = 1$ as $R \to \infty$ is proved in
Theorem \ref{Theo1} of Appendix A.
\end{proof}

\begin{remark}
As $R \to 0$, $\mathcal{L}_R$ converges to $\mathcal{L}_-$ in the norm of $\mathcal{H}$, hence
$\mathcal{L}_R$ has two small eigenvalues for small $R > 0$. The coercivity bound (\ref{coercivity-L-R})
with a single orthogonality condition only holds for large $R > 0$ and clearly fails as $R \to 0$.
\end{remark}

\section{Energy estimates}

As we described in Remark \ref{remark-real}, the decomposition (\ref{energy-decomposition}) in Lemma \ref{lem-decomposition}
for the difference between energy levels is not really useful. On the other hand, the equivalent
representation (\ref{equivalent}) in Remark \ref{rem-equivalent} can not be used uniformly on
the real line. Due to these reasons, we write
\begin{eqnarray}
E(U + V + iW) - E(U) = \Delta E + \left( L_{-} W, W \right)_{L^2}
\label{energy-levels}
\end{eqnarray}
where $\Delta E$ can be represented as follows:
\begin{eqnarray*}
\Delta E & = & \left( L_{-} V, V \right)_{L^2} + \frac{1}{2} \left( M \Gamma,\Gamma \right)_{L^2} \\
& = & \int_{-R}^R B_{+}(V) dx + \int_{-R}^R \left[ N_3(V,W) + N_4(V,W) \right] dx \\
& \phantom{t} & +
\left( \int_{-\infty}^{-R} + \int_R^\infty \right) B_{-}(V) dx +
\frac{1}{2} \left( \int_{-\infty}^{-R} + \int_R^\infty \right) (\eta_1^2+\eta_2^2)dx \\
& \phantom{t} & + \gamma \int_{-\infty}^{-R} \eta_2( 2 u_1 v_1 + v_1^2+w_1^2) dx
+ \gamma \int^{\infty}_{R} \eta_1(2 u_2 v_2 + v_2^2+w_2^2) dx.
\end{eqnarray*}
Here $B_+(V)$ and $B_-(V)$ are densities for the quadratic forms
$\left( L_+ V, V \right)_{L^2}$ and $\left( L_{-} V, V \right)_{L^2}$,
whereas $N_3$ and $N_4$ are cubic and quartic terms given by
\begin{eqnarray*}
N_3(V,W) = 2 (v_1^2+w_1^2)(u_1 v_1 + \gamma u_2 v_2) + 2 (v_2^2+w_2^2) (\gamma u_1 v_1 + u_2 v_2)
\end{eqnarray*}
and
\begin{eqnarray*}
N_4(V,W) = \frac{1}{2} \left[ (v_1^2+w_1^2)^2 +
2 \gamma (v_1^2+w_1^2) (v_2^2+w_2^2) + (v_2^2+w_2^2)^2 \right].
\end{eqnarray*}
The quadratic part given by $B_+(V)$ and $B_-(V)$ is represented by the
quadratic form associated with the operator $L_R$ defined by (\ref{R}). Thus,
the representation for $\Delta E$ is different on the intervals $(-\infty,-R)$, $[-R,R]$, and $(R,\infty)$.

Let us consider estimates on the semi-infinite interval $[R,\infty)$.
Since  $u_2(x)$ is exponentially small as $x \to +\infty$, according to the
sharp decay estimates (\ref{exp-decay}),
it follows from the definitions (\ref{weighted-H}) and (\ref{norm-H}) that
\be
\label{embeddings}
\norm{v_2}_{H^1(R,\infty)}  \leq C_{\gamma} \| V \|_{\mathcal{H}}, \quad \norm{w_2}_{H^1(R,\infty)}\leq C_{\gamma} \| W \|_{\mathcal{H}},
\ee
for some positive constant $C_{\gamma}$ that depends on $\gamma > 1$.
By Sobolev's embedding, we have $v_2(x) + i w_2(x) \to 0$ as $x \to +\infty$ and
\be
\label{embed-L-inf}
\norm{v_2 + i w_2}_{L^{\infty}(R,\infty)} \leq
C_{\rm emb} \norm{v_2 + i w_2}_{H^1(R,\infty)} \leq C_{\rm emb} C_{\gamma} \| V + i W \|_{\mathcal{H}},
\ee
where $C_{\rm emb}$ is the Sobolev embedding constant. In what follows, we will
omit writing the dependence
of the positive constants from the fixed parameter $\gamma > 1$.

The estimate (\ref{embed-L-inf}) allows us to control the last term in $\Delta E$.
Since $u_2(x)$ is exponentially small as $x \to +\infty$
in accordance with (\ref{exp-decay}),
there are positive constants $C$ and $\kappa$ such that
\be
\label{estimates-1}
\left| \int^{\infty}_{R} \eta_1(2 u_2 v_2 + v_2^2+w_2^2)  dx \right| \leq
C \left( e^{-\kappa R} \| V + iW \|_{\mathcal{H}} + \| V + i W \|_{\mathcal{H}}^2 \right) \| \eta_1 \|_{L^2(|x| \geq R)}.
\ee
Similar estimates are available for the term
\be
\label{estimates-1-other}
\left| \int_{-\infty}^{-R} \eta_2( 2 u_1 v_1 + v_1^2+w_1^2) dx \right| \leq
C \left( e^{-\kappa R} \| V + iW \|_{\mathcal{H}} + \| V + i W \|_{\mathcal{H}}^2 \right) \| \eta_2 \|_{L^2(|x| \geq R)}.
\ee
since $u_1(x)$ is exponentially small as $x \to -\infty$.

It remains to control the nonlinear terms
$$
\int_{-R}^R \left[ N_3(V,W) + N_4(V,W) \right] dx.
$$
The quartic term $N_4$ is positive, therefore,
it is controlled from below by zero. The cubic term $N_3$ is bounded by
\be
\label{estimates-2}
\left| \int_{-R}^R N_3(V,W) dx \right| \leq C \| V + i W\|_{H^1(-R,R)}^3
\ee
for some positive constant $C$. However, since $1 - u_2^2(x)$ is exponentially small as $x \to -\infty$
and $1 - u_1^2(x)$ is exponentially small as $x \to +\infty$ in accordance with (\ref{exp-decay}), it follows
from the definitions (\ref{weighted-H}) and (\ref{norm-H}) that
\be
\label{embeddings-R}
\| V + i W \|_{H^1(-R,R)}  \leq C e^{\kappa R} \| V + i W \|_{\mathcal{H}},
\ee
for some positive constants $C$ and $\kappa$.

By combining (\ref{coercivity-L-minus}), (\ref{coercivity-L-R}), (\ref{energy-levels}), (\ref{estimates-1}),
(\ref{estimates-1-other}),
(\ref{estimates-2}), and (\ref{embeddings-R}), we obtain the estimate
\begin{eqnarray}
\nonumber
E(U + V + iW) - E(U) & \geq & \Lambda_+ \| V \|_{\mathcal{H}}^2 +
\Lambda_- \| W \|_{\mathcal{H}}^2 + \frac{1}{2} \norm{\eta_1}_{L^2(\abs{x}\geq R)}^2 +
\frac{1}{2} \norm{\eta_2}_{L^2(\abs{x}\geq R)}^2 \\
\nonumber
& \phantom{t} & - \gamma C e^{-\kappa R} \| V + i W \|_{\mathcal{H}} \left( \norm{\eta_1}_{L^2(\abs{x}\geq R)} +
\norm{\eta_2}_{L^2(\abs{x}\geq R)} \right) \\
\nonumber
& \phantom{t} &  -
\gamma C \| V + i W \|_{\mathcal{H}}^2 \left( \norm{\eta_1}_{L^2(\abs{x}\geq R)} +
\norm{\eta_2}_{L^2(\abs{x}\geq R)} \right) \\
& \phantom{t} &  -  C e^{3 \kappa R} \| V + i W \|^3_{\mathcal{H}},
\label{estimate}
\end{eqnarray}
provided $W$ satisfies the two orthogonality conditions in (\ref{coercivity-L-minus}) and
$V$ satisfies the only orthogonality condition in (\ref{coercivity-L-R}).
The latter constraints are satisfied by adding modulation parameters to the solution $\Psi$,
see Section \ref{sec-modulation}.

Let $\nu > 0$ be a small number that defines radius of a ball in $\mathcal{H}$ for the perturbation terms such that
\be
\label{ball}
\| V + i W \|_{\mathcal{H}} + \norm{\eta_1}_{L^2(\abs{x}\geq R)} + \norm{\eta_2}_{L^2(\abs{x}\geq R)} \leq \nu e^{-3 \kappa R}.
\ee
Note that the ball is exponentially small in terms of large parameter $R$.
Also note that the definition (\ref{ball}) agrees with the distance $\rho_R$ defined by (\ref{distance-R}).

For $\nu > 0$ sufficiently small and $R > 0$ sufficiently large, the estimate (\ref{estimate})
allows us to control the perturbation term
in terms of the conserved energy  by
\begin{eqnarray}
\| V + i W \|_{\mathcal{H}}^2 + \norm{\eta_1}_{L^2(\abs{x}\geq R)}^2 + \norm{\eta_2}_{L^2(\abs{x}\geq R)}^2
\leq C \left[ E(U + V + i W) - E(U)  \right].
\label{control}
\end{eqnarray}
The right-hand side of (\ref{control}) is conserved in time, so its value is defined by the initial data for the perturbation terms
$V + i W$. The estimates (\ref{ball}) and (\ref{control}) are compatible if
\begin{equation}
E(U + V + i W) - E(W) \leq C \nu^2 e^{-6 \kappa R}.
\label{inital-energy}
\end{equation}
The bound (\ref{inital-energy}) can be satisfied by the choice of $\delta$
in the bound (\ref{bound-initial}) for the initial data. Then, the bounds (\ref{control}) and (\ref{inital-energy})
are used to control
the solution over all times and to define $\varepsilon$ in the bound (\ref{bound-final}).

\section{Modulation equations}
\label{sec-modulation}

It remains to define a suitable solution $\Psi$ to the coupled GP system (\ref{GP}),
which can be decomposed as $U + V + i W$, where $W$ satisfies the two orthogonality conditions in (\ref{coercivity-L-minus}) and
$V$ satisfies the only orthogonality condition in (\ref{coercivity-L-R}).
This is done by introducing the modulation parameters $\alpha$, $\theta_1$, and $\theta_2$,
using the translation and gauge invariance in the coupled GP system (\ref{GP}),
and setting the modulation equations. The algorithm is fairly standard, see, e.g. the recent work in
\cite{GPII}, hence we overview only basic details of the algorithm. We note however that
the orthogonality conditions are formulated in the weighted space $\mathcal{H}$, which is adjusted to
the definition of the domain walls $U$. Therefore, one needs to be careful with the
definition of the modulation parameter $\alpha$.

We start by writing the solution to the coupled GP system (\ref{GP}) in the form
\begin{eqnarray}
\nonumber
\Psi_{\alpha(t),\theta_1(t),\theta_2(t)}(t,x) & := & (e^{it + i\theta_1(t)} \psi_1(t,x + \alpha(t)),
e^{i t + i\theta_2(t)} \psi_2(t,x +\alpha(t))) \\
& = & U(x) + V(t,x) + i W(t,x),
  \qquad (t,x) \in \R \times \R, \label{decomposition2}
\end{eqnarray}
where the perturbations $V$ and $W$ are real-valued and satisfy the
orthogonality conditions
\begin{equation}
\label{projections2}
  \langle V(t,\cdot), \partial_x U \rangle_{\mathcal{H}} \,=\, 0, \qquad
  \langle W(t,\cdot), U_1 \rangle_{\mathcal{H}} = \langle W(t,\cdot), U_2 \rangle_{\mathcal{H}} = 0, \qquad
  t \in \R.
\end{equation}
The constraints (\ref{projections2}) allow us to determine uniquely the
modulation parameters, namely the translation $\alpha(t)$ and the
complex phases $\theta_1(t)$ and $\theta_2(t)$, at least for solutions $\Psi(t,\cdot)$ in a small
neighborhood of the domain walls $U$. This is done according to the following lemma.

\begin{Lemma}
\label{Lemma-xith}
There exists $\varepsilon_0 > 0$ such that, for any $\Psi \in \mathcal{D}\cap L^\infty(\RR)$ satisfying
\begin{equation}
\label{inf2}
  \inf_{\alpha, \theta_1, \theta_2 \in \R} \| \Psi_{\alpha,\theta_1,\theta_2} - U \|_{\mathcal{H}} \,\le\, \varepsilon_0,
\end{equation}
there exist $\alpha \in \R$, $\theta_1 \in \R/(2\pi\Z)$, and $\theta_2 \in \R/(2\pi\Z)$ such that
\begin{equation}
\label{decomp2}
  \Psi_{\alpha,\theta_1,\theta_2} \,=\, U + V + i W,
\end{equation}
where the real-valued functions $V$ and $W$ satisfy the orthogonality conditions
\begin{equation}
\label{projections2stat}
  \langle V, \partial_x U \rangle_{\mathcal{H}} \,=\, 0, \qquad
  \langle W, U_1 \rangle_{\mathcal{H}} = \langle W, U_2 \rangle_{\mathcal{H}} = 0.
\end{equation}
Moreover, the modulation parameters $\alpha \in \R$, $\theta_1 \in \R/(2\pi\Z)$, and $\theta_1 \in \R/(2\pi\Z)$ depend continuously on
$\Psi$ in $\mathcal{H}$.
\end{Lemma}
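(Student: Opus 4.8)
The plan is to prove Lemma~\ref{Lemma-xith} by the Implicit Function Theorem, linearizing the orbit map at the base point $U$. First I would collect the analytic facts that make the construction meaningful in the weighted space $\mathcal{H}$. By the decay estimates (\ref{exp-decay}) together with properties (a)--(c), the functions $\partial_x U$, $U_1=(u_1,0)$ and $U_2=(0,u_2)$ all belong to $\mathcal{H}$ (this was already noted for $U_1$ and $U_2$ in the proof of Lemma~\ref{lem-coercivity-L-minus}, and the same exponential estimates handle $\partial_x U$), and none of them vanishes, so $\|\partial_x U\|_{\mathcal{H}}^2$, $\|U_1\|_{\mathcal{H}}^2$ and $\|U_2\|_{\mathcal{H}}^2$ are strictly positive. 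Moreover the vector space $\mathcal{H}$ is invariant both under the gauge action $(\psi_1,\psi_2)\mapsto(e^{i\theta_1}\psi_1,e^{i\theta_2}\psi_2)$, which is an isometry of $\mathcal{H}$, and under spatial translations $\psi_j\mapsto\psi_j(\cdot+\alpha)$, which are bounded on $\mathcal{H}$ with constant depending on $\alpha$ because the weight $1-u_j^2$ and its translate decay with the same exponential rates at $\pm\infty$. Hence, for $\Phi$ in a small $\mathcal{H}$-ball around $U$ and $(\beta,\vartheta_1,\vartheta_2)$ near the origin, the difference $\Phi_{\beta,\vartheta_1,\vartheta_2}-U$ again lies in $\mathcal{H}$, being the sum of a translated-and-gauged copy of $\Phi-U\in\mathcal{H}$ and of $(e^{i\vartheta_1}u_1(\cdot+\beta),e^{i\vartheta_2}u_2(\cdot+\beta))-U$, the latter a difference of two elements of $\mathcal{H}$.

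Next I would introduce the $\RR^3$-valued map
\[
G(\beta,\vartheta_1,\vartheta_2;\Phi):=\Bigl(\langle \mathrm{Re}\,\Phi_{\beta,\vartheta_1,\vartheta_2}-U,\partial_x U\rangle_{\mathcal{H}},\ \langle \mathrm{Im}\,\Phi_{\beta,\vartheta_1,\vartheta_2},U_1\rangle_{\mathcal{H}},\ \langle \mathrm{Im}\,\Phi_{\beta,\vartheta_1,\vartheta_2},U_2\rangle_{\mathcal{H}}\Bigr),
\]
defined for $(\beta,\vartheta_1,\vartheta_2)$ in a neighborhood of $0$ and $\Phi$ in a neighborhood of $U$ in $\mathcal{H}$; a zero of $G$ is precisely a choice of parameters for which the real-valued functions $V:=\mathrm{Re}\,\Phi_{\beta,\vartheta_1,\vartheta_2}-U$ and $W:=\mathrm{Im}\,\Phi_{\beta,\vartheta_1,\vartheta_2}$ satisfy the orthogonality conditions (\ref{projections2stat}). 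The map $G$ is continuous in $\Phi$, smooth in $(\vartheta_1,\vartheta_2)$ since these enter through $e^{i\vartheta_j}$, and $C^1$ in $\beta$: after the change of variables $x\mapsto x+\beta$ the $\beta$-dependence is carried by the translates $u_j(\cdot-\beta)^2$ of the weight and by the translates of the fixed smooth, exponentially localized functions $\partial_x u_j$ and $u_j$, all of which depend differentiably on $\beta$ when paired against the fixed data supplied by $\Phi\in\mathcal{H}$. Clearly $G(0,0,0;U)=0$. Differentiating the orbit map at $U$ with respect to the parameters, evaluated at the origin, yields the tangent vectors $\partial_x U$ (real), $iU_1$ and $iU_2$ (the last two purely imaginary) in the directions $\beta$, $\vartheta_1$, $\vartheta_2$ respectively; since $\mathrm{Re}(iU_{1,2})=0$, $\mathrm{Im}(\partial_x U)=0$, and the remaining cross term $\langle U_2,U_1\rangle_{\mathcal{H}}$ vanishes because $U_1$ and $U_2$ have disjoint component supports, the partial Jacobian is
\[
D_{(\beta,\vartheta_1,\vartheta_2)}G(0,0,0;U)=\mathrm{diag}\bigl(\|\partial_x U\|_{\mathcal{H}}^2,\ \|U_1\|_{\mathcal{H}}^2,\ \|U_2\|_{\mathcal{H}}^2\bigr),
\]
which is invertible by the positivity noted above.

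I would then invoke the quantitative Implicit Function Theorem: there are $\varepsilon_0>0$ and $r>0$ such that whenever $\|\Phi-U\|_{\mathcal{H}}\le 2\varepsilon_0$ there is a unique $(\beta^*,\vartheta_1^*,\vartheta_2^*)$ in the ball of radius $r$ about $0$ with $G(\beta^*,\vartheta_1^*,\vartheta_2^*;\Phi)=0$, depending continuously on $\Phi$ in $\mathcal{H}$. Given $\Psi\in\mathcal{D}\cap L^\infty(\RR)$ satisfying (\ref{inf2}) (which in particular forces $\Psi_{\alpha,\theta_1,\theta_2}-U\in\mathcal{H}$ for at least one choice of parameters), I would fix a near-optimal such choice $(\alpha_0,\theta_{1,0},\theta_{2,0})$ so that $\Phi:=\Psi_{\alpha_0,\theta_{1,0},\theta_{2,0}}$ obeys $\|\Phi-U\|_{\mathcal{H}}\le 2\varepsilon_0$, apply the above, and use the group law $\Phi_{\beta,\vartheta_1,\vartheta_2}=\Psi_{\alpha_0+\beta,\theta_{1,0}+\vartheta_1,\theta_{2,0}+\vartheta_2}$ to set $\alpha:=\alpha_0+\beta^*$ and $\theta_j:=\theta_{j,0}+\vartheta_j^*$ (the latter two in $\RR/2\pi\Z$). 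This produces the decomposition (\ref{decomp2}) with the orthogonality conditions (\ref{projections2stat}), and the continuous dependence of the modulation parameters on $\Psi$ in $\mathcal{H}$ follows from the continuous dependence of $(\beta^*,\vartheta_1^*,\vartheta_2^*)$ on $\Phi$ and of $\Phi$ on $\Psi$, the outcome being independent of the particular near-minimizer chosen thanks to the local uniqueness part of the theorem.

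I expect the main obstacle to be the regularity step: verifying that the functionals in $G$ are well defined and $C^1$ on the weighted space $\mathcal{H}$, namely that $\Phi_{\beta,\vartheta_1,\vartheta_2}-U$ stays in $\mathcal{H}$ under small translations even though the weight $1-u_j^2$ is tied to the unshifted profile $U$ and is not translation invariant, and that the translation action is differentiable when tested against the fixed exponentially localized functions $\partial_x U$, $U_1$, $U_2$. This is exactly the delicacy flagged before the lemma concerning the definition of the translation parameter $\alpha$. Once this regularity is in place, the Lyapunov--Schmidt argument is routine and is made especially transparent here by the fact that the linearization at $U$ is diagonal with explicitly positive diagonal entries.
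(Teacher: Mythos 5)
Your proposal is correct and follows essentially the same route as the paper: the same three orthogonality functionals, the same diagonal invertible Jacobian $\mathrm{diag}(\|\partial_x U\|_{\mathcal{H}}^2, \|U_1\|_{\mathcal{H}}^2, \|U_2\|_{\mathcal{H}}^2)$, and a quantitative implicit function theorem, with the only cosmetic difference that you linearize at the exact profile $U$ and treat $\Phi$ as a parameter, whereas the paper linearizes at the perturbed point and absorbs the resulting $\mathcal{O}(\varepsilon)$ correction into the invertibility estimate. Your attention to the translation-regularity of the weighted pairings (moving the derivative onto the localized profiles by integration by parts) matches the paper's treatment of the first column of the Jacobian.
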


\begin{proof}
It is sufficient to prove \eqref{decomp2} for all $\Psi \in \mathcal{D}\cap L^\infty(\RR)$
such that $\varepsilon := \| \Psi - U \|_{\mathcal{H}}$ is sufficiently small. Given
such a $\Psi \in \mathcal{D}\cap L^\infty(\RR)$, we consider the smooth vector field ${\bf f} : \R^3
\to \R^3$ defined by
\[
  {\bf f}(\alpha,\theta_1,\theta_2) \,=\, \begin{pmatrix} \langle {\rm Re} \Psi_{\alpha,\theta_1,\theta_2}, \partial_x U \rangle_{\mathcal{H}} \\
  \langle {\rm Im} \Psi_{\alpha,\theta_1,\theta_2}, U_1 \rangle_{\mathcal{H}} \\
  \langle {\rm Im} \Psi_{\alpha,\theta_1,\theta_2}, U_2 \rangle_{\mathcal{H}}
  \end{pmatrix}, \qquad (\alpha,\theta_1,\theta_2) \in \R^3.
\]
We check that $\langle U, \partial_x U \rangle_{\mathcal{H}} = 0$ by direct substitution in (\ref{weighted-H}) and integration.
Therefore, by construction, we have ${\bf f}(\alpha,\theta_1,\theta_2) = {\bf 0}$ if and only if
$\Psi$ can be represented as in \eqref{decomp2} for some
real-valued functions $V$ and $W$ satisfying the orthogonality conditions
\eqref{projections2stat}.

By Cauchy--Schwarz inequality, since $\partial_x U, U_1, U_2 \in \mathcal{H}$,
we have $\|{\bf f}(0,0,0) \| \le C \varepsilon$ for some positive $\varepsilon$-independent constant $C$.
Furthermore, the Jacobian matrix of the function ${\bf f}$
at the origin $(0,0,0)$ is given by
\begin{eqnarray*}
  D {\bf f}(0,0,0) & \,=\,&
  \begin{pmatrix} \| \partial_x U \|^2_{\mathcal{H}} & 0 & 0 \\
  0 & \| U_1 \|^2_{\mathcal{H}} & 0 \\
  0 & 0 & \| U_2 \|^2_{\mathcal{H}}
  \end{pmatrix} \\
& \phantom{t} &   +   \begin{pmatrix} \langle {\rm Re} \partial_x (\Psi - U), \partial_x U \rangle_{\mathcal{H}} &
-\langle {\rm Im} (\Psi - U)_1, \partial_x U \rangle_{\mathcal{H}} &
-\langle {\rm Im} (\Psi - U)_2, \partial_x U \rangle_{\mathcal{H}} \\
  \langle {\rm Im} \partial_x (\Psi - U), U_1 \rangle_{\mathcal{H}} & \langle {\rm Re} (\Psi - U)_1, U_1 \rangle_{\mathcal{H}} &
  \langle {\rm Re} (\Psi - U)_2, U_1 \rangle_{\mathcal{H}} \\
 \langle {\rm Im} \partial_x (\Psi - U), U_2 \rangle_{\mathcal{H}} & \langle {\rm Re} (\Psi - U)_1, U_2 \rangle_{\mathcal{H}} & \langle {\rm Re} (\Psi - U)_2, U_2 \rangle_{\mathcal{H}},
  \end{pmatrix}.
\end{eqnarray*}
where the subscript $1,2$ denotes the projection to the first or second component of the vectors, respectively.
The first term in $D {\bf f}(0,0,0)$ is a fixed invertible matrix.
The second term in $D {\bf f}(0,0,0)$ is bounded in the matrix norm by $C\varepsilon$
for another positive $\varepsilon$-independent constant $C$. Indeed, for the second and third columns,
these bounds follow by the Cauchy--Schwarz inequality.
For the first column, before applying the Cauchy--Schwarz inequality,
the $x$ derivative can be moved from $\Psi - U$ to $\partial_x U$, $U_1$, and $U_2$
by integration by parts, with the use of smoothness and decay of $\partial_x U$, $U_1$, and $U_2$.
Hence $D {\bf f}(0,0,0)$ is invertible if $\varepsilon$ is small enough
and the norm of the inverse of $D {\bf f}(0,0,0)$ is bounded by a constant
independent of $\varepsilon$.

Finally, it is straightforward to verify
that the second-order derivatives of ${\bf f}$ are uniformly bounded
near $(0,0,0)$ if $\varepsilon$ is small. These observations together imply that there
exists a unique triple $(\alpha,\theta_1,\theta_2)$, in a neighborhood of size
$\mathcal{O}(\varepsilon)$ of $(0,0,0)$, such that ${\bf f}(\alpha,\theta_1,\theta_2)
= {\bf 0}$. Thus the decomposition \eqref{decomp2} and \eqref{projections2stat} holds for
these values of $(\alpha,\theta_1,\theta_2)$. In addition, the above argument shows
that the modulation parameters $(\alpha,\theta_1,\theta_2)$ depend continuously on
$\Psi \in \mathcal{H}$.
\end{proof}

The Cauchy problem for the coupled GP system \eqref{GP} is globally
well-posed for any $\Psi_0 \in \mathcal{D} \cap L^{\infty}(\R)$ \cite{Zhid}.
If $\Psi(t)$ is a solution of \eqref{GP} in $\mathcal{D} \cap L^{\infty}(\R)$ which stays
in a neighborhood of the orbit of the domain walls $U$ for all $t \in \mathbb{R}$, the
modulation parameters $\alpha(t)$, $\theta_1(t)$, and $\theta_2(t)$ given by
the decomposition \eqref{decomposition2} subject to the orthogonality
conditions \eqref{projections2} are continuous functions of time.
The following lemma controls evolution of the modulation parameters according to
the modulation equations.

\begin{Lemma}
\label{difflem}
If $\varepsilon > 0$ is sufficiently small and if $\Psi(t)$ is
a global solution to the coupled GP equations \eqref{GP} in $\mathcal{D} \cap L^{\infty}(\R)$
satisfying, for all $t \in \R$,
\begin{equation}
\label{inf3}
  \inf_{\alpha, \theta_1,\theta_2 \in \R} \| \Psi_{\alpha,\theta_1,\theta_2} - U \|_{\mathcal{H}} \,\le\, \varepsilon,
\end{equation}
then the modulation parameters $\alpha(t)$, $\theta_1(t)$, and $\theta_2(t)$ in the
decomposition \eqref{decomposition2} and \eqref{projections2}
are continuously differentiable functions of $t$ satisfying
\eqref{bound-time-per}.
\end{Lemma}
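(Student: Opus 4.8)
The plan is to derive the modulation equations by differentiating the orthogonality conditions \eqref{projections2} along the flow, to solve them for $(\dot\alpha,\dot\theta_1,\dot\theta_2)$ through a near-diagonal invertible linear system, to bound the resulting right-hand side by $C\varepsilon$, and to integrate in time. First I would substitute the ansatz \eqref{decomposition2} into the coupled GP system \eqref{GP}; since $U$ solves \eqref{Evarphi}, the stationary and linear-in-$U$ contributions cancel, and one is left with an equation in which $\partial_t V$ is governed by $L_-W$ (with $L_-$ as in \eqref{L-minus}) together with the modulation terms $\dot\alpha\,\partial_x(U+V)$, $-\dot\theta_1 w_1$ in the first component and $-\dot\theta_2 w_2$ in the second, plus a remainder $\mathcal{N}_V(V,W)$ that is quadratic and cubic in $(V,W)$ with smooth bounded coefficients built from $U$; symmetrically, $\partial_t W$ is governed by $-L_+V$ (with $L_+$ as in \eqref{L-plus}) together with $\dot\alpha\,\partial_x W$, $\dot\theta_1(u_1+v_1)$ and $\dot\theta_2(u_2+v_2)$ in the respective components, and a quadratic--cubic remainder $\mathcal{N}_W(V,W)$. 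Since $\Psi(t)\in\mathcal{D}\cap L^\infty(\R)$ need not be smooth, this identity and the differentiations below are justified in the standard way, by approximation and by first considering the integrated form of the modulation equations.

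Differentiating the three identities in \eqref{projections2} in $t$ and inserting the perturbation equation produces a linear system
\[
\mathbf{A}(t)\,\bigl(\dot\alpha(t),\,\dot\theta_1(t),\,\dot\theta_2(t)\bigr)^{\top} \,=\, \mathbf{b}(t).
\]
The matrix $\mathbf{A}(t)$ has diagonal $\bigl(\|\partial_xU\|_{\mathcal{H}}^2,\ \|U_1\|_{\mathcal{H}}^2,\ \|U_2\|_{\mathcal{H}}^2\bigr)$; the $(2,3)$ and $(3,2)$ entries vanish identically because $U_1=(u_1,0)$ and $U_2=(0,u_2)$ involve disjoint components, and the remaining entries are $\mathcal{O}(\|V\|_{\mathcal{H}}+\|W\|_{\mathcal{H}})$ --- the computation is the same as for the Jacobian in the proof of Lemma~\ref{Lemma-xith}, with the $x$-derivatives moved off $V$, $W$ onto $\partial_xU$, $U_1$, $U_2$ by integration by parts. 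By \eqref{inf3} and Lemma~\ref{Lemma-xith} one has $\|V\|_{\mathcal{H}}+\|W\|_{\mathcal{H}}\le C\varepsilon$, so for $\varepsilon$ small $\mathbf{A}(t)$ is invertible with $\|\mathbf{A}(t)^{-1}\|\le C$ uniformly in $t$. The entries of $\mathbf{b}(t)$ are the $\mathcal{H}$-pairings of $\partial_xU$, $U_1$, $U_2$ with $L_-W$, $-L_+V$, $\mathcal{N}_V$ and $\mathcal{N}_W$. For the linear contributions I would use $\langle\Psi,\Phi\rangle_{\mathcal{H}}=(K\Psi,\Phi)_{L^2}$ with $K$ as in \eqref{K} and the $L^2$-self-adjointness of $L_\pm$ and $K$ to rewrite, for instance, $\langle L_-W,\partial_xU\rangle_{\mathcal{H}}=(W,\,L_-K\partial_xU)_{L^2}$ and $\langle L_+V,U_1\rangle_{\mathcal{H}}=(V,\,L_+KU_1)_{L^2}$; a direct computation using \eqref{Evarphi} and properties (b),(d) shows that $L_-K\partial_xU$, $L_+KU_1$, $L_+KU_2$ are smooth and decay exponentially at $\pm\infty$ (for instance $KU_1=(\gamma u_1(1-u_1^2-u_2^2),0)$), hence lie in $L^1(\R)\cap L^2(\R)$, and, combined with the at-most-$|x|^{1/2}$ growth of $V$ and $W$ controlled by $\|\cdot\|_{\mathcal{H}}$, these pairings are bounded by $C(\|V\|_{\mathcal{H}}+\|W\|_{\mathcal{H}})\le C\varepsilon$. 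For the $\mathcal{N}$-pairings I would move one $x$-derivative onto $\partial_xU$, $U_1$, $U_2$ by integration by parts; since $|\mathcal{N}_V|+|\mathcal{N}_W|\le C(|V|^2+|W|^2)$ in the small-perturbation regime and the resulting weights inherit the exponential decay of the domain-wall derivatives, matching this decay against the vanishing of $1-u_j^2$ in \eqref{weighted-H} component by component yields a contribution $\mathcal{O}(\|V\|_{\mathcal{H}}^2+\|W\|_{\mathcal{H}}^2)$. Altogether $|\mathbf{b}(t)|\le C\varepsilon$, so $|\dot\alpha(t)|+|\dot\theta_1(t)|+|\dot\theta_2(t)|\le C\varepsilon$ for all $t$; continuity of $t\mapsto\mathbf{A}(t)^{-1}\mathbf{b}(t)$, through continuity of the flow in $\mathcal{H}$, then upgrades $\alpha$, $\theta_1$, $\theta_2$ to $C^1$ functions of $t$.

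At $t=0$, Lemma~\ref{Lemma-xith} yields representatives with $|\alpha(0)|+|\theta_1(0)|+|\theta_2(0)|\le C\varepsilon$, and $\theta_1$, $\theta_2$ are then extended continuously; integrating the bound above gives
\[
|\alpha(t)|\ \le\ |\alpha(0)|+\int_0^{|t|}|\dot\alpha(s)|\,ds\ \le\ C\varepsilon+C\varepsilon|t|\ \le\ C\varepsilon\max\{1,|t|\},
\]
and likewise for $\theta_1$ and $\theta_2$, which is \eqref{bound-time-per}. I expect the main obstacle to be the bound on $\mathbf{b}(t)$: the linear terms $\langle L_-W,\cdot\rangle_{\mathcal{H}}$ and $\langle L_+V,\cdot\rangle_{\mathcal{H}}$ are only $\mathcal{O}(\varepsilon)$ rather than $\mathcal{O}(\varepsilon^2)$ --- which is exactly what forces the linear-in-$|t|$ growth in \eqref{bound-time-per} --- and, because $V$ and $W$ are controlled only in the degenerate weighted norm $\|\cdot\|_{\mathcal{H}}$, with $W\notin L^2(\R)$ in general (cf. Remark~\ref{remark-real}), these pairings cannot be estimated directly: one has to transfer the operators $L_\pm$ and $K$ onto the smooth, exponentially localized functions $\partial_xU$, $U_1$, $U_2$ via $L^2$-self-adjointness and then balance, component by component, the exponential decay of the domain-wall derivatives against the degeneration of $1-u_j^2$ at the appropriate infinity.
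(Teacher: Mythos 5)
Your proposal follows essentially the same route as the paper: differentiate the decomposition and the orthogonality conditions, obtain a near-diagonal linear system for $(\dot\alpha,\dot\theta_1,\dot\theta_2)$ with a uniformly bounded inverse, bound the right-hand side by $C\varepsilon$ (linear pairings $\mathcal{O}(\varepsilon)$, nonlinear remainders $\mathcal{O}(\varepsilon^2)$ by balancing the exponential localization of $\partial_x U$, $U_1$, $U_2$ against the degenerate weight), and integrate in time. Your explicit transfer of the operators onto the localized profiles via self-adjointness (e.g.\ $KU_1=(\gamma u_1(1-u_1^2-u_2^2),0)$) is a slightly more detailed justification of the linear-term estimate than the paper's appeal to Cauchy--Schwarz, but the argument is the same in substance.
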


\begin{proof}
If $\Psi(t)$ is a global solution to the coupled GP equations \eqref{GP}
in $\mathcal{D} \cap L^{\infty}(\R)$, it is easy to verify that the map $t \mapsto \Psi(t)$
is continuously differentiable in the topology of $H^{-1}(\R)$.
Thanks to the smoothness and decay of $\partial_x U$, $U_1$, and $U_2$,
for all $(\alpha,\theta_1,\theta_2) \in \R^3$, the scalar products
\[
 \langle {\rm Re}(\Psi_{\alpha(t),\theta_1(t),\theta_2(t)}(t)-U), \partial_x U \rangle_{\mathcal{H}}, \;
  \langle {\rm Im} \Psi_{\alpha(t),\theta_1(t),\theta_2(t)}(t), U_1 \rangle_{\mathcal{H}}, \;
  \langle {\rm Im} \Psi_{\alpha(t),\theta_1(t),\theta_2(t)}(t), U_2 \rangle_{\mathcal{H}}
\]
are continuously differentiable functions of time. Thus, if
assumption \eqref{inf3} holds for all times, the proof of
Lemma~\ref{Lemma-xith} shows that the modulations parameters
$\alpha(t)$, $\theta_1(t)$, and $\theta_2(t)$ in the decomposition \eqref{decomposition2} and
\eqref{projections2} are $C^1$ functions of time.

Differentiating both sides of \eqref{decomposition2} and using \eqref{GP},
we obtain the evolution system
\[
  \left\{\!\!\begin{array}{l}
  ~\,\,V_t \,=\, L_- W + \dot{\alpha} (\partial_x U + \partial_x V) - \dot{\theta}_1 W_1 - \dot{\theta}_2 W_2 + \mathcal{E}_-(V,W), \\
  -W_t \,=\, L_+ V -\dot{\alpha} \partial_x W - \dot{\theta}_1 (U + V)_1 - \dot{\theta}_2 (U + V)_2 + \mathcal{E}_+(V,W), \end{array} \right.
\]
where the operators $L_\pm$ are defined in \eqref{L-plus} and \eqref{L-minus}
and $\mathcal{E}_{\pm}(V,W)$ contain quadratic and cubic terms in $(V,W)$,
which are not important for further estimates.
Using the orthogonality conditions \eqref{projections2}, we
eliminate the time derivatives $V_t$ and $W_t$ by taking the corresponding projections in $\mathcal{H}$.
This gives the following linear system for the derivatives
$\dot{\alpha}$, $\dot{\theta}_1$ and $\dot{\theta}_2$:
\begin{equation}
\label{Bsys}
  B \begin{pmatrix} \dot{\alpha} \\ \dot{\theta}_1 \\ \dot{\theta}_2 \end{pmatrix}
  \,=\, \begin{pmatrix} \langle L_- W, \partial_x U \rangle_{\mathcal{H}} \\
  \langle L_+ V, U_1 \rangle_{\mathcal{H}} \\  \langle L_+ V, U_2 \rangle_{\mathcal{H}} \end{pmatrix} \,+\,
\begin{pmatrix}
  \langle \mathcal{E}_-(V,W), \partial_x U \rangle_{\mathcal{H}} \\
  \langle \mathcal{E}_+(V,W), U_1 \rangle_{\mathcal{H}} \\
   \langle \mathcal{E}_+(V,W), U_2 \rangle_{\mathcal{H}}
\end{pmatrix},
\end{equation}
where
\begin{eqnarray}
B =
  \begin{pmatrix} -\| \partial_x U \|^2_{\mathcal{H}} & 0 & 0 \\
  0 & \| U_1 \|^2_{\mathcal{H}} & 0 \\
  0 & 0 & \| U_2 \|^2_{\mathcal{H}}
  \end{pmatrix} +   \begin{pmatrix} -\langle \partial_x V, \partial_x U \rangle_{\mathcal{H}} &
\langle W_1, \partial_x U \rangle_{\mathcal{H}} &
\langle W_2, \partial_x U \rangle_{\mathcal{H}} \\
  \langle \partial_x W, U_1 \rangle_{\mathcal{H}} & \langle V_1, U_1 \rangle_{\mathcal{H}} &
  \langle V_2, U_1 \rangle_{\mathcal{H}} \\
 \langle \partial_x W, U_2 \rangle_{\mathcal{H}} & \langle V_1, U_2 \rangle_{\mathcal{H}} &
 V_2, U_2 \rangle_{\mathcal{H}},
  \end{pmatrix}.
  \label{BBdef}
\end{eqnarray}
As in the proof of Lemma~\ref{Lemma-xith}, it is easy to verify by using
\eqref{inf3} and the Cauchy--Schwarz inequality that the second term in $B$
is bounded in the matrix norm by $C \varepsilon$ for some positive $\varepsilon$-independent constant
$C$. Since the first term in $B$ is a diagonal matrix with nonzero entries independently of $\varepsilon$,
the matrix $B$ is invertible with an uniformly bounded inverse if $\varepsilon$ is small enough.

Let us show that the second term in the right-hand side of \eqref{Bsys}
is of size $\mathcal{O}(\varepsilon^2)$. It is sufficient to consider few particular quadratic and cubic terms
in $\langle \mathcal{E}_+(V,W), U_1 \rangle_{\mathcal{H}}$ such as
$\langle UV^2, U_1 \rangle_{\mathcal{H}}$ and $\langle V^3, U_1 \rangle_{\mathcal{H}}$.
For the quadratic term, we obtain by integration by parts
\begin{eqnarray*}
\left| \int_{\mathbb{R}} (\partial_x u_1 v_1^2) (\partial_x u_1) dx \right| =
\left| \int_{\mathbb{R}} u_1 v_1^2  u_1'' dx \right| \leq
C \int_{\mathbb{R}} u_1 v_1^2 (1- u_1^2) dx \leq C \| V \|_{\mathcal{H}}^2 \leq C \varepsilon^2,
\end{eqnarray*}
where we have used the bound $|u_1''(x)| \leq C (1-u_1^2(x))$ for every $x \in \mathbb{R}$ and some $C > 0$,
that follows from properties (b) and (d). Similarly, we have
\begin{eqnarray*}
\left| \int_{\mathbb{R}} (1-u_1^2) u_1^2 v_1^2 dx \right| \leq C \| V \|_{\mathcal{H}}^2 \leq C \varepsilon^2.
\end{eqnarray*}
For the cubic term, we obtain by using the same bound for $|u_1'(x)|$ and the Cauchy--Schwarz inequality
\begin{eqnarray*}
\left| \int_{\mathbb{R}} (\partial_x v_1^3) (\partial_x u_1) dx \right|
& \leq & C \int_{\mathbb{R}} v_1^2 |\partial_x v_1| (1- u_1^2) dx \\
& \leq & C \| v_1 (1 - u_1^2)^{1/2} \|_{L^{\infty}} \int_{\mathbb{R}} |v_1| |\partial_x v_1| (1- u_1^2)^{1/2} dx \\
& \leq & C \| v_1 (1 - u_1^2)^{1/2} \|_{L^{\infty}} \| \partial_x v_1 \|_{L^2} \| (1- u_1^2)^{1/2} v_1 \|_{L^2} \\
& \leq & C \| V \|_{\mathcal{H}}^3 \leq C \varepsilon^3,
\end{eqnarray*}
where we have used the Sobolev embedding $\| (1 - u_1^2)^{1/2} v_1 \|_{L^{\infty}} \leq C \| (1 - u_1^2)^{1/2} v_1  \|_{H^1}$
and the elementary inequality
$$
\| (1 - u_1^2)^{1/2} v_1 \|_{H^1}^2 \leq \| \partial_x v_1 \|_{L^2}^2 + \left\| \frac{u_1 u_1'}{(1-u_1^2)^{1/2}} v_1 \right\|_{L^2}^2
+ \| (1-u_1^2)^{1/2} v_1 \|_{L^2}^2 \leq C \| V \|_{\mathcal{H}}^2,
$$
due to the same bound for $|u_1'(x)|$. Similarly, we obtain
\begin{eqnarray*}
\left| \int_{\mathbb{R}} (1-u_1^2) v_1^3 u_1 dx \right| & \leq &
C \int_{\mathbb{R}} v_1^2 |\partial_x v_1| (1- u_1^2) dx \\
& \leq & C \| (1 - u_1^2)^{1/2} v_1 \|_{L^{\infty}} \| (1- u_1^2)^{1/4} v_1 \|_{L^2}^2 \\
& \leq & C \| V \|_{\mathcal{H}}^3 \leq C \varepsilon^3,
\end{eqnarray*}
where we have used for every $\alpha > 0$ and every $x_0 \in \mathbb{R}$ that
$$
\| (1- u_1^2)^{\alpha} v_1 \|_{L^2(-\infty,x_0)} \leq \| v_1 \|_{L^2(-\infty,x_0)} \leq \| V \|_{\mathcal{H}}
$$
and
$$
\| (1- u_1^2)^{\alpha} v_1 \|_{L^2(x_0,\infty)} \leq C \| V \|_{\mathcal{H}},
$$
where the latter bound is due to the exponential decay of $1 - u_1^2(x)$ to zero as $x \to +\infty$ and
the slow growth of $v_1(x)$ as follows
$$
|v_1(x)| \leq |v_1(x_0)| + \| \partial_x v_1 \|_{L^2} |x-x_0|^{1/2} \leq C \| V \|_{\mathcal{H}} (1 + |x-x_0|^{1/2}).
$$
By using similar estimates for other quadratic and cubic terms, we verify that
the second term in the right-hand side of \eqref{Bsys} is of size $\mathcal{O}(\varepsilon^2)$.
On the other hand, the first term in the right-hand side of \eqref{Bsys} is of size $\mathcal{O}(\varepsilon)$
by the Cauchy--Schwarz inequality.

It follows from \eqref{Bsys} and \eqref{BBdef} by inverting $B$ and estimating the right-hand-side as above
that $|\dot{\alpha}(t)| + |\dot{\theta}_1(t)| + |\dot{\theta}_2(t)| \le C\varepsilon$ for all $t \in \R$,
where the positive $\varepsilon$-independent constant $C$ is also independent of $t$. This concludes
the proof of the bound \eqref{bound-time-per}.
\end{proof}

\section{Proof of Theorem \ref{Theorem-main}}

The energy estimates of Section 4 and the modulation equations of Section 5 are sufficient for the proof
of Theorem \ref{Theorem-main}. If $\Psi(t)$ is a solution of \eqref{GP} in $\mathcal{D} \cap L^{\infty}(\R)$
starting with the initial data $\Psi_0 \in \mathcal{D} \cap L^{\infty}(\R)$, which is close to the domain
walls in the sense of the bound (\ref{bound-initial}), then we introduce the modulation parameters
according to the decomposition (\ref{decomposition2}) and (\ref{projections2}) which are defined
by Lemma \ref{Lemma-xith} at least for small values of time $t$. Then, thanks to the translation and gauge
invariance, we define the conserved energy function
\begin{equation}
E(\Psi) = E(\Psi_{\alpha(t),\theta_1(t),\theta_2(t)}) = E(U+V+iW)
\end{equation}
and use the energy estimates (\ref{control}) to control the proximity of the solution
from the domain wall $U$ in the sense of the distance (\ref{distance-R}). By the estimate (\ref{inital-energy}),
we can choose $\delta = \mathcal{O}(\nu e^{-3 \kappa R})$ in the initial bound (\ref{bound-initial}),
where $\nu$ is defined in (\ref{ball}). Then, by (\ref{control}), we can choose
$\varepsilon = \mathcal{O}(\nu e^{-3 \kappa R})$ in the bound (\ref{bound-final}) for all times.
This construction extends the definition of modulation parameters $\alpha$, $\theta_1$, and $\theta_2$
in the decomposition (\ref{decomposition2}) and (\ref{projections2}) to all times. Then, Lemma \ref{difflem}
yields the control of the evolution of the modulation parameters with at most linear growth in
time $t$, according to \eqref{bound-time-per}. Theorem \ref{Theorem-main} is proved.

\appendix

\section{Continuation of eigenvalues in the spectral problem (\ref{spectral-problem-R})}

In this appendix, we prove the continuity of eigenvalues of the spectral problem (\ref{spectral-problem-R})
as $R \to \infty$. This result is needed for the proof of coercivity of the operator $L_R$ in
$\mathcal{H}$ subject to a single orthogonality condition, see bound (\ref{coercivity-L-R}) in
Lemma \ref{lem-coercivity-L-R}.
For reader's convenience, we write the spectral problem (\ref{spectral-problem-R}) again:
\be\label{1}
L_R \Psi = \lambda K \Psi, \quad \Psi \in \mathcal{H}.
\ee
The formal limit as $R \to \infty$ is given by the spectral problem
(\ref{spectral-problem-L-plus}), which is written as
\be\label{2}
L_+ \Psi = \lambda K \Psi, \quad \Psi \in H^1(\mathbb{R}).
\ee

The following theorem ensures that the isolated eigenvalues of the spectral problem
(\ref{2}) below the point $\lambda_0 = 1$ are continued as the eigenvalues of
the spectral problem (\ref{1}) for sufficiently large $R > 0$.

\renewcommand{\theTheorem}{A}

\begin{Theorem}\label{Theo1}
For some $N \in \N$, suppose that the spectral problem \eqref{2} has the first $N$
smallest eigenvalues below $\lambda_0 = 1$, which are ranked in the ascending order as follows:
\be\label{3}
\lambda_1^{\infty} < \lambda_2^{\infty} \le  \dots \le \lambda_N^{\infty} < 1,
\ee
counting by multiplicity. Then, for $R > 0$ sufficiently large,
the spectral problem \eqref{1} has the first $N$ smallest eigenvalues below $\lambda_0 = 1$,
which are also ranked in the ascending order as follows:
\be\label{4}
\lambda_1^{R} < \lambda_2^{R} \le  \dots \le \lambda_N^R < 1,
\ee
with the following convergence property
\be\label{5}
\lim\limits_{R \to \infty} \lambda_n^{R} = \lambda_n^{\infty} \quad (n = 1, \dots, N).
\ee
Moreover, if $\Phi_n^R = {{\varphi_n^R}\choose{\psi_n^R}}  \in \mathcal{H} ~ (n = 1, \dots, N)$
denotes eigenfunctions associated with $\lambda_1^R, \dots, \lambda_n^R$, normalized by
\be\label{6}
\langle \Phi_n^R, \Phi_m^R \rangle_{\mathcal{H}} = \delta_{nm},
\ee
then there exist linearly independent eigenfunctions $\Phi_n^{\infty} = {{\varphi_n^{\infty}}\choose{\psi_n^{\infty}}}  \in
H^1(\R) ~ (n  = 1, \dots, N)$ associated with $\lambda_1^{\infty}, \dots, \lambda_n^{\infty}$ such that
\be\label{7}
\Phi_n^{R_j} \stackrel{j \to \infty}{\rightharpoonup} \Phi_n^{\infty} \text{ weakly in } \mathcal{H} \quad (n = 1, \dots, N)
\ee
for some sequence $R_j \to \infty$.
\end{Theorem}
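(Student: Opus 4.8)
The plan is to treat \eqref{1} and \eqref{2} as generalized eigenvalue problems for the self-adjoint pencils $(L_R,K)$ on $\mathcal{H}$ and $(L_+,K)$ on $H^1(\R)$, using $(K\Psi,\Psi)_{L^2}=\|\Psi\|_{\mathcal{H}}^2$, the identity \eqref{operator-L-R-T}, and the decomposition $L_R=L_+-2\chi_{(-\infty,-R)\cup(R,\infty)}\left(\begin{smallmatrix}u_1^2 & \gamma u_1u_2\\ \gamma u_1u_2 & u_2^2\end{smallmatrix}\right)$ from \eqref{R}. Since $T_R$ is compact on $\mathcal{H}$, $\mathcal{L}_R=I-T_R$ has discrete spectrum away from $\lambda_0=1$, and its eigenvalues below $1$ are given by the Courant--Fischer formula
\be\label{pf-minmax}
\lambda_n^R=\min_{\substack{V\subseteq\mathcal{H}\\ \dim V=n}}\ \max_{\substack{\Psi\in V\\ \Psi\neq 0}}\ \frac{(L_R\Psi,\Psi)_{L^2}}{\|\Psi\|_{\mathcal{H}}^2};
\ee
since $\sigma_{ess}(\mathcal{L}_+)=[1,\infty)$ (as recalled in the proof of Lemma~\ref{lem-coercivity-L-R}), the analogous formula with $V\subseteq H^1(\R)$ and $L_+$ in place of $L_R$ computes the isolated eigenvalues $\lambda_n^\infty$ below $1$.

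First I would prove upper semicontinuity, $\limsup_{R\to\infty}\lambda_n^R\le\lambda_n^\infty$. Take $\mathcal{H}$-orthonormal eigenfunctions $\Phi_1^\infty,\dots,\Phi_N^\infty\in H^1(\R)$ of \eqref{2} and use the span of $\Phi_1^\infty,\dots,\Phi_n^\infty$ as trial space in \eqref{pf-minmax}. By \eqref{R}, for $\Psi$ in this span,
\[
(L_R\Psi,\Psi)_{L^2}=(L_+\Psi,\Psi)_{L^2}-2\int_{|x|>R}\bigl(u_1^2|\psi_1|^2+2\gamma u_1u_2\,\Re(\psi_1\bar\psi_2)+u_2^2|\psi_2|^2\bigr)\,dx,
\]
and since the trial space is finite-dimensional and contained in $H^1(\R)$, the tail integral is $o(1)\,\|\Psi\|_{\mathcal{H}}^2$ as $R\to\infty$, uniformly on it; on the span the leading term gives Rayleigh quotient $\le\lambda_n^\infty$. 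Hence $\lambda_n^R\le\lambda_n^\infty+o(1)$, which shows \eqref{1} has at least $N$ eigenvalues below $1$ for $R$ large and, since $\lambda_1^\infty<\lambda_2^\infty$, gives the ordering \eqref{4}.

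For the reverse inequality I would argue by weak compactness. With the eigenfunctions of \eqref{1} normalized by \eqref{6}, the families $\{\Phi_n^R\}_R$ are bounded in $\mathcal{H}$, so along a subsequence $R_j\to\infty$ we get $\Phi_n^{R_j}\rightharpoonup\Phi_n^\infty$ weakly in $\mathcal{H}$ and $\lambda_n^{R_j}\to\bar\lambda_n$ for each $n=1,\dots,N$, with $\bar\lambda_n\le\lambda_n^\infty$ by the previous step and $\bar\lambda_n$ bounded below (from the $R$-independent estimate $(L_R\Psi,\Psi)_{L^2}\ge -C\|\Psi\|_{\mathcal{H}}^2$, obtained directly using that the potential of $L_R$ is positive near $\pm\infty$ and only exponentially small on $|x|>R$ in the remaining direction, the bulk of $L_+$ contributing the fixed constant). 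On any fixed bounded interval $L_{R_j}$ coincides with $L_+$ once $R_j$ is large, and $\Phi_n^{R_j}\to\Phi_n^\infty$ strongly in $H^1_{loc}(\R)$ by elliptic estimates (on compacts $\|\cdot\|_{\mathcal{H}}$ dominates $\|\cdot\|_{H^1}$); passing to the limit in the weak form of $L_{R_j}\Phi_n^{R_j}=\lambda_n^{R_j}K\Phi_n^{R_j}$ against $C_c^\infty$ test functions gives $L_+\Phi_n^\infty=\bar\lambda_n K\Phi_n^\infty$ in $\mathcal{D}'(\R)$. Because $\bar\lambda_n<1$, the constant-coefficient systems obtained from this equation as $x\to\pm\infty$ have an exponential dichotomy whose growing solutions are excluded by $\Phi_n^\infty\in\mathcal{H}$, so $\Phi_n^\infty\in H^1(\R)$ and solves \eqref{2} for $\bar\lambda_n$; this is the convergence \eqref{7}.

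The main obstacle is to show that the limits $\Phi_n^\infty$ are linearly independent, equivalently that no $\mathcal{H}$-mass of the eigenfunctions escapes to spatial infinity as $R\to\infty$. Here the block structure of $L_R$ outside $[-R,R]$ is essential: for $x>R$ the diagonal potential acting on $\psi_2$ tends to $\gamma-1>0$ (and symmetrically for $\psi_1$ when $x<-R$), so that component decays exponentially at a rate uniform in $R$, while the other component feels only an exponentially small potential there and is controlled by matching across $x=\pm R$ to the region $[-R,R]$, on which $L_R=L_+$ is nonnegative and where the normalization $\|\Phi_n^R\|_{\mathcal{H}}=1$ together with the uniform gap $1-\lambda_n^R\ge c>0$ bound its contribution. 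This should yield the tightness estimate $\sup_R\|\Phi_n^R\|_{\mathcal{H}(|x|>L)}\to0$ as $L\to\infty$, which upgrades weak convergence to $\langle\Phi_n^{R_j},\Phi_m^{R_j}\rangle_{\mathcal{H}}\to\langle\Phi_n^\infty,\Phi_m^\infty\rangle_{\mathcal{H}}=\delta_{nm}$. Then $\Phi_1^\infty,\dots,\Phi_N^\infty$ span an $N$-dimensional space of eigenfunctions of \eqref{2} on which the Rayleigh quotient of $(L_+,K)$ takes the values $\bar\lambda_1\le\dots\le\bar\lambda_N$, so the min--max characterization of \eqref{2} forces $\lambda_n^\infty\le\bar\lambda_n$; combined with $\bar\lambda_n\le\lambda_n^\infty$ this gives $\bar\lambda_n=\lambda_n^\infty$. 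Since every subsequence of $(\lambda_n^R)_R$ has a further subsequence with this limit, the full convergence \eqref{5} follows. The tightness/no-mass-escape step is the technical heart of the proof.
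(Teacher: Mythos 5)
Your overall architecture matches the paper's: a min--max upper bound $\limsup_R\lambda_n^R\le\lambda_n^\infty$ (the paper's Lemma A.1, which proves the quadratic-form inequality for all of $H^1(\R)$ rather than on a trial span, but the effect is the same), weak compactness of the normalized eigenfunctions, identification of the weak limits as eigenfunctions of \eqref{2}, and then linear independence of the limits as the decisive step that forces $\hat\lambda_n\ge\lambda_n^\infty$ and hence \eqref{5}. Your ODE-dichotomy argument for $\Phi_n^\infty\in H^1(\R)$ is a legitimate alternative to the paper's Lemma A.3 (the limit satisfies the $L_+$-equation, whose asymptotic systems at $\pm\infty$ are hyperbolic for $\lambda<1$, and membership in $\mathcal{H}$ excludes the growing modes). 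But the step you yourself call ``the technical heart'' --- linear independence of the weak limits --- is only asserted, and the sketch you give does not close. You claim a uniform tightness estimate $\sup_R\|\Phi_n^R\|_{\mathcal{H}(|x|>L)}\to0$ by ``matching across $x=\pm R$,'' but the normalization \eqref{6} controls the non-decaying component (e.g.\ $\varphi_n^R$ on $(0,R)$) only in the exponentially weighted norm $\int(1-u_1^2)(\varphi_n^R)^2\,dx$, so pointwise data at $x=\pm L$ or $x=\pm R$ is not a priori bounded and no matching argument gets off the ground; moreover the derivative mass $\int_L^\infty(\varphi_n^R{}')^2\,dx$ is not addressed at all. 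The paper needs a dedicated energy estimate (Lemma A.3) just to obtain the uniform \emph{unweighted} bound $\int_0^R(\varphi_n^R)^2\,dx\le C$, and even then it does not prove tightness.

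What the paper actually does for linear independence is different in kind from your proposal and is worth internalizing: it argues by contradiction via an IMS-type localization. If a nontrivial combination $\Phi^{(k)}=\sum_n\alpha_n\Phi_n^{R_{j_k}}$ converged weakly to $0$, it is split with cutoffs $S,C$ ($S^2+C^2=1$) supported outside/inside $[-x_0,x_0]$; Lemma A.4 shows that outside a sufficiently large compact set the quadratic form $\mathcal{Q}_R$ is bounded below by $(1-\eta)\|\cdot\|_{\mathcal{H}}^2$ uniformly in $R$ (using only the exponential decay of $1-u_1+u_2$ at $+\infty$ and a Hardy-type integration by parts), while the inner piece $C\Phi^{(k)}$ tends to $0$ strongly in $L^2$ by compact embedding. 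In either case ($\|(C\Phi^{(k)})'\|_{L^2}$ bounded away from $0$ or not) one deduces $\mathcal{Q}_{R}(\Phi^{(k)},\Phi^{(k)})\ge1-3\eta$, contradicting $\mathcal{Q}_{R_{j_k}}(\Phi^{(k)},\Phi^{(k)})\le\lambda_N^{R_{j_k}}\le1-4\eta+o(1)$ with $\eta=\tfrac14(1-\hat\lambda_N)$. This spectral-gap argument circumvents any uniform decay estimate for the eigenfunctions. To complete your proof you would either have to reproduce something of this sort, or genuinely establish the uniform-in-$R$ decay you claim, which is substantially harder than the two sentences you devote to it.
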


The proof of the theorem is subdivided into several Lemmas. We denote by $\mathcal{Q}_R$
and $\mathcal{Q}_{\infty}$ the bilinear forms (on $\mathcal{H}$ and $H^1(\R)$, resp.) defined
by the left-hand sides of problem \eqref{1} and \eqref{2}, respectively. We also introduce the following
matrix potentials:
$$
M_- := \begin{bmatrix} u_1^2 + \gamma u_2^2 - 1 & 0 \\ 0 & \gamma u_1^2 + u_2^2 - 1 \end{bmatrix}, \quad
M_+ := 2\begin{bmatrix} u_1^2 & \gamma u_1 u_2\\  \gamma u_1 u_2 & u_2^2 \end{bmatrix},
$$
and
$$
M_K := (\gamma-1) \begin{bmatrix} 1-u_1^2 & 0 \\ 0 & 1-u_2^2 \end{bmatrix}.
$$

\renewcommand{\theLemma}{A.1}

\begin{Lemma}\label{Lem1}
For $R$ sufficiently large, problem \eqref{1} has at least $N$ eigenvalues below $\lambda_0 = 1$,
ordered as in \eqref{4}, and we have
\be\label{8}
\limsup\limits_{R \to \infty} \lambda_n^R \le \lambda_n^{\infty} \quad (n = 1, \dots, N).
\ee
\end{Lemma}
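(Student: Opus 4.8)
The plan is to establish the upper bound \eqref{8} by a standard variational (min-max) comparison between the two quadratic forms, exploiting that $\mathcal{Q}_R$ and $\mathcal{Q}_\infty$ differ only by the localized potential term $M_+$ restricted to $|x| \geq R$, whose contribution becomes negligible when tested against fixed, sufficiently decaying trial functions. First I would fix $N$ linearly independent eigenfunctions $\Phi_1^\infty, \dots, \Phi_N^\infty \in H^1(\R)$ of \eqref{2} associated with $\lambda_1^\infty, \dots, \lambda_N^\infty$, chosen orthonormal in $\mathcal{H}$. These eigenfunctions decay exponentially at infinity: this follows because the potentials in $L_+$ and $K$ converge at $\pm\infty$ and $\lambda_n^\infty < 1$ lies strictly below the essential spectrum, so an Agmon-type or ODE decay estimate applies. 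Let $E_N := \mathrm{span}\{\Phi_1^\infty, \dots, \Phi_N^\infty\}$, a fixed $N$-dimensional subspace of $H^1(\R) \subset \mathcal{H}$.

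Next I would compute, for $\Psi \in E_N$, the difference
\be
\mathcal{Q}_R(\Psi,\Psi) - \mathcal{Q}_\infty(\Psi,\Psi) = -2\int_{|x| \geq R} \left( u_1^2 |\psi_1|^2 + \gamma u_1 u_2 (\psi_1 \bar\psi_2 + \psi_2 \bar\psi_1) + u_2^2 |\psi_2|^2 \right) dx,
\ee
which follows directly from the second form of $L_R$ in \eqref{R}. Since every $\Psi \in E_N$ satisfies $\|\Psi\|_{L^\infty(|x|\geq R)} \leq C_N e^{-c R}$ by the exponential decay of the basis functions (here $C_N$ depends on the fixed basis and $c > 0$ on the decay rate), and since $u_1, u_2$ are bounded by property (b), this difference is bounded in absolute value by $C_N e^{-cR} \|\Psi\|_{\mathcal{H}}^2$; I would also need the analogous smallness for the denominator, but $(K\Psi,\Psi)_{L^2} = \|\Psi\|_{\mathcal{H}}^2$ is exactly the $\mathcal{H}$-norm and is unaffected. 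Therefore, for $\Psi \in E_N$,
\be
\frac{\mathcal{Q}_R(\Psi,\Psi)}{\|\Psi\|_{\mathcal{H}}^2} \leq \frac{\mathcal{Q}_\infty(\Psi,\Psi)}{\|\Psi\|_{\mathcal{H}}^2} + C_N e^{-cR} \leq \lambda_N^\infty + C_N e^{-cR},
\ee
where the last inequality uses that $E_N$ is spanned by eigenfunctions with eigenvalues $\leq \lambda_N^\infty$.

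Then I would invoke the Courant–Fischer min-max characterization for the operator $\mathcal{L}_R$ in $\mathcal{H}$: since $\mathcal{L}_R = I - T_R$ with $T_R$ compact, its spectrum below $\lambda_0 = 1$ consists of isolated eigenvalues of finite multiplicity, and
\be
\lambda_n^R = \min_{\substack{F \subset \mathcal{H} \\ \dim F = n}} \ \max_{\substack{\Psi \in F \\ \Psi \neq 0}} \frac{\mathcal{Q}_R(\Psi,\Psi)}{\|\Psi\|_{\mathcal{H}}^2}
\ee
as long as the right-hand side stays below $1$. Using the fixed trial subspaces $F = \mathrm{span}\{\Phi_1^\infty, \dots, \Phi_n^\infty\}$ for each $n = 1, \dots, N$, the estimate above gives $\lambda_n^R \leq \lambda_N^\infty + C_N e^{-cR} < 1$ for $R$ large; this simultaneously shows that $\mathcal{L}_R$ has at least $N$ eigenvalues below $1$ (so \eqref{4} makes sense) and, more sharply, testing with the $n$-dimensional subspace yields $\lambda_n^R \leq \lambda_n^\infty + C_n e^{-cR}$, whence $\limsup_{R\to\infty} \lambda_n^R \leq \lambda_n^\infty$ for each $n$. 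The main obstacle I anticipate is the justification of the exponential decay of the limiting eigenfunctions $\Phi_n^\infty$ — one must be careful that the system \eqref{2} genuinely has eigenvalues strictly separated from the essential spectrum $[1,\infty)$ and deduce the pointwise decay rigorously (via energy estimates for the ODE system, or an abstract exponential-decay theorem for Schrödinger-type operators); once that decay is in hand, the min-max comparison is routine, and care only needs to be taken to ensure the comparison subspaces remain admissible (i.e. that the max over them stays below $1$) for all large $R$.
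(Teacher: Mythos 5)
Your argument is correct and reaches \eqref{8} within the same min--max framework as the paper, but the key estimate is obtained by a genuinely different mechanism. The paper never uses the limiting eigenfunctions as trial functions: it proves the \emph{uniform} form inequality $\mathcal{Q}_R(\Phi,\Phi)\le\mathcal{Q}_\infty(\Phi,\Phi)+\tfrac{2\varepsilon}{\gamma-1}\|\Phi\|_{\mathcal{H}}^2$ for \emph{every} $\Phi\in H^1(\R)$ and $R$ large, by observing that on $[R,\infty)$ the quadratic form of $M_+$ is bounded below by $-2\varepsilon(1-u_2^2)\psi^2$ (because $u_2$ is exponentially small there), which in turn is dominated by $\tfrac{2\varepsilon}{\gamma-1}$ times the density of the $\mathcal{H}$-norm, with the symmetric estimate on $(-\infty,-R]$; the min--max over $H^1(\R)$-subspaces together with $H^1(\R)\subset\mathcal{H}$ then gives $\lambda_n^R\le\lambda_n^\infty+\tfrac{2\varepsilon}{\gamma-1}$, and $\varepsilon$ is arbitrary. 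Your route instead tests with the fixed subspace spanned by $\Phi_1^\infty,\dots,\Phi_n^\infty$ and controls the tail term $\int_{|x|\ge R}\Psi^T M_+\Psi\,dx$ via decay of these eigenfunctions. That is legitimate, and the exponential decay you flag as the main obstacle does hold (for $\lambda<1$ the system $-(1-\lambda)\Psi''+(M_-+M_+-\lambda M_K)\Psi=0$ has strictly positive limiting potential matrices at $\pm\infty$), but note that you do not actually need it: since $M_+$ is bounded and each $\Phi_n^\infty$ lies in $H^1(\R)\subset L^2(\R)$, the tail integral over a fixed finite-dimensional subspace is already $o(1)\cdot\|\Psi\|_{\mathcal{H}}^2$ as $R\to\infty$ by dominated convergence and the $\mathcal{H}$-orthonormality of the basis; you only lose the explicit rate $e^{-cR}$, which \eqref{8} does not require. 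The trade-off is that the paper's uniform comparison avoids any discussion of the limiting eigenfunctions, while your version yields a quantitative rate at the cost of the decay input; both handle the admissibility issue correctly by keeping the min--max values strictly below $1$, so that they are genuine eigenvalues of the compact perturbation $\mathcal{L}_R=I-T_R$.
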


\begin{proof}
For $n = 1, \dots, N$, define
\be\label{9}
\lambda_n^R :=
\inf\limits_{\scriptsize{\begin{array}{c} U \subset \mathcal{H} \text{ subspace} \\ \text{dim} U = n \end{array}}} ~
          \max\limits_{\Phi \in U \setminus \{ 0 \}} ~
          \frac{\mathcal{Q}_R  (\Phi, \Phi)}{\langle \Phi, \Phi \rangle_{\mathcal{H}}} ,
\ee
and let $\varepsilon \in (0,2)$ be fixed. For $R$ sufficiently large and any $\Phi : {{\varphi}\choose{\psi}} \in H^1(\R)$,
we have on $[R,+\infty)$:
\bn
\Phi^T M_+ \Phi \ge 2 \varphi^2 - \varepsilon (\varphi^2 + \psi^2) & \ge & - \varepsilon \psi^2 \ge - 2 \varepsilon (1-u_2^2) \psi^2 \\
& \ge & - \frac{2\varepsilon}{\gamma - 1} \left[ (\varphi')^2 + (\psi')^2 + (\varphi,\psi) M_K {{\varphi}\choose{\psi}} \right],
\en
and an analogous inequality on $(-\infty, -R]$. Hence, for all $\Phi \in H^1(\R) \setminus \{ 0 \}$ and $R$ sufficiently large,
\bn
\frac{\mathcal{Q}_R  (\Phi, \Phi)}{\langle \Phi, \Phi \rangle_{\mathcal{H}}} \le \frac{\mathcal{Q}_{\infty}  (\Phi, \Phi)}{\langle \Phi, \Phi \rangle_{\mathcal{H}}}  + \frac{2 \varepsilon}{\gamma - 1}.
\en
So the min-max-principle gives
\be\label{10}
\inf\limits_{\scriptsize{\begin{array}{c} U \subset H^1(\R) \text{ subspace} \\ \text{dim} U = n \end{array}}} ~
          \max\limits_{\Phi \in U \setminus \{ 0 \}} ~
          \frac{\mathcal{Q}_R  (\Phi, \Phi)}{\langle \Phi, \Phi \rangle_{\mathcal{H}}}
          \le \lambda_n^{\infty} + \frac{2 \varepsilon}{\gamma - 1}
\ee
for $n = 1, \dots, N$ and $R$ sufficiently large. Since $\mathcal{H} \supset H^1(\R)$,
then \eqref{9} and \eqref{10} imply that
\be\label{11}
\lambda_n^R \le \lambda_n^{\infty} + \frac{2 \varepsilon}{\gamma - 1} \quad (n = 1, \dots, N)
\ee
for $R$ sufficiently large. When $\varepsilon$ is small enough (such that the right-hand side of \eqref{11} is
less than $1$ for $n = N$), the min-max-principle shows that $\lambda_1^R, \dots, \lambda_N^R$ are indeed
the $N$ smallest {\it eigenvalues} of problem \eqref{1}, since the spectrum of \eqref{1} is discrete
for any $R > 0$. Finally, \eqref{11} gives
\bn
\limsup\limits_{R \to \infty} \lambda_n^R \le \lambda_n^{\infty} + \frac{2 \varepsilon}{\gamma - 1} \quad (n = 1, \dots, N)
\en
and hence the claim \eqref{8} since $\varepsilon \in (0,2)$ is arbitrary.
\end{proof}

\renewcommand{\theLemma}{A.2}
\begin{Lemma}\label{Lem2}
Suppose that for some sequence $(R_j) \to \infty$, the limits
\be\label{12}
\hat{\lambda}_n = \lim\limits_{j \to \infty} \lambda_n^{R_j} \quad (n = 1, \dots, N)
\ee
exist. Then, $\hat{\lambda}_1, \dots, \hat{\lambda}_N$ are eigenvalues of problem \eqref{2}, and
\be\label{13}
\hat{\lambda}_n \ge \lambda_n^{\infty} \quad (n = 1, \dots, N).
\ee
Moreover, with $\Phi_n^R \in \mathcal{H} ~ (n = 1, \dots, N)$ denoting eigenfunctions associated with $\lambda_n^R ~ (n = 1, \dots, N)$, normalized by \eqref{6}, there exist linearly independent eigenfunctions $\hat{\Phi}_1, \dots, \hat{\Phi}_N \in H^1(\R) ~ (n = 1, \dots, N)$ associated with $\hat{\lambda}_1, \dots, \hat{\lambda}_N$ such that, for some subsequence $(R_{j_k})$,
\be\label{14}
\Phi_n^{R_{j_k}}  \stackrel{k \to \infty}{\rightharpoonup} \hat{\Phi}_n \text{ weakly in } \mathcal{H} \quad (n = 1, \dots, N).
\ee
\end{Lemma}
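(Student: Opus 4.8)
The plan is to extract, along the given sequence $R_j \to \infty$, weak limits of the normalized eigenfunctions and to identify them as eigenfunctions of the limiting problem \eqref{2} with the limiting eigenvalues $\hat\lambda_n$; the inequality \eqref{13} will then follow from the min-max characterization \eqref{9}--\eqref{10} applied to the limiting problem. First I would fix $n$ and note that, by the normalization \eqref{6}, each family $(\Phi_n^{R_j})_j$ is bounded in $\mathcal{H}$, so after passing to a subsequence $(R_{j_k})$ (diagonalizing over $n=1,\dots,N$) we obtain weak limits $\hat\Phi_n \in \mathcal{H}$. The key point is that $\hat\Phi_n \in H^1(\R)$: since $\|\Phi_n^{R_j}\|_{\mathcal{H}}=1$, the derivative part $\|(\varphi_n^{R_j})',(\psi_n^{R_j})'\|_{L^2}$ is bounded, and I need to recover the missing $L^2$-bound on the functions themselves. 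This is exactly where the compact-support term in $L_R$ (equivalently, the second integral in \eqref{operator-R-T}) and the positivity of the eigenvalue equation should be used: from $L_{R_j}\Phi_n^{R_j} = \lambda_n^{R_j} K \Phi_n^{R_j}$ one reads off, away from the support $[-R_j,R_j]$, that the functions satisfy an equation with the $L_+$-type potential, whose quadratic form controls the $L^2$-norm; combined with the $H^1$-control on $[-R_j,R_j]$ from \eqref{embeddings-R}-type estimates and a local Rellich argument, this should give $\hat\Phi_n \in H^1(\R)$ and local strong convergence.

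Next I would pass to the limit in the eigenvalue equation. For any fixed test function $\Theta \in C_c^\infty(\R)^2$, the bilinear form $\mathcal{Q}_{R_j}(\Phi_n^{R_j},\Theta)$ converges to $\mathcal{Q}_\infty(\hat\Phi_n,\Theta)$: the $M_-$ and $M_K$ pieces pass to the limit by weak convergence in $\mathcal{H}$ together with the exponential decay of the coefficients (so the coefficients are in $L^\infty$ and tend to constants only at infinity, where $\Theta$ vanishes), and the $M_+\chi_{(-\infty,-R_j)\cup(R_j,\infty)}$ correction disappears because for $j$ large the support of $\Theta$ lies inside $[-R_j,R_j]$. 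On the right-hand side, $\lambda_n^{R_j}\langle K\Phi_n^{R_j},\Theta\rangle \to \hat\lambda_n \langle K\hat\Phi_n,\Theta\rangle$ by \eqref{12} and local strong convergence. Hence $\hat\Phi_n$ solves \eqref{2} with eigenvalue $\hat\lambda_n$, provided $\hat\Phi_n \neq 0$.

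To see $\hat\Phi_n \neq 0$ and, more importantly, that the $\hat\Phi_n$ are \emph{linearly independent}, I would use the $\mathcal{H}$-orthonormality \eqref{6} in a quantitative way. Weak convergence alone only gives $\langle \hat\Phi_n,\hat\Phi_m\rangle_{\mathcal{H}} \le \delta_{nm}$ in the limit, which is not enough. The remedy is a \emph{no-loss-of-mass} argument: the local strong convergence on every compact set, plus a uniform tightness estimate showing that the $\mathcal{H}$-mass of $\Phi_n^{R_j}$ outside $[-A,A]$ is small uniformly in $j$ for $A$ large (again obtained from the $L_+$-type coercivity outside the support and the exponential decay of $1-u_1^2-u_2^2$, together with $\lambda_n^{R_j}<1$), upgrades the convergence $\Phi_n^{R_{j_k}} \to \hat\Phi_n$ to strong convergence in $\mathcal{H}$; then \eqref{6} passes to the limit exactly, giving $\langle \hat\Phi_n,\hat\Phi_m\rangle_{\mathcal{H}} = \delta_{nm}$, hence linear independence. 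Finally, the inequality \eqref{13}: since $\hat\Phi_1,\dots,\hat\Phi_N$ are $N$ linearly independent eigenfunctions of \eqref{2} with eigenvalues $\hat\lambda_1 \le \dots \le \hat\lambda_N$, the span of $\hat\Phi_1,\dots,\hat\Phi_n$ is an $n$-dimensional subspace on which the Rayleigh quotient is at most $\hat\lambda_n$, so by \eqref{9} (the min-max for problem \eqref{2}) we get $\lambda_n^\infty \le \hat\lambda_n$ for each $n$, which is \eqref{13}. Combining with Lemma~\ref{Lem1} pins down $\hat\lambda_n = \lambda_n^\infty$, which is the content used in Theorem~\ref{Theo1}.

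I expect the main obstacle to be the tightness/no-loss-of-mass step: establishing a uniform-in-$R_j$ decay estimate for the $\mathcal{H}$-norm of the eigenfunctions at spatial infinity, so that weak convergence in $\mathcal{H}$ can be promoted to strong convergence and \eqref{6} survives the limit. This requires carefully exploiting that $\lambda_n^{R_j}$ stays bounded away from $\lambda_0=1$ (by Lemma~\ref{Lem1}, $\limsup \lambda_n^{R_j} \le \lambda_N^\infty < 1$) so that the operator $L_{R_j}-\lambda_n^{R_j}K$ is uniformly coercive at infinity, combined with the fact that on $\{|x|>R_j\}$ the operator $L_{R_j}$ coincides with $L_+$, whose potential is bounded below by a positive constant on the relevant region. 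Everything else is a relatively standard weak-compactness-plus-Rellich argument together with min-max bookkeeping.
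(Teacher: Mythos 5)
Your overall skeleton for the first part of the lemma matches the paper's: extract weak limits in $\mathcal{H}$ from the normalization \eqref{6}, pass to the limit in the weak formulation against $\Theta\in C_c^\infty(\R)$ (where the $M_+$ correction outside $[-R_j,R_j]$ eventually misses the support of $\Theta$), identify $(\hat\lambda_n,\hat\Phi_n)$ as eigenpairs of \eqref{2}, and deduce \eqref{13} by min-max once the $\hat\Phi_n$ are known to be linearly independent. However, you have the structure of $L_R$ backwards, and this propagates into both of your hard steps. From \eqref{R}, $L_R$ coincides with $L_+$ \emph{inside} $[-R,R]$ and with $L_-$ \emph{outside}; on $\{|x|>R_j\}$ there is no ``$L_+$-type coercivity'' to exploit, since the potential of $L_-$ in the first component tends to $0$ as $x\to+\infty$ (and likewise for the second component as $x\to-\infty$). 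The $L^2$ bound that yields $\hat\Phi_n\in H^1(\R)$ comes instead from the $M_+$ block on $[-R,R]$: its diagonal entries $2u_1^2$, $2u_2^2$ are bounded below on $[0,\infty)$ and $(-\infty,0]$ respectively, which is how the paper (Lemma \ref{Lem3}) obtains uniform bounds on $\|\varphi_n^R\|_{L^2(0,R)}$ and $\|\psi_n^R\|_{L^2(-R,0)}$ and then passes to the limit. Your argument as written would look for this control in the wrong region.

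The more serious gap is linear independence. You propose to upgrade weak convergence to strong convergence in $\mathcal{H}$ via a uniform tightness estimate, so that the orthonormality \eqref{6} survives the limit; you acknowledge this as the main obstacle but do not supply it, and the mechanism you invoke for it is again the nonexistent coercivity of $L_{R_j}$ outside $[-R_j,R_j]$. Note that the lemma deliberately claims only linear independence, not orthonormality, of the limits: strong convergence in $\mathcal{H}$ is not established in the paper and is not needed. The paper's route (Lemmas \ref{Lem4} and \ref{Lem5}) is a concentration--compactness contradiction: if a nontrivial combination $\Phi^{(k)}=\sum_n\alpha_n\Phi_n^{R_{j_k}}$ converged weakly to zero while keeping $\|\Phi^{(k)}\|_{\mathcal{H}}=1$, then after splitting $\Phi^{(k)}$ with smooth cutoffs $S,C$ into a far-field part (where Lemma \ref{Lem4} gives $\mathcal{Q}_R\ge(1-\eta)\|\cdot\|_{\mathcal{H}}^2$, uniformly in $R\ge x_0$) and a compactly supported part (which vanishes strongly by Rellich), one finds $\mathcal{Q}_{R_{j_k}}(\Phi^{(k)},\Phi^{(k)})\ge 1-3\eta$, contradicting $\mathcal{Q}_{R_{j_k}}(\Phi^{(k)},\Phi^{(k)})\le\lambda_N^{R_{j_k}}$, which stays bounded away from $1$. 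In short, mass escaping to infinity is not excluded a priori; it is excluded only because it would force the Rayleigh quotient up to the threshold $\lambda_0=1$. You need an argument of this type (or a genuine proof of your tightness claim, which cannot rest on the far-field behavior of $L_R$ as you describe it) to close the proof.
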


\begin{proof}
By \eqref{6}, the sequence $(\Phi_n^{R_{j}})_{j \in \N}$ is bounded in the Hilbert space $\mathcal{H}$ for each 
$n \in \{ 1, \dots, N\}$, whence $\hat{\Phi}_1, \dots, \hat{\Phi}_N \in \mathcal{H}$ exist such that \eqref{14} holds. We will show that
\be\label{15}
\hat{\Phi}_1, \dots, \hat{\Phi}_N \in H^1(\R)
\ee
and that
\be\label{16}
\hat{\Phi}_1, \dots, \hat{\Phi}_N \text{ are linearly independent}
\ee
in the subsequent Lemmas \ref{Lem3}, \ref{Lem4}, and \ref{Lem5}.

Fix $n \in \{ 1, \dots, N\}$ and $\Psi \in C_c^{\infty}(\R)$, and $R_0 > 0$ such that $\text{supp}\; \Psi \subset (-R_0, R_0)$.
Since \eqref{14} implies that
\bn
\left( \Phi_n^{R_{j_k}} \right)' \rightharpoonup \hat{\Phi}_n', \quad
\Phi_n^{R_{j_k}} \rightharpoonup \hat{\Phi}_n \quad \text{ weakly in } L^2 (-R_0,R_0),
\en
we obtain, for $k$ such that $R_{j_k} \ge R_0$,
\be\label{17}
\mathcal{Q}_{R_{j_k}} \left( \Phi_n^{R_{j_k}}, \Psi \right) = \mathcal{Q}_{\infty} \left( \Phi_n^{R_{j_k}},\Psi \right) \underset{\scriptsize{k \to \infty}}{\rightarrow} \mathcal{Q}_{\infty} \left( \hat{\Phi}_n , \Psi \right)
\ee
and
\be\label{18}
\langle \Phi_n^{R_{j_k}},\Psi \rangle_{\mathcal{H}} \underset{\scriptsize{k \to \infty}}{\rightarrow} \langle \hat{\Phi}_n,\Psi \rangle_{\mathcal{H}}.
\ee
Since $(\lambda_n^{R_{j_k}}, \Phi_n^{R_{j_k}})$ is an eigenpair of problem \eqref{1}, and moreover $\lambda_n^{R_{j_k}} \to \hat{\lambda}_n ~ (k \to \infty)$, \eqref{17} and \eqref{18} imply
\be\label{19}
\mathcal{Q}_{\infty} \left( \hat{\Phi}_n,\Psi \right) = \hat{\lambda}_n \langle  \hat{\Phi}_n,\Psi \rangle_{\mathcal{H}}.
\ee
This holds for every $\Psi \in C_c^{\infty} (\R)^2$, and hence by \eqref{15} for every $\Psi \in H^1(\R)$. Thus, \eqref{16} (implying $\hat{\Phi}_n \not\equiv 0$) and \eqref{19} show that $(\hat{\lambda}_n,\hat{\Phi}_n)$ is indeed an eigenpair of problem \eqref{2}.

Finally, \eqref{4} and \eqref{12} show that $\hat{\lambda}_1 \le \dots \le \hat{\lambda}_N$, which by \eqref{16} implies the claim \eqref{13} since $\lambda_1^{\infty} \le \dots \le \lambda_N^{\infty}$ are the $N$ {\it smallest} eigenvalues of problem \eqref{2}.
\end{proof}

{\it Proof of Theorem \ref{Theo1}:} Fix $n_0 \in \{ 1, \dots, N\}$, and choose some sequence $(R_j) \to \infty$ such that
\be\label{20}
\lambda_{n_0}^{R_j} \underset{\scriptsize{j \to \infty}}{\rightarrow} \liminf\limits_{R \to \infty} \lambda_{n_0}^{R} =: \hat{\lambda}_{n_0}.
\ee
It is easy to check that $M_- + M_R + \frac{2 \gamma + 1}{\gamma -1} M_K$ is positive semi-definite
on $[-R,R]$ for every $R > 0$, whence
\bn
\lambda_n^R \ge - \frac{2 \gamma + 1}{\gamma -1} \quad (n = 1, \dots, N).
\en
So $\left( \lambda_n^{R_j} \right)_{j \in \N}$ is bounded for all $n \in \{ 1, \dots, N\}$, whence along a subsequence, denoted by $(R_j)$ again, $\lambda_n^{R_j}$ converges to some $\hat{\lambda}_n$, for each $n \in \{ 1, \dots, N\} \setminus \{ n_0\}$. Using Lemma \ref{Lem2}, property \eqref{13} together with \eqref{20} shows that
\bn
\liminf\limits_{R \to \infty} \lambda_{n_0}^R \ge \lambda_{n_0}^{\infty}.
\en
This holds for every $n_0 \in \{ 1, \dots, N\}$, which together with Lemma \ref{Lem1} proves the claim \eqref{5}.

By \eqref{5}, the assumption \eqref{12} of Lemma \ref{Lem2} holds for $\hat{\lambda}_n := \lambda_n^{\infty} ~ (n =  1, \dots, N)$, and hence \eqref{14} implies \eqref{7} with $\Phi_n^{\infty} := \hat{\Phi}_n ~ (n = 1, \dots, N)$. \hfill \qed\\

The next three Lemmas provide the proof of properties (\ref{15}) and (\ref{16}).

\renewcommand{\theLemma}{A.3}
\begin{Lemma}\label{Lem3}
Property \eqref{15} holds.
\end{Lemma}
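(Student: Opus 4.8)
Fix $n\in\{1,\dots,N\}$. The plan is to upgrade the weak $\mathcal{H}$-limit $\hat\Phi_n$ --- which we already know lies in the Hilbert space $\mathcal{H}$ by \eqref{14}, so in particular $\hat\Phi_n'\in L^2(\R)$ and $|\hat\Phi_n(x)|\le C(1+|x|^{1/2})$ by Cauchy--Schwarz --- to a function in $L^2(\R)$, by showing it decays exponentially at both ends of the line; combined with $\hat\Phi_n'\in L^2(\R)$ this yields $\hat\Phi_n\in H^1(\R)$, which is \eqref{15}. First I would record the facts available before this lemma: $\hat\lambda_n<1$ (by Lemmas \ref{Lem1} and \ref{Lem2}), and, by \eqref{19} applied to all $\Psi\in C_c^{\infty}(\R)^2$, the function $\hat\Phi_n$ is a distributional solution on $\R$ of $L_+\hat\Phi_n=\hat\lambda_n K\hat\Phi_n$, i.e. of $-(1-\hat\lambda_n)\hat\Phi_n''+\bigl(M_-+M_++(-\hat\lambda_n)M_K\bigr)\hat\Phi_n=0$. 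Since all coefficients are smooth and $1-\hat\lambda_n\neq 0$, a standard bootstrap makes $\hat\Phi_n\in C^\infty(\R)^2$ a classical solution of
\[
\hat\Phi_n''=Q(x)\,\hat\Phi_n,\qquad Q(x):=\tfrac{1}{1-\hat\lambda_n}\bigl(M_-(x)+M_+(x)-\hat\lambda_n M_K(x)\bigr).
\]

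Next I would identify the limits of the symmetric matrix $Q$ at $\pm\infty$ using the boundary conditions \eqref{bc-ode}. As $x\to+\infty$ one has $u_1\to 1$, $u_2\to 0$, so $M_-+M_+\to\mathrm{diag}(2,\gamma-1)$ and $M_K\to(\gamma-1)\,\mathrm{diag}(0,1)$, whence $Q(x)\to\mathrm{diag}\bigl(\tfrac{2}{1-\hat\lambda_n},\gamma-1\bigr)$; symmetrically, as $x\to-\infty$, $Q(x)\to\mathrm{diag}\bigl(\gamma-1,\tfrac{2}{1-\hat\lambda_n}\bigr)$, the off-diagonal entry $\tfrac{2\gamma u_1 u_2}{1-\hat\lambda_n}$ tending to $0$ in both cases. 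Crucially, since $\gamma>1$ and $\hat\lambda_n<1$, both limiting matrices are positive definite, so there exist $R_0>0$ and $c_0>0$ such that $v^{T}Q(x)v\ge c_0|v|^2$ for all $v\in\R^2$ and all $|x|\ge R_0$.

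The core of the proof is then an elementary scalar estimate for $g(x):=|\hat\Phi_n(x)|^2$. Using $\hat\Phi_n''=Q\hat\Phi_n$ together with the symmetry and positivity of $Q$, for $|x|\ge R_0$,
\[
g''=2\,\hat\Phi_n''\!\cdot\!\hat\Phi_n+2|\hat\Phi_n'|^2=2\,\hat\Phi_n^{T}Q\,\hat\Phi_n+2|\hat\Phi_n'|^2\ \ge\ 2c_0\,g .
\]
Set $k:=\sqrt{2c_0}$. On $[R_0,\infty)$ the function $p:=g'+kg$ satisfies $(e^{-kx}p)'=e^{-kx}(g''-k^2g)\ge 0$, so $e^{-kx}p$ is nondecreasing; if it were strictly positive at some point of $[R_0,\infty)$ then, integrating $(e^{kx}g)'=e^{kx}p$, the function $g$ would grow at least like $e^{kx}$, contradicting the sub-linear bound $g(x)\le C(1+|x|)$. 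Hence $p\le 0$ on $[R_0,\infty)$, so $(e^{kx}g)'\le 0$ and $g(x)\le g(R_0)e^{kR_0}e^{-kx}$. Running the symmetric argument on $(-\infty,-R_0]$ with $q:=g'-kg$ gives $g(x)\le g(-R_0)e^{kR_0}e^{kx}$ there. Thus $\hat\Phi_n$ decays exponentially as $|x|\to\infty$, so $\hat\Phi_n\in L^2(\R)$, and together with $\hat\Phi_n'\in L^2(\R)$ we conclude $\hat\Phi_n\in H^1(\R)$, which is \eqref{15}.

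I expect the main obstacle to be conceptual rather than computational: membership of $\hat\Phi_n$ in $\mathcal{H}$ only controls $\hat\Phi_n$ in $L^2$ against the weights $1-u_j^2$, which decay exponentially at one end of the line for each component, so it does \emph{not} by itself give $\hat\Phi_n\in L^2(\R)$; one genuinely has to exploit the equation $L_+\hat\Phi_n=\hat\lambda_n K\hat\Phi_n$, and it is precisely the sign conditions $\gamma>1$ and $\hat\lambda_n<1$ that make the limiting potentials positive definite and thereby force the exponential decay. The only other point requiring (routine) care is the regularity bootstrap that allows $g$ to be differentiated classically.
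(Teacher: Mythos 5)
Your proof is correct, but it takes a genuinely different route from the paper. The paper stays entirely at the level of quadratic forms: it combines the upper bound $\mathcal{Q}_R(\Phi_n^R,\Phi_n^R)=\lambda_n^R\le\lambda_n^\infty+1$ with a lower bound in which the off-diagonal and ``wrong-sign'' terms of $M_-+M_+$ are absorbed into the (normalized) $\mathcal{H}$-norm, leaving the coercive contributions $\bigl(\min_{[0,\infty)}u_1^2\bigr)\int_0^R(\varphi_n^R)^2\,dx$ and $\bigl(\min_{(-\infty,0]}u_2^2\bigr)\int_{-R}^0(\psi_n^R)^2\,dx$; this yields $R$-uniform $L^2$ bounds on the approximating eigenfunctions over the half-lines not already controlled by the $\mathcal{H}$-weights, which then pass to the weak limit. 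You instead work directly with the limit $\hat\Phi_n$, identify it via \eqref{19} as a (classical, after bootstrap) solution of $\hat\Phi_n''=Q\hat\Phi_n$ with $Q$ uniformly positive definite near $\pm\infty$ (this is exactly where $\hat\lambda_n<1$ and $\gamma>1$ enter), and run a convexity/maximum-principle argument on $|\hat\Phi_n|^2$ against the sub-linear a priori growth coming from $\hat\Phi_n'\in L^2$. Your computations of the limiting matrices and the differential inequality are correct, and the conclusion (exponential decay) is strictly stronger than what the lemma asks for. Two remarks. First, your argument leans on \eqref{19} for compactly supported test functions, which is established inside the proof of Lemma \ref{Lem2} \emph{before} \eqref{15} is invoked (the extension to $H^1$ test functions is what needs \eqref{15}); this is logically sound, but you should state explicitly that you are only using the $C_c^\infty$ version to avoid the appearance of circularity. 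Second, the paper's variational route has the side benefit of giving uniform quantitative control on the $\Phi_n^R$ themselves, in the same spirit as the surrounding Lemmas A.4--A.5, whereas yours gives sharper information (exponential decay, smoothness) about the limit only, at the cost of a regularity bootstrap.
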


\begin{proof}
Fix $n \in \{ 1, \dots, N\}$ and let $\hat{\Phi}_n = {{\hat{\varphi}_n}\choose{\hat{\psi}_n}}$. Since $\hat{\Phi}_n \in \mathcal{H}$, we are left to show that
\be\label{21}
\hat{\varphi}_n \mid_{(0,\infty)} \in L^2 (0,\infty) , ~ \hat{\psi}_n \mid_{(-\infty,0)} \in L^2 (-\infty,0).
\ee
The orthonormal property \eqref{6} and Lemma \ref{Lem1} show that for sufficiently large $R > 0$, we have
\be\label{22}
\mathcal{Q}_R \left( \Phi_n^R, \Phi_n^R \right) = \lambda_n^R \langle \Phi_n^R, \Phi_n^R \rangle_{\mathcal{H}} = \lambda_n^R \le \lambda_n^{\infty} + 1.
\ee
On the other hand, denoting $\Phi_n^R = {{\varphi_n^R}\choose{\psi_n^R}}$, we obtain
\bn
\mathcal{Q}_R\left( \Phi_n^R, \Phi_n^R \right) & \ge & \int\limits_{\R}  \left( \Phi_n^R \right)^T M_-  \Phi_n^R dx + \int\limits_{-R}^R \left( \Phi_n^R \right)^T M_+ \Phi_n^R dx\\
&\ge& - \frac{1}{\gamma -1} \langle \Phi_n^R, \Phi_n^R \rangle_{\mathcal{H}} + \int\limits_{-R}^R [ 2 u_1^2 (\varphi_n^R)^2 + 4 \gamma u_1 u_2 \varphi_n^R \psi_n^R + 2 u_2^2 (\psi_n^R)^2] dx\\
&\ge& - \frac{1}{\gamma -1} + \int\limits_0^R [ 2 u_1^2 (\varphi_n^R)^2 - u_1^2 ( \varphi_n^R)^2 - 4 \gamma^2 u_2^2 ( \psi_n^R)^2 +  2 u_2^2 (\psi_n^R)^2] dx\\
&& \phantom{textte} + \int\limits_{-R}^0 [ 2 u_1^2 (\varphi_n^R)^2 - 4 \gamma^2 u_1^2 ( \varphi_n^R)^2 -  u_2^2 (\psi_n^R)^2  + 2 u_2^2 (\psi_n^R)^2 ] dx.
\en
The right hand side is now estimated from below by
\bn
&& - \frac{1}{\gamma -1} +  \left( \min\limits_{[0,\infty)} u_1^2 \right) \int\limits_{0}^R (\varphi_n^R)^2 dx +
\left( \min\limits_{(-\infty,0]} u_2^2 \right) \int\limits_{-R}^0 (\psi_n^R)^2 dx\\
&& - (4 \gamma^2 - 2) \left[  \left(  \max\limits_{[0,\infty)} \frac{u_2^2}{1-u_2^2} \right)  \int\limits_{0}^R  (1-u_2^2) (\psi_n^R)^2 dx
+  \left(  \max\limits_{(-\infty,0]} \frac{u_1^2}{1-u_1^2} \right)  \int\limits_{-R}^0  (1-u_1^2) (\varphi_n^R)^2 dx   \right].
\en
Since here the two minima are positive and the two maxima are finite, and since
\bn
 \int\limits_{0}^R  (1-u_2^2) (\psi_n^R)^2 dx +  \int\limits_{-R}^0  (1-u_1^2) (\varphi_n^R)^2 dx \le \frac{1}{\gamma -1} \langle \Phi_n^R, \Phi_n^R \rangle_{\mathcal{H}}  = \frac{1}{\gamma - 1},
\en
we obtain together with \eqref{22} that there exists an $R$-independent positive constant $C$ such that
\be\label{23}
\int\limits_{0}^R (\varphi_n^R)^2 dx \le C, \quad \int\limits_{-R}^0 (\psi_n^R)^2 dx \le C
\ee
for all sufficiently large $R$.

Now fix some $R_0 > 0$. Since weak convergence in $\mathcal{H}$ implies strong convergence in $L^2(-R_0,R_0)$, we obtain from \eqref{14} that
\bn
\varphi_n^{R_{j_k}} \underset{\scriptsize{k \to \infty}}{\rightarrow} \hat{\varphi}_n \text{ in } L^2 (0,R_0), \quad
\psi_n^{R_{j_k}} \underset{\scriptsize{k \to \infty}}{\rightarrow} \hat{\psi}_n \text{ in } L^2 (-R_0,0),
\en
and thus for $k$ such that $R_{j_k} \ge R_0$, using \eqref{23},
\bn
\| \hat{\varphi}_n \|_{L^2 (0,R_0)} = \lim\limits_{k \to \infty} \| \varphi_n^{R_{j_k}} \|_{L^2 (0,R_0)}  \le \limsup\limits_{k \to \infty}  \| \varphi_n^{R_{j_k}} \|_{L^2 (0,R_{j_k})}  \le C,
\en
and analogously $\| \hat{\psi}_n \|_{L^2 (-R_0,0)} \le C$. Since this holds for every $R_0 >0$, the claim \eqref{21} follows.
\end{proof}

\renewcommand{\theLemma}{A.4}
\begin{Lemma}\label{Lem4}
(auxiliary for Lemma \ref{Lem5}): Let $\eta \in (0,1)$. Then some $x_0 > 0$ exists such that, for all ${{\varphi}\choose{\psi}} \in \mathcal{H}$ satisfying $\varphi (x_0) = \psi (x_0) = 0$, and all $R \ge x_0$,
\begin{eqnarray}\label{24}
&& \int\limits_{x_0}^{\infty}  \left\{ (\varphi')^2 + (\psi')^2 + (\varphi,\psi) (M_- + M_+ \chi_{[-R,R]}) {{\varphi}\choose{\psi}}
\right\} dx \ge \nonumber\\
&& \ge (1-\eta) \int\limits_{x_0}^{\infty} \left\{ (\varphi ')^2 + (\psi')^2 + (\varphi,\psi) M_K  {{\varphi}\choose{\psi}}  \right\} dx .
\end{eqnarray}
\end{Lemma}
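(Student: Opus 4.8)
The plan is to transfer everything to one side, prove the nonnegativity of a single quadratic form, and exploit the exponential decay of $1-u_1^2$ and $u_2^2$ on $[x_0,\infty)$ together with a one-sided Hardy-type estimate that uses the constraint $\varphi(x_0)=0$.

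First I rewrite \eqref{24} as the nonnegativity of the single quadratic form
\[
I_R(\varphi,\psi):=\int_{x_0}^{\infty}\Big\{\eta\big[(\varphi')^2+(\psi')^2\big]+(\varphi,\psi)\big(M_-+M_+\chi_{[-R,R]}-(1-\eta)M_K\big)\binom{\varphi}{\psi}\Big\}\,dx,
\]
i.e.\ I must show $I_R(\varphi,\psi)\ge 0$ for $x_0$ large and every $R\ge x_0$. Using properties (a) and (d) I record the pointwise bounds $0\le 1-u_1^2(x)\le Ce^{-\sqrt2\,x}$, $0\le u_2^2(x)\le Ce^{-2\sqrt{\gamma-1}\,x}$, and $0\le u_1(x)u_2(x)\le Ce^{-\sqrt{\gamma-1}\,x}$ for $x\ge 0$, and I split $[x_0,\infty)=[x_0,R]\cup[R,\infty)$.

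On $[x_0,R]$ the relevant matrix is $M_-+M_+-(1-\eta)M_K$; its entries tend, as $x\to\infty$, to $2$, $\eta(\gamma-1)$, and $0$ (off-diagonal), and the corrections are dominated by the exponentially small quantities above, so for $x_0$ large this matrix is $\ge c_0(\eta)\,I$ for every $x\ge x_0$; hence the integrand on $[x_0,R]$ is $\ge\eta[(\varphi')^2+(\psi')^2]$. On $[R,\infty)$ the term $M_+\chi_{[-R,R]}$ drops out and $M_--(1-\eta)M_K=\mathrm{diag}(p_1,p_2)$ with, after an elementary rearrangement, $p_2(x)=(\gamma-1)[\eta+(1-\eta)u_2^2]+u_2^2-\gamma(1-u_1^2)\ge\tfrac12\eta(\gamma-1)$ and $p_1(x)=\gamma u_2^2-[1+(1-\eta)(\gamma-1)](1-u_1^2)\ge -Ce^{-\kappa x}$ for $x\ge x_0$ and some $\kappa>0$; so the integrand on $[R,\infty)$ is $\ge\eta[(\varphi')^2+(\psi')^2]-Ce^{-\kappa x}\varphi^2$.

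The only genuinely delicate step is to absorb the exponentially small but possibly negative contribution $-Ce^{-\kappa x}\varphi^2$ on $[R,\infty)$ against a fraction of the kinetic energy, and this is exactly where the boundary condition enters. Since $\varphi\in\mathcal{H}$ has a continuous representative and $\varphi(x_0)=0$, for $x\ge x_0$ one has $\varphi(x)^2=\big(\int_{x_0}^x\varphi'\big)^2\le(x-x_0)\int_{x_0}^\infty(\varphi')^2$, whence
\[
\int_R^\infty Ce^{-\kappa x}\varphi^2\,dx\ \le\ \Big(\int_{x_0}^\infty Ce^{-\kappa x}(x-x_0)\,dx\Big)\int_{x_0}^\infty(\varphi')^2\,dx\ =\ \frac{C}{\kappa^2}\,e^{-\kappa x_0}\int_{x_0}^\infty(\varphi')^2\,dx .
\]
I then fix $x_0=x_0(\eta,\gamma)$ so large that simultaneously $\tfrac{C}{\kappa^2}e^{-\kappa x_0}\le\eta$ and the positivity claims of the previous paragraph hold. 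Adding the two intervals and discarding the nonnegative surplus terms,
\[
I_R(\varphi,\psi)\ \ge\ \eta\int_{x_0}^\infty\big[(\varphi')^2+(\psi')^2\big]\,dx-\int_R^\infty Ce^{-\kappa x}\varphi^2\,dx\ \ge\ \eta\int_{x_0}^\infty(\psi')^2\,dx\ \ge\ 0,
\]
which is \eqref{24}, and $x_0$ is uniform in $R\ge x_0$ and in $\binom{\varphi}{\psi}$, as required. I do not expect a real obstacle beyond the bookkeeping of choosing one threshold $x_0$ that serves all three needs at once — positive definiteness of the matrix on $[x_0,R]$, positivity of $p_2$ on $[R,\infty)$, and the Hardy absorption of $p_1$ — which is routine because every offending term decays exponentially.
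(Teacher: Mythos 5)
Your proof is correct and follows essentially the same route as the paper's: both reduce \eqref{24} to a one-sided inequality in which the only potentially negative contributions are exponentially decaying potential terms, and both absorb these into a small fraction of the kinetic energy via a Hardy-type estimate that exploits $\varphi(x_0)=\psi(x_0)=0$ together with the decay rates from properties (a) and (d). The differences are cosmetic: you split at $R$ and check pointwise positivity of the matrices $M_-+M_+-(1-\eta)M_K$ and $M_--(1-\eta)M_K$ separately, where the paper simply dominates all offending integrands by $2\gamma(1-u_1+u_2)(\varphi^2+\psi^2)$, and you derive the Hardy bound by Cauchy--Schwarz rather than by integration by parts.
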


\begin{proof}
The asserted inequality is equivalent to
\begin{eqnarray}
\label{25}
\eta  \int\limits_{x_0}^{\infty} \{ (\varphi ')^2 + (\psi')^2 \} dx & \ge & \int\limits_{x_0}^{\infty} \{ (1-\eta) (\gamma - 1) (1-u_1^2) -
(u_1^2 + \gamma u_2^2 - 1)  \} \varphi^2 dx \nonumber\\
& \phantom{t} & + \int\limits_{x_0}^{\infty} \{  (1-\eta) (\gamma - 1) (1-u_2^2) - (\gamma u_1^2 + u_2^2 - 1) \} \psi^2 dx \nonumber\\
& \phantom{t} & + 2  \int\limits_{x_0}^{R} \{- u_1^2 \varphi^2 - u_2^2 \psi^2 - 2 \gamma u_1 u_2 \varphi \psi \} dx.
\end{eqnarray}
Since the three integrands on the right-hand side of \eqref{25} are bounded from above by
\bn
\gamma (1-u_1^2) \varphi^2, ~ \gamma (1-u_1^2) \psi^2, \text{ and } \gamma u_1 u_2 (\varphi^2 + \psi^2),
\en
respectively, and since $1-u_1^2 \le 2 (1-u_1)$ and $u_1 \le 1$, the following inequality is sufficient for \eqref{25}:
\be\label{26}
\eta \int\limits_{x_0}^{\infty} \{ (\varphi ')^2 + (\psi')^2 \} dx \ge 2 \gamma \int\limits_{x_0}^{\infty} (1-u_1 + u_2) ( \varphi^2  + \psi^2) dx.
\ee
We know from properties (a) and (d) of the domain wall solutions
that there exist some positive constants $C$ and $\alpha$ such that
\be\label{27}
2 \gamma \{ 1 - u_1(x) + u_2(x) \} \le C e^{-\alpha x} \quad \mbox{\rm for all \;} x > 0.
\ee
Finally, for ${{\varphi}\choose{\psi}} \in \mathcal{H}$ satisfying $\varphi (x_0) = \psi (x_0) = 0$, and all $y \ge x_0$,
\bn
 \int\limits_{x_0}^{y} e^{-\alpha x} \varphi^2 dx & = & - \frac{1}{\alpha} e^{-\alpha x} \varphi^2 \Big|_{x_0}^y + \frac{2}{\alpha} \int\limits_{x_0}^{y} e^{-\alpha x} \varphi \varphi' dx \\
 &\le&  \frac{2}{\alpha} e^{-\frac{\alpha}{2}  x_0} \int\limits_{x_0}^{y}  e^{-\frac{\alpha}{2} x} \mid \varphi \varphi' \mid dx \\
 &\le&  \frac{1}{\alpha} e^{-\frac{\alpha}{2} x_0} \left[  \int\limits_{x_0}^{y}  e^{-\alpha x} \varphi^2 dx +  \int\limits_{x_0}^{\infty} ( \varphi')^2 dx \right]
 \en
 and hence, if $\frac{1}{\alpha} e^{-\frac{\alpha}{2} x_0} < 1$,
\bn
 \int\limits_{x_0}^{y}  e^{-\alpha x} \varphi^2 dx \le \frac{\frac{1}{\alpha}  e^{-\frac{\alpha}{2} x_0} }{1- \frac{1}{\alpha}  e^{-\frac{\alpha}{2} x_0} } \int\limits_{x_0}^{\infty} (\varphi')^2 dx.
\en
Thus, the integral on the left converges as $y \to \infty$.

An analogous inequality holds with $\psi$ instead of $\varphi$. Together with \eqref{27} we find that \eqref{26}, and hence \eqref{25} holds if $x_0$ is large enough to satisfy
\bn
 \frac{ \frac{C}{\alpha}  e^{-\frac{\alpha}{2} x_0} }{1- \frac{1}{\alpha}  e^{-\frac{\alpha}{2} x_0} } \le \eta.
\en
Thus, the claim (\ref{24}) follows.
\end{proof}

\renewcommand{\theLemma}{A.5}
\begin{Lemma}\label{Lem5}
Property \eqref{16} holds.
\end{Lemma}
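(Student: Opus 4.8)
The plan is to argue by contradiction, using the tail coercivity of Lemma~\ref{Lem4} to rule out loss of $\mathcal{H}$-mass at infinity.

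\emph{Setting up the contradiction.} Suppose $\hat\Phi_1,\dots,\hat\Phi_N$ are linearly dependent, so there are real numbers $c_1,\dots,c_N$ with $\sum_{n=1}^N c_n^2 = 1$ and $\sum_{n=1}^N c_n\hat\Phi_n = 0$ in $\mathcal{H}$. Along the subsequence $(R_{j_k})$ of \eqref{14} I would put $\Phi^k := \sum_{n=1}^N c_n\Phi_n^{R_{j_k}}\in\mathcal{H}$. By the normalization \eqref{6} we have $\|\Phi^k\|_{\mathcal{H}}^2 = \sum_n c_n^2 = 1$, and since each $\Phi_n^{R_{j_k}}$ is an eigenfunction of \eqref{1}, testing the equation against $\Phi_m^{R_{j_k}}$ and using \eqref{6} gives $\mathcal{Q}_{R_{j_k}}(\Phi_n^{R_{j_k}},\Phi_m^{R_{j_k}}) = \lambda_n^{R_{j_k}}\delta_{nm}$; hence, by the ordering \eqref{4},
\[
\mathcal{Q}_{R_{j_k}}(\Phi^k,\Phi^k)\;=\;\sum_{n=1}^N c_n^2\,\lambda_n^{R_{j_k}}\;\le\;\lambda_N^{R_{j_k}} .
\]
By Lemma~\ref{Lem1} (inequality \eqref{8}) and the hypothesis $\lambda_N^\infty<1$ of Theorem~\ref{Theo1}, this forces $\limsup_{k\to\infty}\mathcal{Q}_{R_{j_k}}(\Phi^k,\Phi^k)\le\lambda_N^\infty<1$. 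On the other hand $\Phi^k\rightharpoonup\sum_n c_n\hat\Phi_n = 0$ weakly in $\mathcal{H}$, and the contradiction will be reached once I show $\liminf_{k\to\infty}\mathcal{Q}_{R_{j_k}}(\Phi^k,\Phi^k)\ge 1$.

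\emph{The lower bound.} I would fix $\eta\in(0,1)$ and take $x_0=x_0(\eta)>0$ from Lemma~\ref{Lem4}. Estimate \eqref{24} was proved under the condition $\varphi(x_0)=\psi(x_0)=0$; rerunning its proof without that condition changes nothing except that the integration by parts now produces an uncompensated boundary term, so that for every $\binom{\varphi}{\psi}\in\mathcal{H}$ and every $R\ge x_0$,
\[
\int_{x_0}^{\infty}\!\Bigl[(\varphi')^2+(\psi')^2+(\varphi,\psi)(M_-+M_+\chi_{[-R,R]})\binom{\varphi}{\psi}\Bigr]dx\;\ge\;(1-\eta)\int_{x_0}^{\infty}\!\Bigl[(\varphi')^2+(\psi')^2+(\varphi,\psi)M_K\binom{\varphi}{\psi}\Bigr]dx\;-\;C\bigl(\varphi(x_0)^2+\psi(x_0)^2\bigr),
\]
with $C=C(x_0,\eta)$, together with the symmetric inequality on $(-\infty,-x_0]$ coming from property (a) (enlarging $x_0$ if necessary). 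Now split both $\mathcal{Q}_{R}(\Phi^k,\Phi^k)$ and $\|\Phi^k\|_{\mathcal{H}}^2$ into the integrals over $(-\infty,-x_0]$, $[-x_0,x_0]$, and $[x_0,\infty)$. On the compact interval $[-x_0,x_0]$ (contained in $[-R,R]$ for $R$ large) both the $\mathcal{Q}_R$-density and the $\mathcal{H}$-density equal $(\varphi^k)'^2+(\psi^k)'^2$ plus a term with fixed bounded matrix coefficients times $|\Phi^k|^2$; since $\Phi^k\rightharpoonup 0$ in $\mathcal{H}$ is bounded in $H^1(-x_0,x_0)$ (the weight being bounded below there), Rellich's theorem gives $\Phi^k\to 0$ in $L^2(-x_0,x_0)$ and in $C([-x_0,x_0])$, so those potential terms, as well as the boundary corrections $C(\varphi^k(\pm x_0)^2+\psi^k(\pm x_0)^2)$, are $o(1)$ as $k\to\infty$. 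Applying the two tail inequalities on $|x|\ge x_0$ and using $\|\Phi^k\|_{\mathcal{H}}^2 = 1$, I would obtain
\[
\mathcal{Q}_{R_{j_k}}(\Phi^k,\Phi^k)\;\ge\;(1-\eta)\,\|\Phi^k\|_{\mathcal{H}}^2 + o(1)\;=\;1-\eta+o(1).
\]
Hence $\liminf_k\mathcal{Q}_{R_{j_k}}(\Phi^k,\Phi^k)\ge 1-\eta$, and letting $\eta\downarrow 0$ gives $\liminf_k\mathcal{Q}_{R_{j_k}}(\Phi^k,\Phi^k)\ge 1$, contradicting the bound of the first step.

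\emph{Main obstacle.} The hard part is the lack of compactness: weak $\mathcal{H}$-convergence does not preserve the $\mathcal{H}$-norm, and because the weight $(\gamma-1)(1-u_j^2)$ degenerates at $\pm\infty$, one must preclude $\mathcal{H}$-mass of $\Phi^k$ from escaping to infinity. The tail coercivity of Lemma~\ref{Lem4} is precisely the tool for this: on $\{|x|\ge x_0\}$ the form $\mathcal{Q}_R$ dominates $(1-\eta)$ times the $\mathcal{H}$-form, so any escaping mass would push $\mathcal{Q}_R(\Phi^k,\Phi^k)$ up to at least $1$, which is incompatible with $\lambda_N^\infty<1$. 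The only genuinely technical point is to re-derive Lemma~\ref{Lem4} with a free (rather than vanishing) endpoint value of $\binom{\varphi}{\psi}$ at $x=\pm x_0$ and to check that the extra boundary term is negligible, which follows from the uniform-on-compacts convergence $\Phi^k\to 0$.
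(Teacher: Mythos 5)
Your proof is correct and shares the paper's overall skeleton --- argue by contradiction, form $\Phi^{(k)}=\sum_n c_n\Phi_n^{R_{j_k}}$ of unit $\mathcal{H}$-norm converging weakly to zero, obtain the upper bound $\mathcal{Q}_{R_{j_k}}(\Phi^{(k)},\Phi^{(k)})\le\lambda_N^{R_{j_k}}$ from \eqref{6}, and contradict it with a lower bound $1-O(\eta)$ extracted from the tail coercivity of Lemma \ref{Lem4} and the strong local convergence $\Phi^{(k)}\to 0$ --- but you execute the localization step differently. The paper uses a smooth partition $S^2+C^2\equiv 1$ and the IMS-type identities \eqref{34}--\eqref{35}, so that $S\Phi^{(k)}$ genuinely vanishes on $[-x_0,x_0]$ and Lemma \ref{Lem4} applies verbatim; the price is the commutator term $\tfrac{\pi^2}{4}\int_{I_0}|\Phi^{(k)}|^2$ and a two-case analysis on whether $\|(C\Phi^{(k)})'\|_{L^2}$ stays bounded away from zero. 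You instead cut sharply at $\pm x_0$ and absorb the loss into a boundary correction $C(\varphi(x_0)^2+\psi(x_0)^2)$ in a free-endpoint version of Lemma \ref{Lem4}. That modification is legitimate: the vanishing condition in the paper's proof of \eqref{24} enters only through the integration by parts of $\int_{x_0}^{y}e^{-\alpha x}\varphi^2\,dx$, which with a free endpoint merely contributes the extra term $\tfrac{1}{\alpha}e^{-\alpha x_0}\varphi(x_0)^2$. Since $\Phi^{(k)}\rightharpoonup 0$ in $\mathcal{H}$ gives boundedness in $H^1(-x_0-1,x_0+1)$ (the weight is bounded below on compacts) and hence, by Rellich, convergence to zero in $C([-x_0,x_0])$, the boundary corrections and the bounded potential terms on $[-x_0,x_0]$ are indeed $o(1)$, and your bound $\mathcal{Q}_{R_{j_k}}(\Phi^{(k)},\Phi^{(k)})\ge 1-\eta+o(1)$ follows, dispensing with the paper's case distinction entirely. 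The only cosmetic point is that the paper allows complex coefficients $\alpha_n$ while you take real $c_n$; since the spectral problem is real this is harmless, but it deserves a word.
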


\begin{proof}
Suppose for contradiction that some non-trivial $(\alpha_1, \dots, \alpha_N) \in \CC^N$ exists such that
\be\label{28}
\sum\limits_{n=1}^N \alpha_n \hat{\Phi}_n \equiv 0.
\ee
W.l.o.g. let $\sum\limits_{n=1}^N |\alpha_n|^2 = 1$. Using the subsequence $(R_{j_k})$ satisfying \eqref{14}, we define
\be\label{29}
\Phi^{(k)} := \sum\limits_{n=1}^N \alpha_n \Phi_n^{R_{j_k}} \quad (k \in \N),
\ee
whence \eqref{14} and \eqref{28} imply
\be\label{30}
\Phi^{(k)}  \underset{\scriptsize{k \to \infty}}{\rightharpoonup} 0 \text{ weakly in } \mathcal{H}.
\ee
Furthermore, using \eqref{29} and \eqref{6},
\be\label{31}
\langle \Phi^{(k)} , \Phi^{(k)}  \rangle_{\mathcal{H}} = \sum\limits_{n=1}^N |\alpha_n|^2 = 1.
\ee
Choose
\be\label{32}
\eta := \frac{1}{4} ( 1 - \hat{\lambda}_N),
\ee
which by \eqref{12}, \eqref{8}, and \eqref{3} is positive. Now choose $x_0$ according to Lemma \ref{Lem4}. Besides \eqref{24}, an analogous inequality also holds with integration over $(-\infty,-x_0)$ instead of $(x_0,\infty)$, possibly after further enlargening $x_0$.

We define
\bn
S(x) &:=& \left\{
\begin{array}{ll}
0            & ( |x| \le x_0) \\
\sin [ \frac{\pi}{2} (|x| - x_0) ]& (x_0 \le |x| \le x_0 + 1) \\
1            & (|x| \ge x_0 + 1) \end{array}  \right\} , \\[2ex]
C (x) &:=& \left\{ \begin{array}{ll}
1            & ( |x| \le x_0)\\
\cos [\frac{\pi}{2} (|x| - x_0) ]& (x_0 \le |x| \le x_0 + 1) \\
0            & (|x| \ge x_0 + 1) \end{array}  \right\} ~.
\en
Since $S \Phi^{(k)} \in \mathcal{H}$ vanishes on $[-x_0,x_0]$, \eqref{24} (and the analogous inequality over $(-\infty,-x_0)$) implies, for all $R \ge x_0$ and $k \in \N$,
\be\label{33}
\mathcal{Q}_R (S \Phi^{(k)}, S \Phi^{(k)}) \ge (1-\eta) \langle S \Phi^{(k)}, S \Phi^{(k)} \rangle_{\mathcal{H}}.
\ee
Furthermore, denoting $I_0 := [- x_0 - 1, - x_0 ] \cup [x_0, x_0 + 1]$,
\bn
S^2 + C^2 \equiv 1, \quad (S')^2 + (C')^2 = \frac{\pi^2}{4} \chi_{I_0} \text{ on } \R,
\en
and therefore, for all $R \ge x_0$ and $k \in \N$,
\begin{eqnarray}
\label{34}
\mathcal{Q}_R(S \Phi^{(k)}, S \Phi^{(k)}) + \mathcal{Q}_R(C \Phi^{(k)}, C \Phi^{(k)}) =
\mathcal{Q}_R   ( \Phi^{(k)},  \Phi^{(k)})  + \frac{\pi^2}{4} \int\limits_{I_0} | \Phi^{(k)} |^2 dx,
\end{eqnarray}
and
\begin{eqnarray}
\label{35}
&&\langle S \Phi^{(k)}, S \Phi^{(k)} \rangle_{\mathcal{H}} +  \langle C \Phi^{(k)}, C \Phi^{(k)} \rangle_{\mathcal{H}}
= \langle  \Phi^{(k)},  \Phi^{(k)} \rangle_{\mathcal{H}}  + \frac{\pi^2}{4} \int\limits_{I_0} | \Phi^{(k)} |^2 dx \ge 1,
\end{eqnarray}
using \eqref{31} in the last step.

By compact embedding, \eqref{30} implies $\Phi^{(k)} \to 0$ stongly in $L^2(-x_0-1,x_0+1)$, and hence
\be\label{36}
C \Phi^{(k)}  \to 0 \text{ strongly in } L^2(\RR).
\ee

{\bf Case I:} $\| (C \Phi^{(k_{\nu})} )' \|_{L^2 (\R)^2 }\ge \delta > 0$ along some subsequence.

Then, together with \eqref{36}, we obtain
\bn
\frac{ \mathcal{Q}_R (C \Phi^{(k_{\nu})}, C \Phi^{(k_{\nu})} ) }{\langle C \Phi^{(k_{\nu})},C \Phi^{(k_{\nu})} \rangle_{\mathcal{H}} }
 \underset{\scriptsize{\nu \to \infty}}{\rightarrow} 1, \text{ uniformly in } R,
\en
and therefore, for $\nu$ sufficiently large,
\bn
\mathcal{Q}_R  (C \Phi^{(k_{\nu})}, C \Phi^{(k_{\nu})} ) \ge (1-\eta) \langle C \Phi^{(k_{\nu})}, C \Phi^{(k_{\nu})} \rangle_{\mathcal{H}}
\en
for all $R \ge x_0$. Together with \eqref{33}, \eqref{34}, \eqref{35} this implies
\bn
\mathcal{Q}_R  ( \Phi^{(k_{\nu})},  \Phi^{(k_{\nu})} ) +  \frac{\pi^2}{4} \int\limits_{I_0} | \Phi^{(k_{\nu})} |^2 dx \ge 1- \eta
\en
and thus, using again that $\Phi^{(k_{\nu})} \to 0$ in $L^2 (- x_0 - 1, x_0+1)^2$,
\be\label{37}
\mathcal{Q}_R  ( \Phi^{(k_{\nu})},  \Phi^{(k_{\nu})} ) \ge 1-2\eta
\ee
for $\nu$ sufficiently large, uniformly in $R \in [x_0, \infty)$. On the other hand, by \eqref{29} and \eqref{6},
\begin{eqnarray}
\label{38}
\mathcal{Q}_{R_{j_{k_{\nu}}}}   ( \Phi^{(k_{\nu})},  \Phi^{(k_{\nu})} ) & = &
\sum\limits_{n,m = 1}^N  \alpha_n \overline{\alpha}_m \mathcal{Q}_{R_{j_{k_{\nu}}}}
( \Phi^{R_{j_{k_{\nu}}}}_n,  \Phi^{R_{j_{k_{\nu}}}}_m ) \nonumber\\
& = & \sum\limits_{n,m = 1}^N  \alpha_n \overline{\alpha}_m \lambda_n^{R_{j_{k_{\nu}}}} \delta_{nm} \le \lambda_N^{R_{j_{k_{\nu}}}},
\end{eqnarray}
which contradicts \eqref{37} due to \eqref{12} and \eqref{32}.
\bigskip

{\bf Case II:} $(C \Phi^{(k)})' \to 0$ in $L^2 (\R)^2$.

Then, using also \eqref{36}, we obtain
\bn
\mathcal{Q}_R (C \Phi^{(k)},C \Phi^{(k)}) \to 0, \quad \langle C \Phi^{(k)}, C \Phi^{(k)} \rangle_{\mathcal{H}} \to 0
\en
as $k \to \infty$, uniformly in $R \in [x_0, \infty)$. Therefore, using \eqref{34}, \eqref{35}, and the convergence $ \Phi^{(k)} \to 0$ in $L^2 (- x_0-1, x_0 + 1)$,
\bn
\mathcal{Q}_R (S \Phi^{(k)},S \Phi^{(k)}) \le \mathcal{Q}_R (  \Phi^{(k)},  \Phi^{(k)} )+ \eta,
\en
and
\bn
\langle S \Phi^{(k)}, S \Phi^{(k)}  \rangle_{\mathcal{H}} \ge 1 - \eta
\en
for $k$ sufficiently large, uniformly in $R$. Together with \eqref{33}, this gives
\be\label{39}
\mathcal{Q}_R ( \Phi^{(k)}, \Phi^{(k)}) \ge (1-\eta)^2- \eta \ge 1 - 3 \eta
\ee
for $k$ sufficiently large, uniformly in $R \in [x_0, \infty)$. On the other hand, as in the calculation \eqref{38}, we obtain
\bn
\mathcal{Q}_{R_{j_k}} ( \Phi^{(k)}, \Phi^{(k)}) \le \lambda_N^{R_{j_k}}
\en
which contradicts \eqref{39}, again due to \eqref{12} and \eqref{32}.
\end{proof}

\end{document}